\theoremstyle{plain}
\newtheorem{thm}{Theorem}[section]
\newtheorem{lemma}[thm]{Lemma}
\newtheorem{prop}[thm]{Proposition}
\newtheorem{cor}[thm]{Corollary}
\theoremstyle{definition}
\newtheorem{defn}[thm]{Definition}
\theoremstyle{remark}
\newtheorem{remark}[thm]{Remark}
\newcommand{\nc}{\newcommand}
\def\makeop#1{\expandafter\def\csname#1\endcsname
  {\mathop{\rm #1}\nolimits}\ignorespaces}
\def\makebb#1{\expandafter\def
  \csname bb#1\endcsname{{\mathbb{#1}}}\ignorespaces}
\def\makebf#1{\expandafter\def\csname bf#1\endcsname{{\bf
      #1}}\ignorespaces} 
\def\makegr#1{\expandafter\def
  \csname gr#1\endcsname{{\mathfrak{#1}}}\ignorespaces}
\def\makescr#1{\expandafter\def
  \csname scr#1\endcsname{{\EuScript{#1}}}\ignorespaces}
\def\makecal#1{\expandafter\def\csname cal#1\endcsname{{\mathcal
      #1}}\ignorespaces} 
\def\doLetters#1{#1A #1B #1C #1D #1E #1F #1G #1H #1I #1J #1K #1L #1M
                 #1N #1O #1P #1Q #1R #1S #1T #1U #1V #1W #1X #1Y #1Z}
\def\doletters#1{#1a #1b #1c #1d #1e #1f #1g #1h #1i #1j #1k #1l #1m
                 #1n #1o #1p #1q #1r #1s #1t #1u #1v #1w #1x #1y #1z}
     \def\qed{\qedmark\medbreak}%
\def\qedmark{{\enspace\vrule height 6pt width 5pt depth 1.5pt}}%
\def\Gm{{{\bbG}_{\rm m}}}   
\def\Spec{{\rm Spec}\,}
\def\Fpbar{\overline{\bbF}_p}
\def\Fp{{\bbF}_p}
\def\Fq{{\bbF}_q}
\def\Qp{{\bbQ}_p}
\def\Zp{{\bbZ}_p}
\def\Qbar{\overline{\bbQ}}
\newcommand{\Z}{\mathbb Z}
\newcommand{\Q}{\mathbb Q}
\newcommand{\R}{\mathbb R}
\newcommand{\C}{\mathbb C}
\newcommand{\A}{\mathbb A}    
\newcommand{\F}{\mathbb F}
\newcommand{\npr}{\noindent }
\newcommand{\<}{\langle}   
\renewcommand{\>}{\rangle} 
\newcommand{\isoto}{\stackrel{\sim}{\to}}
\nc{\embed}{\hookrightarrow}
\newcommand{\ch}{characteristic }
\newcommand{\ac}{algebraically closed }
\newcommand{\dieu}{Dieudonn\'{e} }
\nc{\ol}{\overline}
\nc{\wt}{\widetilde}
\nc{\opp}{\mathrm{opp}}
\def\ul{\underline}
\def\qisom{{\rm Isog}}
\def\GIsom{{\rm GIsom}}
\begin{document}
\renewcommand{\thefootnote}{\fnsymbol{footnote}}
\setcounter{footnote}{-1}
\numberwithin{equation}{section}


\title[The superspecial locus]{On arithmetic of the superspecial locus}
\author{Chia-Fu Yu}
\address{
Institute of Mathematics, Academia Sinica and NCTS, \\
Astronomy Mathematics Building \\
No. 1, Roosevelt Rd. Sec. 4 \\ 
Taipei, Taiwan, 10617}
\email{chiafu@math.sinica.edu.tw}

\date{\today}
\subjclass[2000]{14K10, 11E41, 11G18}
\keywords{Superspecial abelian variety; explicit reciprocity law;
  class number; trace formula}  

\begin{abstract}
We develop a method for describing the Galois action on the
superspecial locus of the Siegel moduli space in characteristic $p$. 
Using this description, we give a modern treatment
for the results of Ibukiyama and Katsura [Compos. Math., 1994] 
concerning 
the $\Fp$-rational points and the trace of a Hecke operator 
of Atkin-Lehner type. 
This also leads to analogues with level-$N$
structure. The trace of the Hecke operator 
can be reduced into one term (instead of
finitely many terms a priori) by the simple trace formula 
when $N$ is large.

\end{abstract} 

\maketitle


\section{Introduction}
\label{sec:01}

Throughout this paper $p$ denotes a rational prime number. 
An abelian variety $A$ over a field of
\ch $p$ is said to be {\it superspecial} if it is isomorphic to a
product of supersingular elliptic curves over an algebraic closure 
of the ground field. It is known that every supersingular elliptic curve
$E$ over any \ac field $k$ 
has a model defined over $\F_{p^2}$ (see Deuring \cite{deuring}); this
means that there exists an elliptic curve $E'$ over $\F_{p^2}$ and there
exists an isomorphism $E\simeq E'\otimes_{\F_{p^2}} k$ over $k$; 
the elliptic curve $E'$ is called a model of (the isomorphism class
of) $E$ over $\F_{p^2}$. 
For any $g>1$, there is only one isomorphism class of 
$g$-dimensional superspecial abelian varieties over $k$ 
(due to Deligne, Shioda and Ogus, also see \cite[Section 1.6,
p.~13]{li-oort}). Particularly every 
superspecial abelian variety of dimension greater than one over
$k$ has a model defined over $\Fp$. 
In \cite{ibukiyama-katsura} Ibukiyama and Katsura 
studied the fields of definition of superspecial 
{\it polarized} abelian
varieties. They showed that every superspecial principally polarized
abelian variety over $\Fpbar$ has a model defined over
$\F_{p^2}$. They also 
expressed the number of those which have a model defined over
$\Fp$ in terms of the class number and the type number of 
the quaternion unitary algebraic group in question; 
see Theorems~\ref{12} and \ref{13} for more details. 

Let $\calA_g$ denote the moduli space over $\Fp$ of $g$-dimensional
principally polarized abelian varieties. 
Let $\Lambda_g\subset \calA_g(\Fpbar)$ be the subset consisting of
superspecial points, called the superspecial locus of
$\calA_g\otimes \Fpbar$. 
This is a finite and closed subset that is stable
under the action of Galois group 
$\calG:=\Gal (\Fpbar/\Fp)$ (Corollary~\ref{42}).
The unique $\Fp$-model of $\Lambda_g$, called the superspecial locus
of $\calA_g$, is denoted by ${\bf \Lambda}_g$; 
one has $\Lambda_g={\bf \Lambda}_g(\Fpbar)$.


The goal of this paper is to develop a 
method for describing the Galois action on $\Lambda_g$.
One can regard this as a reciprocity law, which in some broad sense
describes the Galois action on a class space of 
adelic points in terms of Hecke translations. 
For CM fields, the Shimura-Taniyama reciprocity law describes explicitly 
the action of the Galois group $\Gal(\ol {K'}/K')$ on the 
spaces of abelian varieties of CM type $(K,\Phi)$, 
where $K'$ is the reflex field 
of the CM pair $(K,\Phi)$.
This is known as the main theorem of complex
multiplication \cite{shimura-taniyama}. 
For the field of rational numbers, the action
of the Galois group $\calG_\Q=\Gal(\Qbar/\Q)$ on the group of torsion
points of the multiplicative group $\Gm$ over $\Q$ gives rise to the
cyclotomic character $\omega:\calG_\Q\to \hat \Z^\times$, which factors
through the isomorphism $\omega:\calG_\Q^{\rm ab} \simeq \hat \Z^\times$. 
By the isomorphism $\R_{>0}\Q^\times \backslash \A^\times_\Q \simeq
\hat \Z^\times$ and composing with the inverse map $\omega^{-1}$ of
$\omega$, we get a map ${\rm rec_\Q}: \Q^\times \backslash \A_\Q^\times \to \calG_{\Q}^{\rm ab}$,
which is the Artin reciprocity map. The Artin reciprocity map
classifies all abelian extensions of $\Q$ and gives the explicit
description of its maximal abelian extension, known as 
the Kronecker-Weber theorem (cf. \cite[Part II]{lang:ant}).
    

Let $(A_0,\lambda_0)$ be a superspecial principally polarized abelian
variety over $\Fp$, considered as the base point in $\Lambda_g$
via the base change 
$(A_0,\lambda_0)\otimes \Fpbar=:(\ol A_0,\ol \lambda_0)$. 
To $(A_0,\lambda_0)$ we associate two group schemes
$G_1\subset G$ over $\Spec\Z$ as follows. For any commutative ring $R$,
the groups of their $R$-valued points are defined as
\begin{equation}
  \label{eq:0}
  \begin{split}
    & G(R):=\{x\in (\End(\ol A_0)\otimes R)^\times\, |\, x'x\in
        R^\times\, \},\\
    & G_1(R):=\{x\in (\End(\ol A_0)\otimes R)^\times\, |\, x'x=1\,
    \},
  \end{split}
\end{equation}
where the map $x\mapsto x'$ is the Rosati involution induced by the
polarization $\lambda_0$. For convenience, we often also write $G_1$
and $G$ for their generic fibers $G_{1,\Q}$ and $G_\Q$, respectively. 
As a well-known fact
(cf.~\cite{ibukiyama-katsura-oort}, \cite{ekedahl:ss}, 
\cite[Theorem 10.5]{yu:thesis},
 or \cite[Theorem 2.2]{yu:smf}), there is a natural
parametrization of $\Lambda_g$ by the following double coset spaces 
\begin{equation}
  \label{eq:11}
  \bfd: 
G(\Q)\backslash G(\A_f)/G(\hat \Z)= 
G_1(\Q)\backslash G_1(\A_f)/G_1(\hat \Z)
\simeq \Lambda_g
\end{equation}
for which the base point $(A_0,\lambda_0)$ corresponds to the identity class
$[1]$, where $\hat \Z$ is the profinite completion of $\Z$ and
$\A_f=\hat \Z\otimes \Q$ is the finite adele ring of $\Q$. 
As the abelian variety $A_0$ is defined over $\Fp$,
the Galois group $\calG$ acts naturally acts on the 
adelic group $G(\A_f)$. 
We denote the (arithmetic) Frobenius automorphism 
in $\calG$ by $\sigma_p$; one has $\sigma_p(x)=x^p$ for $x\in \Fpbar$.


\begin{thm}\label{11}\
\begin{itemize}
\item[(1)] The action of $\calG$ on $G(\A_f)$ is given by 
\begin{equation}
    \label{eq:12}
   \sigma_p (x_\ell)_\ell= (\pi_0 x_\ell \pi_0^{-1})_\ell, \quad
(x_\ell)_\ell \in G(\A_f), 
\end{equation}
where $\pi_0\in G(\Q)$ is the Frobenius endomorphism of
$A_0$ over $\Fp$.
\item[(2)] The natural map ${\bf \wt d} :G(\A_f)\to \Lambda_g$
  induced by 
  (\ref{eq:11}) is $\calG$-equivariant.   
\end{itemize}
\end{thm}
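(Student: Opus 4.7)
For part (1), the plan is to derive the conjugation formula (\ref{eq:12}) from the classical compatibility between the arithmetic Frobenius $\sigma_p$ and the relative Frobenius morphism on an $\Fp$-scheme. Because $A_0$ is defined over $\Fp$, the relative Frobenius $\pi_0\colon A_0\to A_0$ is an $\Fp$-morphism; it is therefore $\calG$-fixed and lies in $G(\Q)$, being an isogeny of degree $p^g$ with $\pi_0'\pi_0\in\Q^\times$. I would invoke the functorial identity
\[
\sigma_p(\phi)\circ\pi_0\;=\;\pi_0\circ\phi \qquad (\phi\in\End_{\Fpbar}(A_0)),
\]
coming from the factorization of the absolute Frobenius of $A_{0,\Fpbar}$ as $\pi_0$ composed with the base-change automorphism induced by $\sigma_p$. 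Inverting $\pi_0$ in $\End_{\Fpbar}(A_0)\otimes\Q$ then yields $\sigma_p(\phi)=\pi_0\phi\pi_0^{-1}$, and this identity will be preserved under tensoring with $\A_f$ and under the Rosati involution (since $\pi_0'\pi_0$ is a rational scalar), so it transfers verbatim to $G(\A_f)$.

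For part (2), I would unwind the construction of the bijection $\mathbf{d}$ in (\ref{eq:11}) and combine it with (1). A coset $[x]=G(\Q)xG(\hat\Z)$ corresponds to the isomorphism class of a ppav $(A_x,\lambda_x)$ obtained from $(A_0,\lambda_0)$ by modifying its quasi-isogeny class: for each $\ell\neq p$ the component $x_\ell$ prescribes a new $\Z_\ell$-lattice in $V_\ell(A_0):=T_\ell(A_0)\otimes\Q_\ell$, and at $p$ it similarly prescribes a new lattice in the rational Dieudonn\'e module. Applying $\sigma_p$ to the data defining $(A_x,\lambda_x)$ should twist each such identification by the Galois action on the corresponding Tate/Dieudonn\'e module of $A_0$, which is itself implemented by composition with $\pi_0$. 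The representative of $(A_x,\lambda_x)^{\sigma_p}$ in $G(\A_f)$ will then be $(\pi_0 x_\ell \pi_0^{-1})_\ell$, exactly matching $\sigma_p(x)$ as in (\ref{eq:12}).

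The main obstacle should be the bookkeeping in (2): matching the intrinsic Galois action on isomorphism classes in $\Lambda_g$ with the explicit adelic formula from (1) requires careful tracking of how $\sigma_p$ interacts with the rigidified data (quasi-isogenies together with lattice choices) underlying the parametrization. The cases $\ell\neq p$ and $\ell=p$ have to be treated separately: away from $p$ the Galois action on the Tate module is literally induced by Frobenius, while at $p$ the corresponding statement must be extracted from the Dieudonn\'e-module description of the superspecial locus. Once these identifications are made explicit, I expect the compatibility to become formal.
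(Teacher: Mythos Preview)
Your proposal is correct and follows essentially the same approach as the paper. For part~(1) the paper invokes exactly the Frobenius compatibility you describe (its Proposition~4.3), deducing $\sigma_p(f)=\pi_0 f\pi_0^{-1}$ on $\End_{\Fpbar}(A_0)$ and then passing to $G(\A_f)$; for part~(2) it likewise unwinds the parametrization, writing each $\phi_\ell\in G_1(\A_f)$ as $\phi\circ\alpha_\ell$ with $\phi\in\qisom_{\Fpbar}(\ul A,\ul A_0)$ and $\alpha_\ell\in\Isom_{\Fpbar}(\ul A_0(\ell),\ul A(\ell))$, so that applying $\sigma$ to this decomposition immediately yields the data for ${}^\sigma\ul A$ and the bookkeeping you worry about becomes a one-line functoriality check rather than a separate analysis of the $\ell\neq p$ and $\ell=p$ cases.
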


We now explain the main results of Ibukiyama and Katsura in
\cite{ibukiyama-katsura}. We can select the base point
$(A_0,\lambda_0)$ over $\Fp$ such that the Frobenius
endomorphism $\pi_0\in \End(A_0)$ 
satisfies $\pi_0^2=-p$. The existence of $(A_0,\lambda_0)$ is known
due to Deuring (cf.~\cite{ibukiyama-katsura}); 
this also follows from the Honda-Tate 
theory \cite{tate:ht}. Put $U:=G(\hat \Z)$ and
$U(\pi_0):=U\pi_0=\pi_0U$. 

\begin{thm} {\rm (}\cite[Theorem 1]{ibukiyama-katsura}{\rm )} \label{12}\  

\begin{itemize} 
  \item[(1)] Every member $(A,\lambda)\in \Lambda_g$ has a model
    defined over 
    $\F_{p^2}$.

  \item[(2)] A member $(A,\lambda)\in \Lambda_g$ admits 
    a model defined over $\Fp$ if and only if  
  \begin{equation}
      \label{eq:13}
      G(\Q)\cap x    U(\pi_0) x^{-1}\neq \emptyset,
  \end{equation}   
  where $x\in G(\A_f)$ is any representative of the class $[x]\in
  G(\Q)\backslash G(\A_f)/U$ corresponding to $(A,\lambda)$. 
\end{itemize}
\end{thm}

The second part of the work of Ibukiyama and Katsura concerns the 
trace of a Hecke operator of Atkin-Lehner type 
on the space of automorphic forms on the group $G$. 
Denote by $M_0(U)$ the vector space of all functions 
$f:G(\A_f)\to \C$ that satisfy $f(axu)=f(x)$ for all 
$a\in G(\Q)$ and $u\in U$.   
Let $\calH(G,U)$ denote the convolution algebra of bi-$U$-invariant
functions $h$ on $G(\A_f)$ with compact support, called the Hecke
algebra. The Hecke algebra $\calH(G,U)$ acts
naturally on the space $M_0(U)$ by the following rule:
\begin{equation}
  \label{eq:14}
 h*f(x)=\int_{G(\A_f)} h(y) f(xy) dy, \quad \text{for\ } 
 h\in \calH(G,U), \ f\in M_0(U),  
\end{equation}
where the Haar measure on $G(\A_f)$ is normalized with volume one on
$U$. 
Explicitly, if we write the double coset 
$UyU$, where $y$ is an element in $G(\A_f)$, into $\coprod_{i=1}^{n}
y_i U$, and let ${\bf 1}_{UyU}$ denote the \ch
function of $UyU$, then one has
\begin{equation}
  \label{eq:15}
  {\bf 1}_{UyU}* f(x)=\sum_{i=1}^n f(x y_i).
\end{equation}

Let $R(\pi_0)$ be the operator induced from the characteristic
function ${\bf 1}_{U(\pi_0)}$. 
Let $\calT(G)$ denote the set of $G(\Q)$-conjugacy classes of maximal
orders in the central simple algebra $\End^0(A_0\otimes \Fpbar)=
\End(A_0\otimes \Fpbar)\otimes \Q$ 
which are $G(\A_f)$-conjugate to the maximal order 
$\End(A_0\otimes \Fpbar)$. We can write 
\[ \calT(G)=G(\Q)\backslash G(\A_f)/\grN, \]
where $\grN$ is the open subgroup of $G(\A_f)$ that normalizes the ring
$\End(A_0\otimes \Fpbar)\otimes \hat \Z$. The cardinality $T$ of
$\calT(G)$ is called the type number of the group $G$, 
following Ibukiyama-Katsura \cite{ibukiyama-katsura}.
In the case $g=1$, the group $G$ is equal to the multiplicative
group of the quaternion $\Q$-algebra $B_{p,\infty}$ 
ramified exactly at $\{\infty,
p\}$, and this agrees with the usual definition of the type number,
namely the number of conjugacy classes of maximal orders in
$B_{p,\infty}$.  

\begin{thm}{\rm (}\cite[Theorem 2]{ibukiyama-katsura}{\rm )}\label{13} \
\begin{itemize}
\item[(1)] The number of members $(A,\lambda)$ in $\Lambda_g$ that
  have a model
  defined over $\Fp$ is equal to $\tr R(\pi_0)$. 
  \item[(2)] We have $\tr R(\pi_0)=2T-H$, where $H$ is
    the class number of $G$ (for the level group $U$).
\end{itemize}
\end{thm}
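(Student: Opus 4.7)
\smallskip

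\noindent \textbf{Proof plan.} The argument splits along the two assertions.

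For part (1), the plan is to compute $\tr R(\pi_0)$ directly on a natural basis of $M_0(U)$. Let $x_1,\dots,x_H\in G(\A_f)$ be representatives for the $H$ double cosets in $G(\Q)\backslash G(\A_f)/U$, and set $f_i=\mathbf{1}_{G(\Q)x_iU}$; these form a basis of $M_0(U)$. Since $\pi_0$ is a unit in $\End(A_0)\otimes\Z_\ell$ for every $\ell\neq p$, and since $\pi_0^2=-p$ makes $\pi_0$ a uniformizer-type element normalizing the maximal order at $p$, the element $\pi_0$ normalizes $U$, so $U(\pi_0)=U\pi_0=\pi_0U$ is a single $U$-coset on each side. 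Formula (\ref{eq:15}) therefore collapses to $R(\pi_0)f(x)=f(x\pi_0)$, and applied to $f_i$ this gives $R(\pi_0)f_i=\mathbf{1}_{G(\Q)x_i\pi_0^{-1}U}$, again one of the basis vectors. Hence $R(\pi_0)$ acts on $\{f_1,\dots,f_H\}$ as a permutation matrix, and its trace equals the number of indices $i$ for which $x_i\pi_0^{-1}\in G(\Q)x_iU$. Rewriting, this condition reads $G(\Q)\cap x_iU(\pi_0)x_i^{-1}\neq\emptyset$, which by Theorem~\ref{12}(2) is precisely the $\F_p$-model condition on the class corresponding to $x_i$. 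This yields part~(1).

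For part (2), the strategy is to compare $\Lambda_g=G(\Q)\backslash G(\A_f)/U$ with $\calT(G)=G(\Q)\backslash G(\A_f)/\grN$. The key structural fact I would establish is that $\grN=Z(\Q)\cdot U\cdot\langle\pi_0\rangle$ and that $\grN/(Z(\Q)\cdot U)\cong\Z/2$, generated by the class of $\pi_0$. At primes $\ell\neq p$ the algebra $\End^0(A_0)\otimes\Q_\ell$ is split and a direct calculation gives that the normalizer of its maximal order in $G(\Q_\ell)$ is $Z(\Q_\ell)\cdot U_\ell$; at $p$ the algebra has Brauer invariant $\tfrac{1}{2}$, and an analysis of reduced norms modulo squares shows that the normalizer of a maximal order modulo $Z(\Q_p)U_p$ is of order~$2$, generated by $\pi_0$ (with $\pi_0^2=-p\in Z(\Q_p)$). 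Triviality of $\Cl(\Q)$ then absorbs $Z(\A_f)$ into $Z(\Q)\cdot U$, yielding the claim.

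Consequently every fibre of the natural surjection $\Lambda_g\twoheadrightarrow\calT(G)$ over a class $[x]$ is the set $\{[x],[x\pi_0]\}$, of cardinality $1$ or $2$. Writing $T_1$ and $T_2$ for the number of $\calT$-classes whose fibre has size $1$ and $2$ respectively, one obtains $T=T_1+T_2$ and $H=T_1+2T_2$, so $T_1=2T-H$. A fibre collapses to a singleton precisely when $x\pi_0\in G(\Q)xU$, equivalently $G(\Q)\cap xU(\pi_0)x^{-1}\neq\emptyset$; by Theorem~\ref{12}(2) this is again the $\F_p$-model condition, and by part~(1) such classes are counted by $\tr R(\pi_0)$. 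Therefore $\tr R(\pi_0)=T_1=2T-H$. The main obstacle will be the local analysis of $\grN$ at $p$, where the normalizer of a maximal order inside the unitary group over the quaternionic algebra $\End^0(A_0)\otimes\Q_p$ must be identified explicitly as $Z(\Q_p)U_p\cdot\langle\pi_0\rangle$; once this is in hand, the remainder of part~(2) reduces to routine counting.
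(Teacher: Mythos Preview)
Your proof is correct and follows essentially the same route as the paper. For part~(2) in particular, the paper's Lemma~\ref{65} and Corollary~\ref{66} carry out exactly the local computation of $\grN$ you outline (phrasing the $p$-component as $D(\Q_p)G(\Z_p)$, which agrees with $Z(\Q_p)U_p\langle\pi_0\rangle$), and the fibre-counting argument with $T_1,T_2$ is identical to the paper's equation~(\ref{eq:62}).

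The one logical difference is in part~(1): you take Theorem~\ref{12}(2) as a black-box input from Ibukiyama--Katsura, whereas the paper's purpose in Section~\ref{sec:06} is to \emph{rederive} both Theorem~\ref{12} and Theorem~\ref{13}(1) from its own Theorem~\ref{11}. Concretely, the paper identifies $R(\pi_0)$ with the action of $\sigma_p^{-1}$ on $\Lambda_g$ via the reciprocity law (Proposition~\ref{62}), so $\tr R(\pi_0)=|\Lambda_g(\Fp)|$ directly, and then separately proves (Proposition~\ref{63}, Corollary~\ref{64}) that field of moduli equals field of definition for polarized abelian varieties over $\Fpbar$. Your argument is a perfectly valid proof of the stated theorem, but it bypasses the paper's main point, which is that Theorem~\ref{11} gives an independent and conceptually simpler route to these results.
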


Remark that the case $g=1$ of Theorems~\ref{12} and~\ref{13} is due to
Deuring, and the case $g>1$ is proved in \cite{ibukiyama-katsura}. 

As the main application of Theorem~\ref{11}, we give  
somehow simpler proofs of Theorems~\ref{12} and~\ref{13} (1). 
We also include an exposition of the proof of Theorem~\ref{13} (2)  
but in the language of adeles. 
As a byproduct, we obtain the following result, 
which is implicit in the proof of \cite[Theorem 2]{ibukiyama-katsura}.
By Theorem~\ref{11}, the action of the Galois group $\calG$
on $\Lambda_g$ factors through the quotient group $\Gal(\F_{p^2}/\Fp)$.
Let ${\bf \Lambda}_g^0$ be the set of closed points of the
finite $\F_p$-scheme ${\bf \Lambda}_g$; this is the set of Galois orbits of
$\Lambda_g$, and can be also identified with the set of connected
components of ${\bf \Lambda}_g$.

\begin{thm}[Theorem~\ref{67}]\label{14}
  The composition $\Lambda_g\simeq G(\Q)\backslash
  G(\A_f)/U \stackrel{{\rm pr}}{\to} \calT(G)$, where ${\rm pr}$ is the
  natural projection,  induces a bijection between the set of
  $\Gal(\F_{p^2}/\Fp)$-orbits of $\Lambda_g$ 
  and the set $\calT(G)$ of $G$-types. In other words, there is a
  natural bijection between 
  the set ${\bf \Lambda}_g^0$ of closed points of ${\bf \Lambda}_g$ 
  and the set $\calT(G)$.    
\end{thm}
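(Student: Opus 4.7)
The plan is to combine Theorem~\ref{11} with a local analysis of the normalizer $\grN$. By Theorem~\ref{11}(2) the $\calG$-action on $\Lambda_g\simeq G(\Q)\backslash G(\A_f)/U$ is given by $\sigma_p\cdot[x]=[\pi_0 x\pi_0^{-1}]=[x\pi_0^{-1}]$, after absorbing the left $\pi_0\in G(\Q)$. Since $\pi_0^2=-p$ lies in the center $Z(\Q)\subset G(\Q)$ (with $Z$ the center of $G$), this action has order dividing~$2$, so the $\calG$-action on $\Lambda_g$ factors through $\Gal(\F_{p^2}/\Fp)$. Since $\Lambda_g$ is Galois-invariant, viewing it as a finite scheme over $\Fp$ its set $\Lambda_g^0$ of closed points is exactly the set of $\langle\sigma_p\rangle$-orbits, and the task reduces to identifying these orbits with $\calT(G)=G(\Q)\backslash G(\A_f)/\grN$.

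Next I would check that $\pi_0\in\grN$. The natural action of $\sigma_p$ on $\End^0(A_0\otimes\Fpbar)$ is a ring automorphism which, by Theorem~\ref{11}(1), agrees on the unit group with conjugation by $\pi_0$; by $\Q$-linearity the same formula $\sigma_p(\alpha)=\pi_0\alpha\pi_0^{-1}$ holds on all of $\End^0(A_0\otimes\Fpbar)$. Since $\sigma_p$ preserves the integral subring $\O=\End(A_0\otimes\Fpbar)\otimes\hat\Z$, this gives $\pi_0\O\pi_0^{-1}=\O$, i.e., $\pi_0\in\grN$. Consequently the projection ${\rm pr}:G(\Q)\backslash G(\A_f)/U\to\calT(G)$ is constant on $\langle\sigma_p\rangle$-orbits and descends to a surjection from orbits onto $\calT(G)$.

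For injectivity I would analyze $\grN$ one place at a time. At each $\ell\neq p$, $\End^0(A_0)_\ell\simeq M_{2g}(\Q_\ell)$ is split, and one checks that the normalizer of the maximal order inside $G(\Q_\ell)$ equals $Z(\Q_\ell)\cdot U_\ell$. At $\ell=p$, $\End^0(A_0)_p\simeq M_g(D)$ for $D$ the quaternion division algebra over $\Qp$; the two-sided ideals of the maximal order $M_g(\O_D)$ are powers of a single generator, which $\pi_0$ represents (since $\pi_0^2=-p$), so $\grN_p=Z(\Qp)\cdot U_p\cdot\pi_0^\Z$. Globally $\grN=Z(\A_f)\cdot U\cdot\pi_0^\Z$, and class number one for $\Q$ gives $Z(\A_f)\subset Z(\Q)\cdot Z(\hat\Z)\subset G(\Q)\cdot U$, hence $\grN\subset G(\Q)\cdot U\cdot\pi_0^\Z$. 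Writing any $n\in\grN$ in the form $u\,z\,\pi_0^\epsilon$ with $u\in U$, $z\in Z(\A_f)$, and $\epsilon\in\{0,1\}$, a short unwinding (using that $\pi_0$ normalizes $U$ and $\pi_0^2\in Z(\Q)$) gives $[xn]\in\{[x],[x\pi_0]\}$, so each fiber of ${\rm pr}$ coincides with a $\langle\sigma_p\rangle$-orbit. The main obstacle is precisely this local analysis at $p$, resting on the two-sided ideal theory of a maximal order in $M_g(D)$; once that is in hand, the rest is adelic bookkeeping using class number one for $\Z$.
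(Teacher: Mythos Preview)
Your proposal is correct and follows essentially the same route as the paper. The paper likewise reduces to computing the local normalizers $\grN_\ell$ (its Lemma~\ref{65}), introduces the intermediate group $\wt U=\bigcup_{m\in\Z}U\pi_0^m$, shows $G(\Q)\backslash G(\A_f)/\wt U\simeq\calT(G)$ via the class-number-one absorption of $Z(\A_f^p)$ (its Corollary~\ref{66}), and then observes that passing from $U$ to $\wt U$ identifies each fiber with $\{[x],[x\pi_0]\}$; your argument is the same in all essentials. The one minor methodological difference is that at $p$ you invoke the two-sided ideal structure of $M_g(\O_D)$ to pin down $\grN_p$, whereas the paper sketches an Iwasawa-decomposition computation (and remarks that Cartan decomposition also works); both are standard and lead to the same description $\grN_p=D(\Q_p)G(\Z_p)=\pi_0^{\Z}G(\Z_p)$.
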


In some sense we have the following equality in the ``arithmetic side''
\begin{equation}
  \label{eq:16}
   \tr R(\pi_0)=2T-H
\end{equation}
and its  mirror in the geometric side
\begin{equation}
  \label{eq:17}
  |{\bf \Lambda}_g(\Fp)|= 2 |{\bf \Lambda}_g^0| - |\Lambda_g|
\end{equation}
where ${\bf \Lambda}_g(\Fp)\subset \Lambda_g$ is the subset of
$\Fp$-rational points. Moreover, this is the term-by-term equality;
see Theorems~\ref{13} and~\ref{14}.

The explicit computation of the class number $H$ 
is extremely difficult; see
\cite{hashimoto-ibukiyama:classnumber} for the case $g=2$. 
However, if we add a prime-to-$p$ level structure to the
superspecial locus and form a cover $\Lambda_{g,1,N}$ of
$\Lambda_g$, then one can compute the cardinality $|\Lambda_{g,1,N}|$ 
 rather easily using the mass formula: 
\begin{equation}
  \label{eq:18}
  |\Lambda_{g,1,N}|=|\GSp_{2g}(\Z/N\Z)| \frac{(-1)^{g(g+1)/2}}{2^g}
\left \{ \prod_{k=1}^g \zeta(1-2k) 
  \right \}\cdot \prod_{k=1}^{g}\left\{(p^k+(-1)^k\right \},
\end{equation}
where $\zeta(s)$ is the Riemann zeta function. 
See Ekedahl \cite[p.159]{ekedahl:ss} and Hashimoto-Ibukiyama
\cite[Proposition 9]{hashimoto-ibukiyama:classnumber}, 
also cf.~\cite[Section 3]{yu:ss_siegel}. This leads us to examine
whether the 
  analogous statements for (\ref{eq:16}) and (\ref{eq:17}) can be
  extended to the objects with prime-to-$p$ level
  structures, and whether all the terms can be computed
  explicitly. This is the content of the second part of this paper.  

We first describe the Galois action on the superspecial locus with a 
(usual) prime-to-$p$ level structure. Let $N$ be a prime-to-$p$ positive
integer. Let $\calA_{g,1,N}$ denote the moduli space over $\Fp$ 
of $g$-dimensional principally polarized abelian varieties
with a (full) symplectic level-$N$ structure; see Section~\ref{sec:74} for
details. Let  
$\wt \calA_{g}^{(p)}:=(\calA_{g,1,N})_{p \nmid N}$ be the tower 
of Siegel modular varieties with
prime-to-$p$ level structures. Let 
$\wt \Lambda_g\subset \wt \calA_{g}^{(p)}\otimes \Fpbar$ 
be the superspecial
locus, which is the tower of superspecial loci $\Lambda_{g,1,N}\subset
\calA_{g,1,N}\otimes \Fpbar$ for all prime-to-$p$ positive integers
$N$; $\wt \Lambda_g$ is a profinite set together with a
$\calG$-action. Let $T^{(p)}(A_0):=\prod_{\ell\neq p} T_\ell(A_0)$ be the
prime-to-$p$ Tate module of $A_0$; it is equipped with an action of
$\calG$ so that we have a Galois representation
\[ \rho:\calG\to G(\hat \Z^{(p)})\subset G(\A^p_f) ,\]
where $\hat \Z^{(p)}:=\prod_{\ell\neq p} \Z_\ell$ and $\A_f^p:= \hat
\Z^{(p)}\otimes \Q$\ is the prime-to-$p$ finite adele ring of $\Q$.
We fix a point 
$(A_0,\lambda_0,\wt \alpha_0)\in \wt \Lambda_g$ over 
$(A_0,\lambda_0)$, where 
$\wt \alpha_0:(\hat \Z^{(p)})^{2g}\simeq T^{(p)}(A_0)$ 
is a trivialization which preserves the pairings up to an element in
$(\hat \Z^{(p)})^\times$. The trivialization $\wt \alpha_0$ induces 
an isomorphism 
\[  i_0:G(\A^p_f)\simeq \GSp_{2g}(\A^p_f), \]
and a Galois representation 
\[ \rho_0=i_0\circ \rho:\calG\to \GSp_{2g}(\A^p_f). \]
Let $\calG$ act on $\GSp_{2g}(\A^p_f)$ by the action $\rho_0$:
\begin{equation}
  \label{eq:19}
  \sigma\cdot x=\rho_0(\sigma) x, \quad \forall\, \sigma\in \calG,\,
  x\in \GSp_{2g}(\A^p_f).
\end{equation}

\begin{prop}\label{15}
There is an isomorphism (depending on the choice of $\wt \alpha_0$)
\[ \bfb^p_0: \wt \Lambda_g \simeq i_0(G(\Z_{(p)})) \backslash
\GSp_{2g}(\A^p_f)\]
which is compatible with the right $\GSp_{2g}(\A^p_f)$-action and the
(left) $\calG$-action.    
\end{prop}

The action of the Galois group $\calG$ on the finite subset
$\Lambda_{g,1,N}$ somehow contains a twist which comes from the
trivialization between $A_0[N]$ and $(\Z/N\Z)^{2g}$. This forces the
subset ${\bf \Lambda}_{g,1,N}(\Fp)$ of $\Fp$-rational points 
to be {\it empty} when $N$
is large. As a result, the analogous result 
for $\Lambda_{g,1,N}$ as in (\ref{eq:17}) (the geometric side) is false.
However, the
formulation of (\ref{eq:16}) (the arithmetic side) extends well 
without any modification. 
To correct the identity (\ref{eq:17}) for higher level, 
we introduce a new level-$N$ structure for 
members in $\Lambda_g$ which relies on the base 
point $(A_0,\lambda_0)$. 
This yields a cover $\Lambda^*_{g,N}$
of $\Lambda_g$ for which the Galois action becomes well-behaved.
As a result, Theorems~\ref{12}, \ref{13} and \ref{14} 
can be generalized without any difficulty from the present approach; 
see Theorem~\ref{75}. Note that 
the sets $\Lambda^*_{g,N}$ and $\Lambda_{g,1,N}$ are isomorphic as
Hecke sets but they have different Galois actions. 

We use the simple trace formula to compute the
trace of the Hecke operator $R(\pi_0)$. 
Our final result
says that when $N$ is sufficiently large, $\tr R(\pi_0)$ is reduced to 
the product of the mass of the centralizer $G_{\pi_0}$ of $\pi_0$ and a
standard orbital integral; see Theorem~\ref{910} for details.
We remark that one can calculate the mass of $G_{\pi_0}$
explicitly using the methods of G.~Prasad \cite{prasad:s-volume} 
or of Shimura \cite{shimura:euler}, and that 
the orbital integral is of purely local nature. Note that we had an
explicit formula for the class number $H_N=|\Lambda_{g,1,N}|$
(\ref{eq:18}) due to Ekedahl and others; knowing one of $\tr R(\pi_0)$
and the type number would know the other.

The method of the present paper works for more general Shimura
varieties. One can apply it to describe the Galois
action on the {\it minimal basic locus} (also called the superspecial
locus in \cite{viehmann-wedhorn:np})
in the reduction mod $p$  
of a PEL-type Shimura variety. 
See \cite{kottwitz:jams92} and 
\cite{rapoport-zink} for precise definitions of these moduli spaces 
and basic abelian varieties. 
We call a basic polarized abelian variety $(A,\lambda,\iota)$ with
an $O_B$-action (for an order $O_B$ in a semi-simple $\Q$-algebra $B$)
{\it minimal} if $\End_{O_B}(A)\otimes_{\Z_p}$ is a maximal order.
These are natural generalizations of superspecial abelian varieties.
The existence of basic points is known due to many people in
many cases (see \cite{tate:ht}, \cite{yu:reduction},
\cite{fargues:thesis}, \cite{mantovan:thesis}, \cite{yu:c},
\cite{viehmann-wedhorn:np}). The existence of minimal basic points
can be deduced using the method in \cite[Theorem 1.3]{yu:mo}. 
The parametrization of the minimal basic locus by double coset spaces
(similar to (\ref{eq:11})) is also available; 
see \cite[Theorems 2.2 and 4.6]{yu:smf}. 
For describing the Galois action, 
our study indicates that a good base point in the minimal basic locus
plays a crucial role in the general theory.  

There has been a theory of modular forms mod $p$ initiated by 
Serre \cite{serre:quat} and more generally on algebraic modular forms 
developed by Gross \cite{gross:amf} where the superspecial locus plays
an important role. Under the framework of Gross'
theory, Ghitza \cite{ghitza:thesis} 
proved the Jacquet-Langlands correspondence (JLC) modulo $p$
between modular forms on $\GSp_{2g}$ and algebraic modular forms on
its compact inner form ``twisted at $p$ and $\infty$''. 
He obtained this by restricting modular forms mod $p$ to the Siegel 
superspecial locus, and used the meaning of modular forms as global
sections of an automorphic bundle. 
The result of Ghitza has been generalized by 
Reduzzi \cite{reduzzi:2011} 
to the Shimura varieties attached to unitary groups $GU(r,s)$
associated to imaginary quadratic fields where the prime $p$ is inert.  
The description of $\Fp$-structure of the superspecial locus of this
paper should provide finer information to the theory of algebraic 
modular forms. For example, the Frobenius map is closely related to 
an Atkin-Lehner involution. One can
also consider the refined JLC modulo $p$ with respect to the decomposition
of automorphic forms by the Galois action.   


    



The paper is organized as follows.  
Section~2 collects elementary properties of schemes transformed by
Galois groups and recalls Weil's descent theorem for varieties. 
In Section~3 we study the field of
definition for the superspecial locus as well as NP and EO strata. 
Proof of Theorem~\ref{11} is given in Section~4. In Section~5 we show
that results of Ibukiyama-Katsura mentioned above 
follow from Theorem~\ref{11}. 
In Section~\ref{sec:07}, we treat the situation 
with a prime-to-$p$ level structure and generalize
Theorems~\ref{12},~\ref{13} and $\ref{14}$ to higher level structures.
Similar results for
the {\it non-principal genus} case 
are also included. 
We abstract the properties of computing $\tr R(\pi_0)$ and work on
the trace formula in a slightly more general content. As a result, 
we reduce the calculation of trace of $R(\pi)$ to certain more
manageable terms when the level group is small. 


\section{Preliminaries}
\label{sec:02}

In this section we include elementary properties about schemes
transformed by Galois groups and Galois descent.
This is for the reader's convenience. 
The reader who is familiar with them may skip this
section.  

\subsection{}
\label{sec:21}
Let $f:X\to S$ be a morphism of schemes, and let $\tau:T\to S$ be a base
morphism. Write $X_T:=X\times_S T$ for the fiber product, and hence we
have the cartesian diagram
\begin{equation}
  \label{eq:21}
  \begin{CD}
  X_T  @>{\tau_X}>>  X \\
  @VV{f_T}V  @VV{f} V \\
  T  @>{\tau}>>  S. \\
\end{CD}  
\end{equation}


Let $T'$ be a $T$-scheme, which also regarded as an $S$-scheme via
$\tau$. If $t'\in X_T(T')$, then $\tau_X \circ t\in X_S(T')$. By
the functorial property of the fiber product, we get a canonical
isomorphism 
\begin{equation}
  \label{eq:22}
  \tau_X: X_T(T') \isoto X_S(T').  
\end{equation}


\subsection{}
\label{sec:22}

Let $f:X\to S$ and $\tau:T\to S$ be as above. 
Regarding $X$ as a contravariant functor, one has a map
$X(S)\stackrel{\tau^*}{\longrightarrow} X(T)$. Composing with the
canonical isomorphism (\ref{eq:22}), we get a map, 
\begin{equation}
  \label{eq:23}
  \tau^* :X(S){\longrightarrow} X_T(T).
\end{equation}
 

Suppose that $S=\Spec A$ is affine. For any $\sigma\in \Aut(A)$, the 
group of ring automorphisms of $A$, we have the cartesian diagram
\begin{equation}
  \label{eq:24}
    \begin{CD}
     {}^{\sigma}\! X @>{(\sigma^*)_X}>> X \\ 
     @VV{^{\sigma}\! f}V @VV{f}V \\ 
     S @>{\sigma^*}>> S. \\  
  \end{CD}
\end{equation}
where $\sigma^*:S\to S$ denotes the induced isomorphism 
and ${}^\sigma\! X:=X \times_{S,\sigma^*} S$. 
Note that as schemes, there is a natural
morphism from ${}^\sigma\! X$ to $X$ only. The naive Galois action on the
{\it solutions} of $X$ gives a mapping, through (\ref{eq:23}),
\begin{equation}
  \label{eq:25}
  \sigma_*: X(A) \longrightarrow {}^\sigma\! X(A). 
\end{equation}
However, this map does not arise from a morphism of schemes in general.
 
For any two elements $\sigma_1, \sigma_2\in \Aut(A)$, 
one easily sees the relations:
\begin{equation}
  \label{eq:27}
  \sigma_2^*\circ \sigma_1^*=(\sigma_1 \sigma_2)^*,\quad 
^{\sigma_1}(^{\sigma_2}\! X)=\,  ^{\sigma_1 \sigma_2}\! X,
\quad\text{and \ }
{(\sigma^*_2)_X}\circ (\sigma^*_1)_{(^{\sigma_2}\! X)}= (\sigma_1
\sigma_2)^*_X,
\end{equation}
and  
\begin{equation}
  \label{eq:28}
  \begin{CD}
  \sigma_{1 *}
\sigma_{2 *}=(\sigma_1 \sigma_2)_*: X(A) @>{\sigma_{2 *}}>>
^{\sigma_2}\! X(A) @>{\sigma_{1 *}} >> 
  ^{\sigma_1 \sigma_2}\! X(A). \\
  \end{CD}
\end{equation}
We remark that these relations (\ref{eq:27}) and (\ref{eq:28}) 
still hold if
we replace the automorphism group $\Aut(A)$ by the monoid $\End (A)$
of ring endomorphisms of $A$. 

\subsection{}
\label{sec:23} 
Let $f_X:X\to S$ and $f_Y:Y\to S$ be two schemes over
$S$. Let $\tau:T\to S$ be a morphism of schemes. If $f:X\to Y$ is a
morphism of schemes over $S$, then we denote by $f^\tau: X_T\to Y_T$
the induced morphism of schemes over $T$. So we have the following
cartesian diagram
\begin{equation}
  \label{eq:213}
  \begin{CD}
  X_T @>{\tau_X}>> X \\
  @VV{f^\tau}V @VVfV \\
  Y_T @>{\tau_Y}>> Y.    
  \end{CD}
\end{equation}
If $\tau':T'\to T$ be a $T$-scheme, then we have the relations
\begin{equation}
  \label{eq:215}
  (f^\tau)^{\tau'}=f^{\tau \tau'},\quad \tau_X\circ \tau'_{X_T}=(\tau
  \tau')_X, \quad \text{and\ \ } \tau_Y\circ \tau'_{Y_T}=(\tau \tau')_Y. \\
\end{equation}
Following from functorial properties, we have the
following commutative diagram 
(cf.~(\ref{eq:23})) of sets 
\begin{equation}
  \label{eq:216}
  \begin{CD}
  X(S) @>{\tau^*}>> X_T(T) \\
  @VV{f}V @VV{f^\tau}V \\
  Y(S) @>{\tau^*}>> Y_T(T). \\
  \end{CD}
\end{equation}
In general this is not induced from morphisms of schemes (under $T=S$).
However, in some special situation we do have such an analogue; see
(\ref{eq:42}).

Suppose that $S=\Spec A$ and $T=\Spec B$ are affine. Let
$\Aut_{A}(B)$ be the group of $A$-automorphisms of $B$. For each
element $\sigma\in \Aut_A(B)$ and $f\in \Hom_B(X\otimes_A B,
Y\otimes_A B)$, the action of $\sigma$ on $f$ is
defined to be $\sigma(f):=\,^\sigma\! f$ (see (\ref{eq:24})) in the
cartesian diagram:
\begin{equation}
  \label{eq:220}
  \begin{CD}
  X_T @>{\sigma^*_X}>> X_T \\
  @VV{\sigma(f)}V @VVfV \\
  Y_T @>{\sigma^*_Y}>> Y_T.    
  \end{CD}
\end{equation} 
Then
$\sigma_1(\sigma_2(f))=(\sigma_1\sigma_2)(f)$, for
$\sigma_1,\sigma_2\in \Aut_A(B)$.  
That is, the group $\Aut_A(B)$ acts on the set $\Hom_B(X\otimes_A B,
Y\otimes_A B)$ from the left.

\subsection{Galois descent}
\label{sec:24}
We recall Weil's descent theorem for varieties.  
By a variety over a field $k$ we mean 
a scheme of finite
type of $k$ that is geometrically irreducible. Let $k/k_0$ be a field
extension; we say a variety $V$ over $k$ is defined over $k_0$ if there
exists a variety $V_0$ over $k_0$ and there exists an isomorphism $f:
V_0\otimes_{k_0} k \simeq V$ of varieties over $k$. In this case, the
pair $(V_0,f)$ is called a model of $V$ over $k_0$.
Let $k$ be a finite separable extension of a field $k_0$, and let $V$
be a variety over $k$. Let $\bar k_0$ be the algebraic closure of
$k_0$, and let $\scrI:=\Hom_{k_0}(k,\bar k_0)$ be the set of field
embeddings of $k$ into $\bar k_0$ over $k_0$. The Galois group
$\calG_{k_0}:=\Gal(\bar k_0/k_0)$ acts naturally on the finite set
$\scrI$ from the left: For $\sigma\in \scrI$ and $\omega\in \calG_{k_0}$,
set $\omega \sigma=\omega\circ \sigma$. For each element $\sigma\in
\scrI$, we write $^\sigma\! V$ for the variety $V\otimes_{k,\sigma}
\bar k_0$ over $\bar k_0$. Suppose that there is a variety $V_0$ over
$k_0$ and there is an isomorphism $f: V_0\otimes_{k_0} k \simeq V$ of
varieties over $k$. For each $\sigma \in \scrI$, we have an isomorphism
$^\sigma\! f: \ol V_0:=V_0\otimes_{k_0} \bar k_0\to\, {}^\sigma\! V$ over
$\bar k_0$. Then, for $\sigma, \tau\in \scrI$, we have an isomorphism
\[ f_{\tau ,\, \sigma}:=\, ^\tau\! f \circ (^\sigma\! f)^{-1}:\,
^\sigma\! V\isoto \, ^\tau\! V\] 
of varieties over $\bar k_0$. It is easy to check that the morphisms
$f_{\tau,\, \sigma}$ satisfy the following conditions:

(i) $f_{\tau, \rho}=f_{\tau,\sigma} \circ f_{\sigma,\rho}$ for all 
$\tau,\sigma,\rho\in \scrI$,

(ii) $f_{\omega \tau, \omega \sigma}=\omega(f_{\tau, \sigma})$ for all
$\tau, \sigma\in \scrI$ and $\omega\in \calG_{k_0}$.

Conversely, Weil showed that these necessary conditions are also
sufficient for $V$ over $k$ to have a model $(V_0,f)$ over $k_0$, 
provided that $V$ is quasi-projective over $k$.  

\begin{thm}{\rm (Weil} \cite[Theorem 3]{weil:fod}{\rm )}\label{31}
  Notations as above. Assume that $V$ is quasi-projective over $k$,
  and that for each pair of elements $\tau, \sigma\in \scrI$, there exists
  an  
  isomorphism $f_{\tau,\sigma}: \, ^\sigma\!
   V\isoto \,^\tau\! V$ such that the conditions 
  {\rm (i)} and {\rm (ii)} are
  satisfied. Then there exists a model $(V_0,f)$ of $V$ over $k_0$,
  unique up 
  to an isomorphism over $k_0$,  such that
  $f_{\tau, \sigma}= \, ^\tau\! f \circ (^\sigma\! f)^{-1}$, for all
  $\tau,\sigma\in \scrI$.  

  Moreover, if $V$ is quasi-projective (resp. affine), then the
  variety $V_0$ is quasi-projective (resp. affine). 
\end{thm}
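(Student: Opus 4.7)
The plan is to recast the hypothesis as effective Galois descent data and then to apply the standard effectivity theorem for quasi-projective schemes. First, I would reduce to a finite Galois extension: since $V$ and the finitely many isomorphisms $f_{\tau,\sigma}$ are of finite type, there is a finite Galois extension $K/k_0$ inside $\bar k_0$ containing $k$ such that $V$, the $f_{\tau,\sigma}$ and each ${}^\sigma V$ are already defined over $K$. Put $G:=\Gal(K/k_0)$ and $H:=\Gal(K/k)$, so that $\scrI$ identifies with the coset space $G/H$ via $\omega\mapsto \omega|_k$, and the $\calG_{k_0}$-equivariance (ii) factors through $G$.

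Next I would convert the datum $\{f_{\tau,\sigma}\}$ into a semi-linear $G$-action on $V_K:=V\otimes_k K$. Let $\iota\in\scrI$ denote the inclusion $k\hookrightarrow K$. For each $\omega\in G$, using the canonical identification ${}^{\omega\iota}V\simeq V_K\otimes_{K,\omega} K$, the isomorphism $f_{\omega\iota,\iota}:V_K\to {}^{\omega\iota}V$ produces an $\omega$-semi-linear automorphism $\psi_\omega:V_K\to V_K$. Condition (i) yields the cocycle relation $\psi_{\omega_1\omega_2}=\psi_{\omega_1}\circ\psi_{\omega_2}$ (so $\{\psi_\omega\}$ is a descent datum in the usual sense), while condition (ii) ensures $\psi_\omega$ depends only on the coset $\omega H$, making the construction well-posed.

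The main effective step is then: given a quasi-projective $V_K$ with a semi-linear $G$-action, produce a $G$-equivariant projective embedding. Taking a very ample line bundle $L$ on $V_K$ and replacing it by $L':=\bigotimes_{\omega\in G}\psi_\omega^* L$ gives a very ample, $G$-linearized bundle; the induced closed immersion $V_K\hookrightarrow\bbP(H^0(V_K,L'))$ is $G$-equivariant. Since $H^0(V_K,L')$ is a finite-dimensional $K$-vector space with semi-linear $G$-action, it descends to a $k_0$-vector space by the classical (Hilbert 90 type) Galois descent for vector spaces, hence the ambient projective space descends to $\bbP^N_{k_0}$. The image of $V_K$ is cut out by a $G$-stable homogeneous ideal and therefore descends to a quasi-projective closed subscheme $V_0\subset\bbP^N_{k_0}$. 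Unwinding the construction furnishes the required isomorphism $f:V_0\otimes_{k_0}k\simeq V$ with ${}^\tau\! f\circ({}^\sigma\! f)^{-1}=f_{\tau,\sigma}$. The affine case proceeds identically with coordinate rings: $V_0=\Spec A^G$, and finite generation of $A^G$ over $k_0$ together with the isomorphism $A^G\otimes_{k_0}K\simeq A$ is immediate from Galois descent for finitely generated algebras.

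Uniqueness is formal: any two such models $(V_0,f)$ and $(V_0',f')$ produce an isomorphism $f'\circ f^{-1}:V_0\otimes_{k_0}k\to V_0'\otimes_{k_0}k$ intertwining the descent data, which therefore descends to a $k_0$-isomorphism by fully faithfulness of base change from $k_0$ to $K$ on schemes with $G$-action. The main obstacle is precisely the effectivity step, namely the construction of a $G$-linearized very ample line bundle and hence of the $G$-equivariant embedding into projective space; this is exactly where the quasi-projective hypothesis is essential, since on an arbitrary separated finite-type scheme a semi-linear Galois descent datum need not be effective.
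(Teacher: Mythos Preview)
The paper does not supply its own proof of this statement; Theorem~\ref{31} is recorded as a citation of Weil \cite[Theorem 3]{weil:fod}, and Section~\ref{sec:03} merely reviews Weil's results for later use. So there is no proof in the paper to compare against.

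Your argument is the standard modern recasting of Weil's theorem as effective Galois descent for quasi-projective schemes, and the overall strategy is sound. One small inaccuracy in the bookkeeping: you attribute the cocycle relation for the $\psi_\omega$ to condition (i) alone and relegate (ii) to a well-posedness check, but in fact both are needed for the cocycle. Writing $\phi_\omega:=f_{\omega\iota,\iota}$, one has
\[
\phi_{\omega_1\omega_2}=f_{\omega_1\omega_2\iota,\iota}
=f_{\omega_1\omega_2\iota,\omega_1\iota}\circ f_{\omega_1\iota,\iota}
=\omega_1(f_{\omega_2\iota,\iota})\circ \phi_{\omega_1},
\]
where the second equality uses (i) and the third uses (ii); the paper makes exactly this remark immediately after the statement of Theorem~\ref{31} in the Galois case. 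Conversely, the compatibility with the canonical $H$-action on $V_K$ (that $\phi_\omega$ is the tautological isomorphism for $\omega\in H$) already follows from $f_{\iota,\iota}=\id$, which is a consequence of (i). This is a labeling issue rather than a gap; the effectivity step via a $G$-linearized very ample bundle is precisely where quasi-projectivity enters, as you correctly identify.
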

If the extension $k/k_0$ is Galois, letting
$a_\sigma:=f_{\sigma,1}:V\to \, ^\sigma\! V$, then
the conditions (i) and (ii) are equivalent to the 1-cocycle
condition $a_{\tau \sigma}=\tau(a_\sigma)\circ a_\tau$ for all $\tau,
\sigma\in \Gal(k/k_0)$:
\[ \xymatrix{
  V \ar[r]_{a_\tau}  \ar@/^1pc/[rr]^{a_{\tau \sigma}} 
  & \, ^\tau\! V \ar[r]_{\tau(a_\sigma)}  &  ^{\tau \sigma}\! V.  
  } \] 
\section{Abelian varieties in \ch $p$}
\label{sec:04}

\subsection{}
\label{sec:41}
Let $S$ be an $\Fp$-scheme, and let $f_X:X\to S$ be an $S$-scheme. 
Denote by $F_S:S\to S$ (resp.~$F_X:X\to X$) the Frobenius morphism on $S$
(resp.~on X), which is obtained by raising to the $p$-th power on its
functions. Denote by 
\[ X^{(p)}:=X\times_{S,F_S} S \] 
the base change of
$X$ with respect to the morphism $F_S$. 
Let $F_{X/S}$ be the relative Frobenius morphism of $X$ 
over $S$, which is defined by the following diagram:
\begin{equation}
  \label{eq:41}
  \xymatrix{
  X \ar@/^1pc/[drr]^{F_X} \ar[dr]^{F_{X/S}} \ar@/_/[ddr]_{f} & & \\
  & X^{(p)} \ar[r] \ar[d]^{f^{(p)}_X} & X \ar[d]^{f_X}\\
  & S \ar[r]^{F_S} & S. \\
  }  
\end{equation}
Let $f_Y:Y\to S$ be an $S$-scheme, and let $f:X\to Y$ be a morphism of
schemes over $S$. We write $f^{(p)}$ for the morphism $X^{(p)}\to
Y^{(p)}$ induced by the base change morphism $F_S:S\to S$. Hence, we
have the following cartesian diagram and commutative diagram

\begin{equation}
  \label{eq:42}
  \begin{CD}
     X^{(p)} @>>> X\\
     @VV{f^{(p)}}V @VV{f}V \\
     Y^{(p)} @>>> Y,  \\
  \end{CD}\quad \quad
  \begin{CD}
    X @>{F_{X/S}}>> X^{(p)} \\
    @VV{f}V @VV{f^{(p)}}V  \\
    Y @>{F_{Y/S}}>>  Y^{(p)}.  \\
  \end{CD}
\end{equation}
Note that the second diagram is not necessarily cartesian. If we
write $Frob_p$ for the Frobenius map $\calO_S\to \calO_S$ raising
to the $p$-th power, then we also write $Frob_p(f)$ for $f^{(p)}$.

\subsection{}
\label{sec:42}
Let $A$ be an abelian variety over a perfect field $k$ of \ch $p$. 
Let $M^*(A)$ be
the classical contravariant \dieu module of $A$. Let $W(k)$ be the
ring of Witt vectors over $k$, and let $\sigma_p$ be the Frobenius map
on $W(k)$. If $K$ is a perfect
field containing the field $k$, then we have 
\begin{equation}
  \label{eq:43}
  M^*(A\otimes _k K)=W(K)\otimes_{W(k)} M^*(A).  
\end{equation}
In particular, we have 
\begin{equation}
  \label{eq:44}
  M^*(A^{(p)})=W(k)\otimes_{W(k),\,\sigma_p}M^*(A)
\end{equation}
By definition, the Frobenius map $F$ on $M^*(A)$ is given by the
composition of the ($W(k)$-linear) pull-back map 
\[ F^*_{X/k}:  M^*(A^{(p)})\to M^*(A) \]
and the $\sigma_p$-linear map 
\[ 1\otimes {\rm id}: M^*(A)\to
W(k)\otimes_{W(k),\,\sigma_p}M^*(A)=M^*(A^{(p)}). \]

\begin{prop}\label{41}
  Let $X$ be a $p$-divisible group over an \ac field $k$ of \ch $p$, 
  and let $\sigma$ be an automorphism of the field $k$.
  \begin{enumerate}
  \item The $p$-divisible groups $X$ and $\,^\sigma\! X$ have the
      same Newton polygon. 
  \item The $p$-divisible groups $X$ and $\,^\sigma\! X$ have the
      same Ekedahl-Oort type. That is, there exists an isomorphism 
      $X[p]\simeq \,^\sigma\! X[p]$ of finite group schemes over $k$,
      where $X[p]$ denotes the finite subgroup scheme of $p$-torsion
      of $X$.
  \item If $X$ is superspecial, then so $\,^\sigma\! X$ is.   
  \end{enumerate}
\end{prop}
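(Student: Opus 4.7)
The approach to all three parts is uniform: express each invariant in terms of the Dieudonn\'e module (or its mod $p$ reduction), and exploit the fact that, since $k$ is perfect, the automorphism $\sigma$ lifts uniquely to a Frobenius-commuting automorphism $\tilde\sigma$ of $W(k)$. Formula (\ref{eq:43}), applied with $K=k$ and structural map $\sigma$, then gives
\[
M^*({}^\sigma\! X)\;=\;W(k)\otimes_{W(k),\,\tilde\sigma}M^*(X),
\]
and the same formula modulo $p$. The operators $F$ and $V$ on the right-hand side are obtained from those on $M^*(X)$ by the standard $\sigma_p$-semilinear twisting.

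For part (1), the Newton polygon is determined by the rational Dieudonn\'e module $M^*(X)[1/p]$ viewed as an $F$-isocrystal over $W(k)[1/p]$. By Dieudonn\'e--Manin, this decomposes into simple isocrystals $N_{r,s}$, each of which descends to $\Q_p$, so the multiset of slopes depends only on the isogeny class over $\Q_p$. The $\tilde\sigma$-twist preserves each simple piece and hence preserves the multiset of slopes, proving (1).

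For parts (2) and (3), I plan to use the Kraft--Oort classification: every $BT_1$ group scheme over an algebraically closed field of characteristic $p$ admits a model $G_0$ defined over $\F_p$. Writing $X[p]\simeq G_0\otimes_{\F_p}k$ and using that $\sigma$ acts trivially on $\F_p$, we obtain
\[
({}^\sigma\! X)[p]\;=\;{}^\sigma(X[p])\;\simeq\;G_0\otimes_{\F_p}k\;\simeq\;X[p],
\]
which is (2). Part (3) is then a special case: superspeciality of a $p$-divisible group is equivalent to its EO type being the most degenerate one; equivalently, to $FM^*(X)=VM^*(X)$, or to $X\simeq M_{1,1}^n$ over $\bar k$, where $M_{1,1}$ is the $p$-divisible group of a supersingular elliptic curve (which admits an $\F_p$-model). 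Any of these characterizations is manifestly preserved by the $\tilde\sigma$-twist.

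The main technical point is essentially bookkeeping in the initial identification: one must verify that the $F$ and $V$ structure really transports through (\ref{eq:43}) in the $\tilde\sigma$-semilinear way, so that the descent facts quoted really apply. Once this is settled, the three parts follow immediately; an even more elementary route for (3), which avoids invoking Kraft--Oort, is simply to observe that the condition $FM^*(X)=VM^*(X)$ is trivially preserved by the $\tilde\sigma$-twist.
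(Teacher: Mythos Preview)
Your proof is correct. For parts (2) and (3) your argument coincides with the paper's: both invoke the Kraft--Oort classification to descend $X[p]$ to a model over $\F_p$ for (2), and both use the characterization $FM^*(X)=VM^*(X)$ (preserved under the $\tilde\sigma$-twist) for (3).

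For part (1) there is a mild difference in presentation. You work directly with the Dieudonn\'e--Manin decomposition of the isocrystal $M^*(X)[1/p]$ and observe that each simple piece descends to $\Q_p$, hence is fixed by the twist. The paper instead argues geometrically: it picks a $p$-divisible group $X_0$ over $\F_p$ with the same Newton polygon, chooses an isogeny $\varphi:X_0\to X$ over $k$, and applies $\sigma$ to obtain an isogeny ${}^\sigma\varphi: X_0={}^\sigma X_0\to {}^\sigma X$; thus $X$ and ${}^\sigma X$ are both isogenous to $X_0$. Both arguments rest on the same fact (the classification of isogeny classes is defined over $\F_p$), but the paper's version is slightly more elementary in that it avoids writing out the isocrystal decomposition and instead transports a single isogeny. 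Your route has the virtue of making the Dieudonn\'e-module bookkeeping explicit, which is the honest content of the identification $M^*({}^\sigma X)=W(k)\otimes_{W(k),\tilde\sigma}M^*(X)$ you set up at the outset.
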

\begin{proof}
  (1) Let $X_0$ be a $p$-divisible group over $\Fp$ which
      has the same Newton polygon as $X$ does. Choose an isogeny
      $\varphi:X_0\to X$ over $k$. Then we have the commutative diagram 
\[   \xymatrix{
  X \ar[dr]_{f} & X_0=\, ^\sigma\! X_0 \ar[l]_{\varphi\quad } \ar[d]^{f_0=^
  \sigma\!f_0} \ar[r]^{\quad ^\sigma\!\varphi} & ^\sigma\! X
  \ar@/^/[dl]^{^\sigma\! f}\\
  & \Spec k &  .\\ 
  } \] 
  This shows that there is an isogeny 
  between the $p$-divisible groups $X$ and $^\sigma\! X$. 

(2) According to the classification of the truncated BT groups of
    level 1  (see \cite{oort:eo}), there is a $p$-divisible group
    $X_0$ over $\Fp$ 
    such that an isomorphism $X[p]\simeq  X_0[p]$ over
    $k$ exists. Applying the automorphism $\sigma$ on this
    isomorphism, we get an isomorphism 
    $^\sigma\! X[p]\simeq \, ^\sigma\!X_0[p]=X_0[p]$ over $k$. This
    proves (2). 

(3) Since $X$ is superspecial, we have $FM^*(X)=VM^*(X)$. This yields
    that $FM^*(^\sigma\!X)=VM^*(^\sigma\!X)$. Therefore, the
    $p$-divisible group $^\sigma\!X$ is superspecial. \qed 
\end{proof}

Recall that $\Lambda_g$ is the superspecial locus of 
$\calA_g\otimes \Fpbar$. The following result
follows from Proposition~\ref{41} (3).



\begin{cor}\label{42}
  The action of the Galois group $\calG:=\Gal(\Fpbar/\Fp)$ on the
  set $\calA_g(\Fpbar)$ leaves the superspecial locus
  $\Lambda_g$ invariant. 
\end{cor}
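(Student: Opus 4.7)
The plan is to reduce the statement about superspecial principally polarized abelian varieties to the corresponding statement about their associated $p$-divisible groups, which is exactly Proposition~\ref{41}(3).

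First I would recall the characterization of superspecialness purely in terms of the attached $p$-divisible group: a $g$-dimensional abelian variety $A$ over $\Fpbar$ is superspecial if and only if the $p$-divisible group $A(p) = A[p^\infty]$ is superspecial (equivalently, $FM^*(A)=VM^*(A)$, or $A$ is isomorphic to a product of supersingular elliptic curves). This characterization is well known and does not depend on the polarization, so membership in $\Lambda_g$ depends only on the isomorphism class of $A$ as an unpolarized abelian variety.

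Next, fix $\sigma\in\calG$ and a point $[(A,\lambda)]\in\Lambda_g$. The Galois action sends this class to $[(^\sigma\! A,\,^\sigma\!\lambda)]\in \calA_g(\Fpbar)$, where $^\sigma\! A = A\times_{\Spec\Fpbar,\sigma^*}\Spec\Fpbar$ is the base change in the sense of Section~\ref{sec:22}. Base change commutes with the formation of $p$-torsion, so
\[
(^\sigma\! A)[p^\infty] \;=\; ^\sigma\!\bigl(A[p^\infty]\bigr).
\]
I would verify this compatibility by noting that $A[p^n]=\Ker([p^n]_A)$ and that kernels, being defined by a fiber product, are preserved by any base change, hence in particular by the one induced by $\sigma$. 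Since $A$ is superspecial, $A[p^\infty]$ is a superspecial $p$-divisible group over $\Fpbar$, and by Proposition~\ref{41}(3) so is $^\sigma(A[p^\infty])=(^\sigma\! A)[p^\infty]$. Therefore $^\sigma\! A$ is superspecial, i.e.\ $[(^\sigma\! A,\,^\sigma\!\lambda)]\in\Lambda_g$.

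I do not expect serious obstacles: the content of the corollary lies entirely in Proposition~\ref{41}(3), and the only thing to check beyond it is that Galois conjugation commutes with taking $p$-power torsion, which is a formal consequence of the compatibility of kernels with base change. The principal polarization $\lambda$ plays no role in the argument; it is simply transported along with $A$.
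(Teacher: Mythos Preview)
Your argument is correct and is exactly the approach the paper has in mind: Corollary~\ref{42} is stated without proof precisely because it follows immediately from Proposition~\ref{41}(3) together with the observation that $(^\sigma\! A)[p^\infty]=\,^\sigma\!(A[p^\infty])$. You have simply made explicit the details the paper leaves to the reader.
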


\subsection{}\label{sec:43}
For any abelian variety $A$ over a
field of \ch $p$, we write $A(\ell):=A[\ell^\infty]$ for the
associated $\ell$-divisible group, and $T_\ell(A)$ for the Tate module
module of $A$ in the case $\ell\neq p$.
Let $\sigma_p$ be the Frobenius automorphism in 
$\calG$. 

\begin{lemma}\label{43} Let $A_0$ be an abelian variety over $\Fp$. Let
  $\pi_0$ be the 
Frobenius endomorphism of $A_0$ over $\Fp$.
  \begin{enumerate}
  \item For any endomorphism $f\in \End_{\Fpbar}(A_0\otimes \Fpbar)$
    of $A_0 \otimes \Fpbar$, we have the
    commutative diagram of abelian varieties over $\Fpbar$
    \begin{equation}
      \label{eq:45}
      \begin{CD}
        A_0\otimes \Fpbar @>{f}>> A_0\otimes \Fpbar \\
        @VV{\pi_0}V @VV{\pi_0}V \\ 
        A_0\otimes \Fpbar @>{\sigma_p(f)}>> A_0\otimes \Fpbar.
      \end{CD}
    \end{equation}
  \item For any prime $\ell$ and any endomorphism $f$ of the
    $\ell$-divisible group $A_0(\ell)\otimes \Fpbar$, we have the
    commutative diagram of $\ell$-divisible groups over $\Fpbar$
     \begin{equation}
      \label{eq:46}
      \begin{CD}
        A_0(\ell)\otimes \Fpbar @>{f}>> A_0(\ell)\otimes \Fpbar \\
        @VV{\pi_0}V @VV{\pi_0}V \\ 
        A_0(\ell)\otimes \Fpbar @>{\sigma_p(f)}>> A_0(\ell) \otimes \Fpbar.
      \end{CD}
    \end{equation}

  \item For any prime $\ell\neq p$, any endomorphism $f$ of the
    Tate module $T_\ell(A_0)$ and any element $\sigma\in \calG$, we have the
    commutative diagram of Tate modules
     \begin{equation}
      \label{eq:47}
      \begin{CD}
        T_\ell(A_0) @>{f}>> T_\ell(A_0) \\
        @VV{\sigma}V @VV{\sigma}V \\ 
        T_\ell(A_0) @>{\sigma(f)}>> T_\ell(A_0) .
      \end{CD}
    \end{equation}
  \end{enumerate}
\end{lemma}
\begin{proof}
  The statements (1) and (2) follow immediately from the second
  commutative diagram in 
  (\ref{eq:42}). The statement (3) follows immediately from the 
  commutative diagram  
  (\ref{eq:216}) by letting $S=T=\Spec \Fpbar$ and $\tau=\sigma^*$ and
  taking the projective limit. \qed 
\end{proof}

\subsection{An example}
\label{sec:44}
We show that there is a $p$-divisible group $X$ 
over an \ac field $k$ such that $X$
is not isomorphic to $^\sigma\! X$ for some automorphism $\sigma$ of
$k$. Let $E_0$ be a supersingular $p$-divisible group of rank $2$ over
$\F_{p^2}$ such that the relative Frobenius morphism 
(from $E_0 \to E_0^{(p^2)}=E_0$) is equal to the morphism $[-p]$. Let
$X_0:=E_0^2$. The functor 
\[ \calP: (\F_{p^2}{\rm -schemes}) \to ({\rm sets}), \quad
\calP(T):=\Hom_T (\alpha_p\times T, X_0\times T) \]
is representable by the projective line $\bfP^1$ over $\F_{p^2}$.
Since any morphism $\varphi \in \Hom_T
(\alpha_p\times T, X_0\times T)$ factors through the morphism
$(\alpha_p\times \alpha_p)\times T \to X_0\times T$, the group
$\End_{\F_{p^2}} (\alpha_p^2)^\times =\GL_2(\F_{p^2})$ acts naturally
on the projective line from the left. 
For any point $\varphi=[a:b]\in \bfP^1(k)$, we write
$X_{[a:b]}$ for the quotient $p$-divisible group
$X_0/\varphi(\varphi)$.  

\begin{lemma}\label{44}
  Two $p$-divisible groups $X_{[a:b]}$ and $X_{[a':b']}$ are
  isomorphic over $k$ if and only if there exists an element $h \in
  \GL_2(\F_{p^2})$ such that $h [a:b]=[a':b']$. 
\end{lemma}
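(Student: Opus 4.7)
For the direction $(\Leftarrow)$, the plan is to lift the given $h\in \GL_2(\F_{p^2})$ to an automorphism of $X_0$, reducing to the action of $\Aut(X_0)$ on $X_0[F]\simeq\alpha_p^2$. Since $\End_{\F_{p^2}}(E_0)$ is a maximal order $O$ in the quaternion $\Q$-algebra $B_{p,\infty}$ in which the relative Frobenius $F$ serves as a uniformizer with residue field $O/FO\simeq\F_{p^2}$, the reduction map $\Aut(X_0)=\GL_2(O)\twoheadrightarrow \GL_2(\F_{p^2})=\Aut_k(X_0[F])$ is surjective. A lift $\tilde h\in\Aut(X_0)$ of $h$ preserves $X_0[F]$, acts on it as $h$, and therefore carries $\varphi(\alpha_p)=H_{[a:b]}$ to $H_{[a':b']}$; it then descends to the desired isomorphism $X_{[a:b]}\simeq X_{[a':b']}$.

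For the direction $(\Rightarrow)$, the plan is to show that any isomorphism $\psi\colon X_{[a:b]}\isoto X_{[a':b']}$ over $k$ lifts to $\tilde\psi\in\Aut(X_0)$ satisfying $q'\tilde\psi=\psi q$, where $q,q'$ denote the quotient maps; once this is achieved, $\tilde\psi(H_{[a:b]})=H_{[a':b']}$ and the image of $\tilde\psi$ in $\GL_2(\F_{p^2})$ under reduction furnishes the desired $h$. To produce the lift, the plan is to analyze the contravariant Dieudonn\'e modules $L_{[a:b]}=D^*(X_{[a:b]})\subset M_0=D^*(X_0)$, which are index-$p$ sublattices. Working in a standard basis $(e_1,e_2,f_1,f_2)$ of $M_0$ with $Fe_1=e_2$, $Fe_2=-pe_1$, $Ff_1=f_2$, $Ff_2=-pf_1$, a direct computation shows that $L_{[a:b]}$ is superspecial (i.e.\ $FL_{[a:b]}=VL_{[a:b]}$) if and only if $[a:b]\in\bfP^1(\F_{p^2})$.

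This reduces the reverse direction to three cases. If both $[a:b],[a':b']\in\bfP^1(\F_{p^2})$, then $X_{[a:b]}$ and $X_{[a':b']}$ are both superspecial hence mutually isomorphic (to $X_0$), while $\GL_2(\F_{p^2})$ acts transitively on $\bfP^1(\F_{p^2})$, so both sides of the equivalence hold; if exactly one of $[a:b]$, $[a':b']$ lies in $\bfP^1(\F_{p^2})$, then the two $p$-divisible groups have distinct $a$-numbers and the two parameters lie in distinct $\GL_2(\F_{p^2})$-orbits, so both sides fail. The main case, which will be the main obstacle of the proof, is when neither $[a:b]$ nor $[a':b']$ lies in $\bfP^1(\F_{p^2})$. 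The crux is then a uniqueness statement: $M_0$ is the only $F,V$-stable overlattice of $L_{[a:b]}$ of index $p$ in $M_0\otimes_W\Q$. This I would prove by enumerating the $F,V$-stable $k$-lines in the $4$-dimensional $k$-vector space $p^{-1}L_{[a:b]}/L_{[a:b]}$ and checking that $\alpha\notin\F_{p^2}$ (writing $[a:b]=[\alpha:1]$) forces all candidates other than the one corresponding to $M_0$ to collapse. Granted this uniqueness, the rational extension of $\psi^*\colon L_{[a':b']}\isoto L_{[a:b]}$ to $M_0\otimes_W\Q$ must send $M_0$ to $M_0$, producing the required lift $\tilde\psi\in\Aut(X_0)$ and completing the proof.
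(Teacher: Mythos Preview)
Your argument is correct. The paper does not supply a proof of this lemma---it is explicitly left ``as an exercise to the reader''---so there is no authorial proof to compare against.

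Two small imprecisions worth flagging, neither of which affects the validity of your reasoning. First, since $E_0$ here is a $p$-divisible group rather than an elliptic curve, $\End(E_0)$ is the maximal order $O_{D_p}$ in the local quaternion algebra $B_{p,\infty}\otimes_\Q\Q_p$, not in $B_{p,\infty}$ itself; the residue field is still $\F_{p^2}$, so your lifting step is unaffected. Second, you write $\Aut_k(X_0[F])=\GL_2(\F_{p^2})$, but in fact $X_0[F]\simeq\alpha_p^2$ has $\Aut_k(\alpha_p^2)=\GL_2(k)$. The correct statement, which is what your argument actually uses, is that the restriction map $\Aut_k(X_0)\to\Aut_k(X_0[F])=\GL_2(k)$ has image exactly $\GL_2(\F_{p^2})$, via reduction of $\GL_2(O_{D_p})$ modulo its uniformizer.

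Your key uniqueness claim in the non-superspecial case can be justified cleanly without the explicit basis computation you propose: an $F,V$-stable line in $p^{-1}L/L\simeq L/pL$ must be killed by both $F$ and $V$ (since $F,V$ are nilpotent on $L/pL$ and a $\sigma$-semilinear nilpotent endomorphism of a one-dimensional space is zero), hence lies in $(FL\cap VL)/pL$; a dimension count shows $\dim_k(FL\cap VL)/pL$ equals the $a$-number of $X_{[a:b]}$, which is $1$ precisely when $[a:b]\notin\bfP^1(\F_{p^2})$.
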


We leave this as an exercise to the reader. Take
$k=\ol{\Fp(t)}$. 
Let $b\in k$ be any linear fractional transformation 
such that $[1:b]\not\in 
\GL_2(\F_{p^2}) [1:t]$. Let $\sigma\in \Aut(k)$ be an automorphism
which sends $t$ to $b$. Then the $p$-divisible group  $^\sigma\!
X_{[1:t]}= X_{[1:b]}$ is not isomorphic to $X_{[1:t]}$ over $k$.

\subsection{Relationship between $p$-divisible groups $X$ and
  $^\sigma\!X$.} 
\label{sec:45}
Let $c, d$ be two positive integers. Let {\bf $p$-div}$(d,c)(k)$ be
the set of isomorphism classes of $p$-divisible groups $X$ of
dimension $d$ and of co-dimension $c$ over a field $k$ of \ch
$p$. By a {\it $p$-adic invariant $\psi$} we mean the association to
the equivalence class for an equivalent relation $\sim$ on {\bf
  $p$-div}$(d,c)(k)$ that is defined using the morphisms $F^m$, $V^n$
and 
$[p^r]$, for some integers $m, n, r$. 
For any two $p$-divisible groups $X_1,X_2\in$ {\bf $p$-div}$(d,c)(k)$, 
we write $\psi(X_1)=\psi(X_2)$ if $X_1\sim X_2$. Examples of
equivalence 
relations (over an \ac field) are:
\begin{enumerate}
\item [(i)] Define $X_1\sim X_2$ if $X_1$ is isogenous to $X_2$. In
  this 
  case, the $p$-adic invariant $\psi$ associates to $X$ 
  its Newton polygon $NP(X)$.
\item [(ii)] Define $X_1\sim X_2$ if there exists an isomorphism
  $X_1[p]\simeq X_2[p]$. In this
  case, the $p$-adic invariant $\psi$ associates to $X$ 
  its EO type $ES(X)$; see Oort \cite{oort:eo} for detail descriptions
  of EO types.  
\item [(iii)] Define $X_1\sim X_2$ if there exists an 
  isomorphism $X_1\simeq
  X_2$. In this case, the $p$-adic invariant $\psi$ associates
  to $X$ the isomorphism class of itself.
\end{enumerate}

A $p$-adic invariant $\psi$ is said to be {\it discrete} if the image 
\[ \Psi(k):=\{ \psi(X); X\in \text{{\bf $p$-div}$(d,c)(k)$}\, \} \]
is finite for any \ac field $k\supset \Fp$. The $p$-adic invariants in
(i) and (ii) are discrete, while the $p$-adic invariant in (iii) is not.

\begin{prop}\label{45}
  Let $\psi$ be a discrete $p$-adic invariant on the set {\bf
    $p$-div}$(d,c)(k)$, where $k$ is an algebraically closed
  field of \ch $p$. Then there is a $p$-power integer $q\in \bbN$ 
  such that 
\[ \psi(X)=\psi(^\sigma\!X) \]
for all $X\in \text{{\bf $p$-div}$(d,c)(k)$}$ and $\sigma\in
\Gal(k/\Fq)$.   
\end{prop}
\begin{proof}
  Since $\psi$ is discrete, there is a $p$-power integer $q\in \bbN$
  such that for any $X\in \text{{\bf $p$-div}$(d,c)(k)$}$, there is a
  $p$-divisible group $X_0$ over $\Fq$ such that 
  \begin{equation}
    \label{eq:48}
  \psi(X) = \psi(X_0\otimes_{\Fq} k).  
  \end{equation}
(otherwise $\Psi(\Fpbar)$ would be infinite). We apply any element
$\sigma\in  \Gal(k/\Fq)$ and get $\psi(^\sigma\! X) =
\psi(X_0\otimes_{\Fq} k)$. Then the assertion follows. \qed
\end{proof}

Due to the example in Section \ref{sec:44}, Proposition~\ref{45}
seems to 
be the best we can hope for about the relationship between the
$p$-divisible groups $X$ and $^\sigma\! X$.

\section{
Proof of Theorem~\ref{11}}
\label{sec:05}

\subsection{}
\label{sec:51}
Let $\ul A_0=(A_0,\lambda_0)$ be a $g$-dimensional superspecial
principally polarized abelian variety over $\Fp$. Recall that to the
polarized abelian variety $\ul A_0$ we
associate two group schemes $G_1 \subset G$ over $\Z$ as (\ref{eq:0}). 
Let $\nu:G\to \Gm$ be the multiplier
character; we have $\ker \nu=G_1$. 
Recall that $\sigma_p$ denotes the Frobenius automorphism in 
$\calG=\Gal(\Fpbar/\Fp)$, and $\pi_0$ is the  Frobenius
endomorphism of $A_0$ over $\Fp$. 

\begin{lemma}\label{51}
  We have $\pi_0\in G(\Q)$. 
\end{lemma}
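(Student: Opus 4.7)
The claim is that $\pi_0\in G(\Q)$, i.e.\ $\pi_0$ lies in $(\End_{\Fpbar}(A_0)\otimes\Q)^\times$ and $\pi_0'\pi_0\in\Q^\times$. My plan has three steps corresponding to the three conditions implicit in the definition of $G(\Q)$.

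First I would verify that $\pi_0$ defines an element of the endomorphism algebra. Since $A_0$ is defined over $\Fp$, we have $A_0^{(p)}=A_0$, so the relative Frobenius $F_{A_0/\Fp}$ is an honest endomorphism $\pi_0\in\End_{\Fp}(A_0)$, which embeds naturally into $\End_{\Fpbar}(A_0\otimes\Fpbar)$. Next, $\pi_0$ is an isogeny: it is surjective with finite kernel (of order $p^g$), the standard fact for Frobenius on an abelian variety over a finite field. Hence $\pi_0$ is a unit in the endomorphism algebra $\End_{\Fpbar}(A_0\otimes\Fpbar)\otimes\Q$.

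The main content is the third condition $\pi_0'\pi_0\in\Q^\times$, for which I would show the stronger equality $\pi_0'\pi_0=p$. The identification I want is $\pi_0'=V_0$, the Verschiebung, from which $\pi_0'\pi_0=V_0\pi_0=[p]$ is immediate. To obtain this, recall that for $A_0$ over $\Fp$ the dual of Frobenius is Verschiebung on the dual: $\pi_0^\vee=V_{A_0^\vee/\Fp}$. The polarization $\lambda_0:A_0\to A_0^\vee$ is itself defined over $\Fp$, so by naturality of Frobenius we have $\lambda_0\circ\pi_0=F_{A_0^\vee/\Fp}\circ\lambda_0$, and the analogous statement for Verschiebung gives $\lambda_0\circ V_0=V_{A_0^\vee/\Fp}\circ\lambda_0=\pi_0^\vee\circ\lambda_0$. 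Conjugating by $\lambda_0$ yields $\pi_0'=\lambda_0^{-1}\pi_0^\vee\lambda_0=V_0$, as desired.

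The only step that is not entirely formal is Step 3; the potential pitfall is being careful about which Frobenius lives on which abelian variety (on $A_0$ versus on $A_0^\vee$) and keeping the identification $A_0^{(p)}=A_0$ (valid because of the $\Fp$-structure) straight from the abstract relative Frobenius. Once those compatibilities are in place, the computation $\pi_0'\pi_0=V_0\pi_0=[p]$ is a one-line identity and gives $\pi_0'\pi_0=p\in\Q^\times$, completing the proof.
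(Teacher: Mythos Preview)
Your proof is correct and arrives at the same identity $\pi_0'\pi_0=p$, but by a different route than the paper. You work entirely inside the category of abelian varieties: you identify the Rosati adjoint of Frobenius with Verschiebung via $\pi_0^\vee=V_{A_0^\vee}$ together with the $\Fp$-rationality of $\lambda_0$, and then invoke the classical identity $VF=[p]$. The paper instead passes to the $\ell$-adic side: it uses that the Weil pairing $e_\ell$ on $T_\ell(A_0)$ is Galois-equivariant, identifies the action of $\pi_0$ on $T_\ell(A_0)$ with that of $\sigma_p$ (via Proposition~\ref{43}), and reads off $e_\ell(\pi_0 x,\pi_0 y)=p\,e_\ell(x,y)$ from the Galois action on $\Z_\ell(1)$. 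Your argument is more elementary and self-contained; the paper's argument has the advantage of being in the same Tate-module/Galois-action language that drives the rest of Section~\ref{sec:05}, so the lemma there doubles as a warm-up for Proposition~\ref{52}.
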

\begin{proof}
  Choose any prime $\ell\neq p$. The polarization $\lambda_0$ induces
  the Weil pairing $e_\ell: T_\ell(A_0)\times T_\ell(A_0) \to \Z_\ell
  (1)$, which is $\calG$-equivariant. Then we have 
  (cf.~Lemma~\ref{43} (2) and (3)) 
  $e_\ell(\pi x_0, \pi_0 y)=e_\ell(\sigma_p x, \sigma_p y)=
  p e_\ell(x, y)$ and $\pi_0'\pi_0=p$ on $T_\ell(A_0)$. Since the
  $\ell$-adic representation is faithful, $\pi_0\in G(\Q)$. \qed
\end{proof}

\begin{prop}\label{52}
  The action of $\calG$ on $G(\A_f)$ is given by 
  \begin{equation}
    \label{eq:51}
    \sigma_p (x_\ell)_\ell= (\pi_0 x_\ell \pi_0^{-1})_\ell, \quad
(x_\ell)_\ell \in G(\A_f).
  \end{equation}
\end{prop}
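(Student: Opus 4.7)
The plan is to reduce Proposition~\ref{52} to Proposition~\ref{43}(1) applied componentwise.

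First, I would unpack the Galois action on $G(\A_f)$. Since $A_0$ is defined over $\Fp$, the Galois group $\calG$ acts naturally on the scheme $A_0\otimes\Fpbar$ and hence on the ring $\End_{\Fpbar}(A_0\otimes\Fpbar)$. Extending $\Q_\ell$-linearly produces a $\calG$-action on $(\End_{\Fpbar}(A_0\otimes\Fpbar)\otimes\Q_\ell)^\times$, and this descends to $G(\Q_\ell)$ because the Rosati involution is $\calG$-equivariant (the polarization $\lambda_0$ being defined over $\Fp$). Assembling these prime by prime yields the $\calG$-action on $G(\A_f)\subset \prod_\ell G(\Q_\ell)$. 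Therefore the claim reduces to proving $\sigma_p(x) = \pi_0 x \pi_0^{-1}$ for each $x \in (\End_{\Fpbar}(A_0\otimes\Fpbar)\otimes \Q_\ell)^\times$ and each prime $\ell$.

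The main input is Proposition~\ref{43}(1), which gives for every $f\in \End_{\Fpbar}(A_0\otimes\Fpbar)$ the identity $\pi_0\circ f = \sigma_p(f)\circ \pi_0$ as endomorphisms of $A_0\otimes\Fpbar$. Since $\pi_0$ is an isogeny, it is invertible in the isogeny algebra $\End^0(A_0\otimes\Fpbar) = \End_{\Fpbar}(A_0\otimes\Fpbar)\otimes\Q$, so the identity rearranges to $\sigma_p(f) = \pi_0\, f\, \pi_0^{-1}$. By Lemma~\ref{51}, $\pi_0 \in G(\Q)$, so its diagonal image lies in $G(\A_f)$ and the conjugation in the statement is well-defined. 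Extending the identity $\Q_\ell$-linearly for each prime $\ell$ and taking the product yields the desired formula.

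I expect no serious obstacle here: once Proposition~\ref{43}(1) is in hand and the definitions have been unwound, the proposition is essentially immediate. The only point that deserves a brief check is that $\calG$ indeed preserves the condition $x'x \in \Q_\ell^\times$ cutting $G(\Q_\ell)$ out of the ambient invertibles, which follows from the $\calG$-equivariance of the Rosati involution (again because $\lambda_0$ is defined over $\Fp$).
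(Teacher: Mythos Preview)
Your proof is correct and essentially parallel to the paper's, with one minor but genuine difference in routing. The paper reduces to the relation $\sigma_p x_\ell=\pi_0 x_\ell\pi_0^{-1}$ on $\End(A_0\otimes\Fpbar)\otimes\Z_\ell$ for each prime $\ell$, then invokes the supersingularity isomorphism $\End(A_0\otimes\Fpbar)\otimes\Z_\ell\simeq\End(A_0(\ell)\otimes\Fpbar)$ and applies Proposition~\ref{43}(2) at the level of $\ell$-divisible groups. You instead apply Proposition~\ref{43}(1) directly to the integral endomorphism ring of the abelian variety and then tensor with $\Q_\ell$, which bypasses the need for the supersingularity isomorphism at this step. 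Your route is marginally more elementary; the paper's route has the virtue of making explicit that the argument works uniformly for all $\ell$ including $p$ via the $\ell$-divisible group. Either way the content is the same commutation relation $\sigma_p(f)\circ\pi_0=\pi_0\circ f$.
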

\begin{proof}
  It suffices to show that for any prime $\ell$ (including $p$), the
  action of $\calG$ on $\End(A_0\otimes \Fpbar)\otimes \Z_\ell$ is
  given by 
  \begin{equation}
    \label{eq:52}
    \sigma_p x_\ell= \pi_0 x_\ell \pi_0^{-1}, \quad x_\ell\in
    \End(A_0\otimes \Fpbar)\otimes \Z_\ell. 
  \end{equation}
Since $A_0$ is supersingular, we have the natural isomorphism 
\[ \End(A_0\otimes \Fpbar)\otimes \Z_\ell\simeq \End(A(\ell)\otimes
\Fpbar). \]
The relation (\ref{eq:52}) then follows from
Lemma~\ref{43} (2). \qed 
\end{proof}

\subsection{}
\label{sec:52}
We now describe the map 
\[ \bfd_{\bf 1} : G_1(\Q)\backslash G_1(\A_f)/G_1(\hat
\Z)\simeq \Lambda_g\]  
and show that it is $\calG$-equivariant. We introduce some
notation. \\

\npr {\it Notation.} Let $k$ be any field. Let $\ell$ be a prime, not
necessarily invertible in $k$. For any object $\ul A=(A,\lambda)$ in
$\calA_g$ over $k$, we write $\ul A(\ell):=(A,\lambda)[\ell^\infty]$ 
for the associated $\ell$-divisible group with the induced
quasi-polarization. 
For any two members $\ul A_1=(A_1,\lambda_1)$ and $\ul
A_2=(A_2,\lambda_2)$ in $\calA_g$ over $k$,
denote by 
\begin{itemize}
\item $\qisom_k(\ul A_1, \ul A_2)$ (resp.~$\Isom_k(\ul A_1, \ul
A_2)$) the set of 
quasi-isogenies (resp.~isomorphisms) $\varphi:A_1\to A_2$ over $k$
such that $\varphi^*\lambda_2=\lambda_1$, and   
\item $\qisom_k(\ul A_1(\ell), \ul A_2(\ell))$ (resp. $\Isom_k(\ul
A_1(\ell), \ul A_2(\ell))$) the set of  
quasi-isogenies (resp.~isomorphisms) $\varphi:A_1[\ell^\infty]\to
A_2[\ell^\infty]$ such that $\varphi^*\lambda_2=\lambda_1$.
\end{itemize}

\begin{prop}\label{53}
  Let $(\phi_\ell)_\ell \in G_1(\A_f)$ be an element.
Then there exist 
\begin{itemize}
\item a member
$\ul A=(A,\lambda)\in \Lambda_g$ determined up to isomorphism,
\item a quasi-isogeny $\phi\in \qisom_{\Fpbar}(\ul A, \ul A_0)$, and
\item an isomorphism $\alpha_\ell\in \Isom_{\Fpbar} (\ul A_0(\ell),\ul
  A(\ell))$ for each $\ell$
\end{itemize}
such that $\phi_\ell=\phi \circ \alpha_\ell$ for all
$\ell$. Moreover, the map ${\bf \wt d_1}: G_1(\A_f)\to \Lambda_g$
which sends $(\phi_\ell)_\ell$ to the
isomorphism class $[\ul A]$ induces a well-defined and bijective map
\[ {\bf d_1}:G_1(\Q)\backslash G_1(\A_f)/G_1(\hat
\Z)\simeq \Lambda_g. \]
\end{prop}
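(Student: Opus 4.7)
The plan is to build, from an element $(\phi_\ell)_\ell \in G_1(\A_f)$, a principally polarized superspecial abelian variety $(A,\lambda)$ over $\Fpbar$ by ``modifying'' $A_0 \otimes \Fpbar$ prime by prime, and then to check that the resulting assignment descends to a well-defined bijection on the double coset quotient.

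For the construction, first note that $\phi_\ell \in G_1(\Z_\ell)$ for almost all $\ell$, so there is a positive integer $N$ such that each $N\phi_\ell^{-1}$ is an honest isogeny of the $\ell$-divisible group $A_0(\ell)\otimes \Fpbar$ and an isomorphism for all but finitely many $\ell$. The finite kernels assemble into a finite subgroup scheme $H \subset (A_0 \otimes \Fpbar)[N]$; set $A := (A_0\otimes \Fpbar)/H$, and let $\phi: A \to A_0\otimes \Fpbar$ be the induced quasi-isogeny. On each $\ell$-divisible group the quotient map furnishes an isomorphism $\alpha_\ell \in \Isom_{\Fpbar}(\ul A_0(\ell), \ul A(\ell))$ so that $\phi \circ \alpha_\ell = \phi_\ell$ by construction. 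Define $\lambda := (\phi^{-1})^*\lambda_0$ as a quasi-polarization; this is an honest \emph{principal} polarization precisely because each $\phi_\ell$ satisfies $\phi_\ell' \phi_\ell = 1$, which is the reason for working with $G_1$ rather than $G$. That $\ul A \in \Lambda_g$ follows from Proposition~\ref{41}(3) combined with the fact that superspecialness is preserved under isogeny at every prime.

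For the descent to $G_1(\Q)\backslash G_1(\A_f)/G_1(\hat \Z)$, left-multiplication of $(\phi_\ell)$ by $g \in G_1(\Q)$ replaces $\phi$ by $g^{-1}\circ \phi$ without altering the isomorphism class of $\ul A$, while right-multiplication by $(u_\ell) \in G_1(\hat \Z)$ modifies each $\alpha_\ell$ by an automorphism of $\ul A_0(\ell)$ and again leaves $\ul A$ untouched. Injectivity of ${\bf d}_1$ is the converse: an isomorphism $\ul A_1 \simeq \ul A_2$ between two outputs yields, via composition with the two quasi-isogenies to $\ul A_0$, an element of $G_1(\Q)$ on the left; comparing the two systems of $\alpha_\ell$'s through this identification produces an element of $G_1(\hat \Z)$ on the right. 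Surjectivity uses that every $\ul A \in \Lambda_g$ is quasi-isogenous as a polarized abelian variety to $\ul A_0$ over $\Fpbar$, so an arbitrary choice of such $\phi$ together with arbitrary $\alpha_\ell$'s recovers an adelic preimage.

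The main obstacle is the bookkeeping at $p$. For $\ell \neq p$ the Tate-module language makes everything transparent, but at $p$ one must argue with Dieudonn\'e modules. In particular one needs the $G(\Q_p)$-orbit of the Dieudonn\'e lattice $M^*(A_0)$ to consist exactly of the superspecial lattices inside $M^*(A_0)\otimes \Q_p$, with stabilizer $G(\Z_p)$; this $p$-adic transitivity and stabilizer computation is what underwrites both the surjectivity onto $\Lambda_g$ and the right-invariance by $G_1(\hat \Z)$ at $p$. This is precisely the content of the references cited just before the statement, which I would invoke rather than re-derive.
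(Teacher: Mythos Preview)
The paper itself does not prove this proposition; it simply cites \cite[Theorem 10.5]{yu:thesis} and \cite[Theorem 2.2]{yu:smf}. Your sketch is essentially the standard argument those references contain, so you are supplying what the paper chose to omit, and your identification of the $p$-adic step (transitivity of $G_1(\Q_p)$ on superspecial Dieudonn\'e lattices, with stabilizer $G_1(\Z_p)$) as the only genuinely nontrivial point is exactly right.

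Two small corrections are worth making. First, with $\phi\in\qisom_{\Fpbar}(\ul A,\ul A_0)$ meaning $\phi:A\to A_0$ and $\phi^*\lambda_0=\lambda$, the polarization you want is $\lambda=\phi^*\lambda_0$, not $(\phi^{-1})^*\lambda_0$. Second, and more substantively, the sentence ``superspecialness is preserved under isogeny at every prime'' is false as written: supersingularity is an isogeny invariant, but superspecialness is not (a generic supersingular abelian surface is isogenous but not isomorphic to $E^2$). Proposition~\ref{41}(3) is also not the relevant input here, since it concerns Galois conjugates rather than isogenies. The correct reason $A$ is superspecial is already implicit in your last paragraph: any element of $\End(A_0(p))\otimes\Q_p$ commutes with $F$ and $V$ on the rational Dieudonn\'e module, so if $M_0=M^*(A_0)$ satisfies $FM_0=VM_0$ then $\phi_p M_0$ satisfies $F(\phi_p M_0)=V(\phi_p M_0)$ as well. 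With these fixes the argument is sound.
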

\begin{proof}
  See \cite[Theorem 10.5]{yu:thesis} or \cite[Theorem 2.2]{yu:smf}.  
\end{proof} 
 
\ 

Let $(\phi_\ell)_\ell \in G_1(\A_f)$ be an element, and let $\ul A$,
$\phi$, $\alpha_\ell$ be as in Proposition~\ref{53}. 
Applying an element $\sigma\in\calG$, we get 
\[ \sigma(\phi)\in \qisom_{\Fpbar}( {}^\sigma\! \ul A, \ul
A_0), \quad \sigma(\alpha_\ell) \in \Isom_{\Fpbar} (\ul A_0(\ell),
{}^\sigma\!\ul A(\ell))\]  
and 
\[ \sigma(\phi_\ell)=\sigma(\phi)\circ \sigma(\alpha_\ell), \quad
\forall\, \ell. \]
This yields ${\bf \wt d_1}(\sigma (\phi_\ell)_\ell)=
[\, {}^\sigma\!\ul A]$, that is, the map ${\bf \wt d_1}$ 
is $\calG$-equivariant.

The inclusion $G_1(\A_f)\to G(\A_f)$ is $\calG$-equivariant and it
induces a $\calG$-equivariant
bijection \[ G_1(\Q)\backslash G_1(\A_f)/G_1(\hat \Z)\simeq
G(\Q)\backslash G(\A_f)/G(\hat \Z).\] 
Let 
\[ {\bf \wt d}:G(\A_f)\to G(\Q)\backslash G(\A_f)/G(\hat \Z)\simeq
G_1(\Q)\backslash G_1(\A_f)/G_1(\hat \Z) 
\stackrel{\bf  d_1}{\to} \Lambda_g \]
be the composition. Therefore, 
the induced map ${\bf \wt d}:G(\A_f)\to \Lambda_g$ is
$\calG$-equivariant. This proves Theorem~\ref{11}.   

\begin{remark}\label{54}
  For a suitable choice of the base point $(A_0,\lambda_0)$ as the
  product of copies of $E_0$ with $\pi_{E_0}^2=-p$, one easily sees
  that $\pi_0$ can be represented as the matrix
\[ 
\begin{pmatrix}
   0 & -p I_g \\
   I_g & 0 
\end{pmatrix},
\] 
for a suitable identification $\End^0(\ol A_0)\simeq
M_g(B_{p,\infty})\subset M_{2g}(\Q(\sqrt{-p}))$, where $I_g$ is
the identity matrix of size $g$. 
The action
of Frobenius automorphism $\sigma_p$ on $G(\Q)\backslash G(\A_f)/G(\hat
\Z)$, according to Proposition~\ref{52},  is an involution of
Atkin-Lehner type. 
\end{remark}






\section{Proofs of Theorems~\ref{12} and \ref{13}}
\label{sec:06}

In this and the next sections, we fix a base point $(A_0,\lambda_0)$ 
over $\Fp$ as
\begin{equation}
  \label{eq:600}
  (A_0,\lambda_0)=(E_0^g,\mu_0^g),
\end{equation}
where $E_0$ is a supersingular elliptic curve over
$\Fp$ satisfying $\pi_{E_0}^2=-p$, and $\mu_0$ is 
the canonical polarization on $E_0$.  
Let $G$ be the group scheme over $\Z$ associated to 
$(A_0,\lambda_0)$ as in (\ref{eq:0}). 

\subsection{Proofs of Theorems~\ref{12} and \ref{13} (1)}
\label{sec:61}

\begin{prop}\label{61} Every member $(A,\lambda)$ in $\Lambda_g$ has
  a unique model defined over $\F_{p^2}$, up to isomorphism over
  $\F_{p^2}$, such that the quasi-isogeny $\phi$ in
  Proposition~\ref{53} can be chosen defined over $\F_{p^2}$.
\end{prop}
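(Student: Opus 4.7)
The strategy combines the Galois equivariance of Theorem~\ref{11} with Weil descent (Theorem~\ref{31}), applied to the rigidified triple $(A,\phi,(\alpha_\ell)_\ell)$ produced by Proposition~\ref{53}. The key observation is that $\pi_0^2=-p$ is a central element of $\End^0(A_0\otimes \Fpbar)$; by Theorem~\ref{11}(1) conjugation by $\pi_0^2$ acts trivially on $G(\A_f)$, so $\sigma_p^2$ acts trivially there, and Theorem~\ref{11}(2) then forces $\sigma_p^2$ to act trivially on $\Lambda_g$ as well. In particular every $(A,\lambda)\in\Lambda_g$ is $\Fpbar$-isomorphic to its $\sigma_p^2$-conjugate, which is the hypothesis that feeds into descent.

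\textbf{Constructing the descent datum.} Fix $(A,\lambda)\in \Lambda_g$ and, by Proposition~\ref{53}, choose a rigidification $(A,\phi,(\alpha_\ell)_\ell)$ representing a class $x\in G(\A_f)$. Applying $\sigma_p^2$ produces a second rigidification $(\sigma_p^2(A),\sigma_p^2(\phi),(\sigma_p^2(\alpha_\ell))_\ell)$ representing the same class $\sigma_p^2(x)=x$, so the uniqueness-up-to-isomorphism in Proposition~\ref{53} furnishes an $\Fpbar$-isomorphism $\psi: A\to \sigma_p^2(A)$ with $\sigma_p^2(\phi)\circ \psi=\phi$; this $\psi$ is unique because $\sigma_p^2(\phi)$, being a quasi-isogeny, is a quasi-injection of $\End^0$-modules. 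The same uniqueness at iterated Frobenius powers yields a compatible family $\psi_k: A\to \sigma_p^{2k}(A)$ characterized by $\sigma_p^{2k}(\phi)\circ \psi_k=\phi$, and the characterization forces both the cocycle relation $\psi_{k+l}=\sigma_p^{2k}(\psi_l)\circ \psi_k$ and, at a finite level $\F_{p^{2n}}$ over which $(A,\lambda)$ is defined, the closing condition $\psi_n=\mathrm{id}_A$.

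\textbf{Descending.} Applying Theorem~\ref{31} to this $1$-cocycle for the finite cyclic group $\Gal(\F_{p^{2n}}/\F_{p^2})$ yields an $\F_{p^2}$-model $(A',\lambda')$ of $(A,\lambda)$, unique up to $\F_{p^2}$-isomorphism. The relation $\sigma_p^2(\phi)\circ \psi=\phi$ is exactly the equivariance of $\phi$ with respect to the two descent data (the trivial one on $A_0\otimes \Fpbar$, since $A_0$ is already defined over $\F_p$, and the one just built on $A$). Consequently, for $N$ sufficiently divisible the honest isogeny $N\phi$ descends to an $\F_{p^2}$-isogeny $A'\to A_0\otimes \F_{p^2}$; dividing back by $N$ yields the desired quasi-isogeny $\phi'$ over $\F_{p^2}$ whose base change to $\Fpbar$ recovers $\phi$. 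Uniqueness of the pair $(A',\phi')$ up to $\F_{p^2}$-isomorphism follows from the uniqueness in Theorem~\ref{31} together with the fact that $\phi$ pins down the rigidification.

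\textbf{Main obstacle.} The delicate point is not producing a single $\psi$ but verifying that the family $\{\psi_k\}$ assembles into a genuine cocycle — individual existence is not enough. Everything is controlled by pushing the characterization ``$\sigma_p^{2k}(\phi)\circ \psi_k=\phi$'' through compositions: the injectivity of the quasi-isogeny $\sigma_p^{2k}(\phi)$ forces the associativity and the closing relation. Once this is in place, the remainder of the argument is standard Weil descent, with the passage from ``quasi-isogeny'' to ``isogeny'' handled by clearing denominators.
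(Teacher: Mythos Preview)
Your argument is correct and follows essentially the same route as the paper: use $\pi_0^2=-p$ central (via Theorem~\ref{11}) to see that $\Gal(\Fpbar/\F_{p^2})$ fixes the adelic class, build the descent datum from the quasi-isogeny $\phi$, and apply Weil's Theorem~\ref{31}. The paper packages this slightly more efficiently by writing the datum directly as $f_{\tau,\sigma}=\tau(\phi)^{-1}\circ\sigma(\phi)$, so that the cocycle conditions are automatic from the shape of the formula rather than requiring a separate uniqueness argument for each $\psi_k$; it also descends $\phi$ simply by observing $\sigma(\phi\circ f)=\phi\circ f$ rather than clearing denominators. These are cosmetic differences, not substantive ones.
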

\begin{proof}
  This refines \cite[Lemma 2.1]{ibukiyama-katsura}. 
  Let $(\phi_\ell)\in G_1(\A_f)$ be an element such that the class
  $[(\phi_\ell)]$ corresponds to $(A,\lambda)$. Let $\phi$ and
  $\alpha_\ell$ for all $\ell$ be as in Proposition~\ref{53}. If $\sigma\in
  \Gal(\Fpbar/\F_{p^2})$, then by Theorem~\ref{11} we have
  $\sigma(\phi)\circ\sigma(\alpha_\ell)=\phi\circ \alpha_\ell$, and
  hence $\phi_\sigma:=\phi^{-1}\circ \sigma(\phi):(^\sigma\!
  A,^\sigma\!\lambda)\to (A,\lambda)$ is an isomorphism. For
  $\sigma,\tau\in \Gal(\Fpbar/\F_{p^2})$, put
  $f_{\tau,\sigma}=\phi_\tau^{-1} \circ \phi_\sigma:(\,^\sigma\!
  A,\,^\sigma\!\lambda)\simeq (\,^\tau\!A,\,^\tau\!\lambda)$. Then it
  is easy to check that the conditions (i) and (ii) of
  Theorem~\ref{31} for $f_{\tau, \sigma}$ are
  satisfied. Therefore by Weil's Theorem, there exist a model 
  $(A_1,\lambda_1)$ over $\F_{p^2}$ and an isomorphism
  $f:(A_1,\lambda_1)\otimes_{\F_{p^2}} \Fpbar \simeq(A,\lambda)$ such
  that $\phi_\sigma= f\circ \sigma(f)^{-1}$. We get 
  $\sigma(\phi \circ f)=\phi\circ f$, and hence $\phi_1:=\phi\circ f$ is a
  quasi-isogeny in $\qisom_{\F_{p^2}}(\ul A_1, \ul A_0)$ 

  Suppose that $(A_2,\lambda_2)$ is another model over $\F_{p^2}$ of
  $(A,\lambda)$ such that the set $\qisom_{\F_{p^2}}(\ul A_2, \ul
  A_0)$ is non-empty. Then we can choose 
  $\phi_2\in \qisom_{\F_{p^2}}(\ul A_2, \ul A_0)$ such that 
  $\phi_2^{-1} \phi_1:\ul A_1\to \ul A_2$ is an
  isomorphism, which is defined over $\F_{p^2}$. \qed 
\end{proof}




\begin{defn}\label{62}
  We shall call the unique model $\ul A_1$  
obtained in Proposition~\ref{61} the {\it canonical model} of
$(A,\lambda)$ over $\F_{p^2}$. Note that if $\ul A_1$ is a canonical
model over $\F_{p^2}$, then the Frobenius endomorphism
$\pi_{A_1}$ of $A_1$ over $\F_{p^2}$ is equal to $-p$,
and hence every endomorphism in
$\End_{\Fpbar}(A_1\otimes \Fpbar)$ is defined over $\F_{p^2}$.
\end{defn}

Let ${\bf \Lambda}_g\subset \calA_g$ be the superspecial locus;
$\Lambda_g={\bf \Lambda}_p(\Fpbar)$ classifies $\Fpbar$-isomorphism
class of $g$-dimensional superspecial principally polarized abelian
varieties. The set ${\bf \Lambda}_g(\Fp)$ of $\Fp$-rational 
points consists 
objects in $\Lambda_g$ that are fixed by $\sigma_p$. 
It follows from Theorem~\ref{11} that 
${\bf \Lambda}_g(\F_{p^2})=\Lambda_g$.

Proposition~\ref{61} allows us to identify $\Lambda_g$
with the set of $\F_{p^2}$-isomorphism classes of $g$-dimensional 
superspecial principally polarized abelian varieties
$(A_1,\lambda_1)$ over $\F_{p^2}$ 
such that the 
set $\qisom_{\F_{p^2}}(\ul A_1, \ul A_0)$ (see Section~\ref{sec:52}) 
is non-empty.   



Recall that $U:=G(\hat \Z)\subset
G(\A_f)$ and $U(\pi_0):=U\pi_0=\pi_0 U\subset G(\A_f)$. 

\begin{prop}\label{63}\ 
\begin{itemize}
\item[(1)] Let $(A,\lambda)$ be a member in $\Lambda_g$ and let
    $[x]\in G(\Q)\backslash G(\A_f)/U$ be the double coset
    corresponding to $(A,\lambda)$. Then
    $(A,\lambda)$ lies in ${\bf \Lambda}_g(\Fp)$ if and only if 
\begin{equation}
      \label{eq:61}
      G(\Q)\cap x    U(\pi_0) x^{-1}\neq \emptyset.
\end{equation}
\item[(2)] We have $|{\bf \Lambda}_g(\Fp)|=\tr R(\pi_0)$.  
\end{itemize}
\end{prop}
\begin{proof}
  (1) The isomorphism class of $(A,\lambda)$ is fixed by $\sigma_p$
      exactly when 
      $[x]=[\pi_0 x \pi_0^{-1}]=[x\pi_0]$. Therefore, there are some
      elements $u\in
      U$ and $a\in G(\Q)$ such that $x=a x \pi_0 u$. We get $a^{-1}= x
      \pi_0 u x^{-1}$. This is equivalent to condition
      (\ref{eq:61}).

  (2) By Theorem~\ref{11}, $R(\pi_0)$ is the operator induced by the
      action of $\sigma_p^{-1}$. Therefore, the number of fixed points of
      $\sigma_p$ is equal to $\tr R(\pi_0)$. \qed 
\end{proof}

\begin{lemma}\label{64}
  Let $(A,\lambda)$ be a polarized abelian variety over
  $\Fpbar$, and suppose that the field of moduli of $(A,\lambda)$ is 
  $\F_{p^a}$. 
  Then $(A,\lambda)$  has a model defined over
  $\F_{p^a}$. Particularly, any member $(A,\lambda)\in {\bf
  \Lambda}_g(\Fp)$ admits a model defined over $\Fp$. 
\end{lemma}
\begin{proof}
  Put $q:=p^a$ and 
  let $\sigma:x\mapsto x^q$ be the Frobenius automorphism in
  $\Gal(\Fpbar/\Fq)$. Suppose $(A,\lambda)$ has a model
  $(A_1,\lambda_1)$ defined over $\F_{q^c}$ for some positive integer
  $c$ divisible by $a$. Increasing $c$ if necessary, we may assume
  that $\End_{\Fpbar}(A)=\End_{\F_{p^c}}(A_1)$. 
  As the field of moduli of $(A,\lambda)$ is 
  $\F_{p^a}$, there exists an
  isomorphism 
  \[ a_\sigma: (A,\lambda)\simeq 
  (\, ^\sigma\! A,\, ^\sigma \!\lambda)\] 
  of polarized abelian varieties over $\Fpbar$. Let 
  \[ b_\sigma:=\sigma^{c-1}(a_\sigma)\cdots \sigma(a_\sigma) a_\sigma:
  (A,\lambda)\to (A,\lambda)\] 
  be the
  composition. Since $b_\sigma$ is an automorphism of $(A,\lambda)$,
  it is of finite order, say $b_\sigma^m=1$ for some $m\ge 1$. It
  follows from $\End(A)=\End(A_1)$ that
  $\sigma^c(b_\sigma)=b_\sigma$. We get
  \begin{equation}
    \label{eq:611}
    \sigma^{mc-1}(a_\sigma)\cdots \sigma(a_\sigma) a_\sigma=1. 
  \end{equation}
Define $a_{\sigma^n}:=\sigma(a_{\sigma^{n-1}}) a_\sigma$
recursively. The collection of automorphisms $a_{\sigma^n}$ satisfies
the 1-cocycle condition for the Galois extension $\F_{q^{mc}}/\Fq$, 
by (\ref{eq:611}). 
Then by Weil's criterion (Theorem~\ref{31}), there exist an abelian
variety $A'$ over $\Fq$ and an isomorphism $f:A'\otimes \Fpbar\simeq
A$ such that $a_\sigma=\sigma(f)\circ f^{-1}$. Set $\lambda':=f^*
\lambda$. 
Using $\lambda=a_\sigma^* \sigma(\lambda)$, we get
\begin{equation}
  \label{eq:612}
  \begin{split}
    \lambda' &= f^t \lambda f = f^t (a_\sigma^* \sigma(\lambda)) f \\ 
      &= f^t a_\sigma^t \sigma(\lambda) a_\sigma f =\sigma(f^t)
      \sigma(\lambda) \sigma(f)=\sigma(\lambda').  
  \end{split}
\end{equation}
This shows that $(A',\lambda')$ is a model defined over $\Fq$ of
$(A,\lambda)$. \qed
\end{proof}

  Theorems~\ref{12} and \ref{13} (1) follow from
  Propositions~\ref{61}, ~\ref{63} and Lemma~\ref{64}.  

  We remark that the same proof of Lemma~\ref{64}
  also shows the following generalization:
  \begin{prop}\label{645}
    Let $(X,\xi)$ be an (irreducible) variety over $\Fpbar$ together
    with an additional structure $\xi$ which is defined
    algebraically. If the automorphism group $\Aut_{\Fpbar}(X,\xi)$ is
    finite and assume that the Weil descent datum for $(X,\xi)$ is
    effective, then $(X,\xi)$ has a model defined over the field of
    moduli of $(X,\xi)$.  
  \end{prop}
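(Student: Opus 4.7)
The plan is to mimic the proof of Proposition~\ref{63}, substituting the polarized abelian variety $(A,\lambda)$ by the general pair $(X,\xi)$ and invoking the effectivity assumption in place of Weil's descent theorem~\ref{31}. Write $\mathbb{F}_q = \mathbb{F}_{p^a}$ for the field of moduli of $(X,\xi)$ and let $\sigma \in \Gal(\Fpbar/\mathbb{F}_q)$ denote the Frobenius. Since $(X,\xi)$ is of finite type over $\Fpbar$, there exists a model $(X_1,\xi_1)$ defined over some $\mathbb{F}_{q^c}$ with $a \mid c$. Using the finiteness of $\Aut_{\Fpbar}(X,\xi)$, I would enlarge $c$ if necessary so that every element of this finite group is already defined over $\mathbb{F}_{q^c}$, i.e., is fixed by $\sigma^c$.

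By the definition of field of moduli, there is an isomorphism $a_\sigma : (X,\xi) \simeq ({}^\sigma\! X,\,{}^\sigma\!\xi)$ over $\Fpbar$. Form
\begin{equation*}
b_\sigma := \sigma^{c-1}(a_\sigma) \circ \cdots \circ \sigma(a_\sigma) \circ a_\sigma,
\end{equation*}
which is an element of $\Aut_{\Fpbar}(X,\xi)$ since ${}^{\sigma^c}(X,\xi) = (X,\xi)$. Finiteness of $\Aut(X,\xi)$ gives $b_\sigma^m = 1$ for some $m \ge 1$, while $\sigma^c(b_\sigma) = b_\sigma$ by the enlargement of $c$. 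Expanding $b_\sigma^m = 1$ then yields
\begin{equation*}
\sigma^{mc-1}(a_\sigma) \circ \cdots \circ \sigma(a_\sigma) \circ a_\sigma = 1.
\end{equation*}
Defining $a_{\sigma^n} := \sigma(a_{\sigma^{n-1}}) \circ a_\sigma$ recursively, the family $\{a_{\sigma^n}\}_{n\ge 0}$ then satisfies the $1$-cocycle condition for the Galois extension $\mathbb{F}_{q^{mc}}/\mathbb{F}_q$.

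Finally I would invoke the assumption that the Weil descent datum for $(X,\xi)$ is effective to conclude that there are a model $(X',\xi')$ over $\mathbb{F}_q$ and an isomorphism $f : (X',\xi') \otimes_{\mathbb{F}_q} \Fpbar \simeq (X,\xi)$ with $a_\sigma = \sigma(f) \circ f^{-1}$; as in (\ref{eq:612}), this transports $\xi$ to a $\sigma$-invariant structure $\xi'$ on $X'$. The main obstacle, exactly as in Proposition~\ref{63}, is manufacturing the cocycle relation from the mere existence of a single isomorphism $a_\sigma$; this is precisely the step where the finiteness of $\Aut(X,\xi)$ is essential. Once the cocycle is in hand, the effectivity hypothesis plays the role previously played by quasi-projectivity via Theorem~\ref{31}.
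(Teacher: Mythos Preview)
Your proposal is correct and follows exactly the approach the paper intends: the paper itself states that ``the same proof of Proposition~\ref{63} also shows the following generalization,'' and you have carried out precisely that adaptation, replacing the finiteness of $\Aut(A,\lambda)$ by the hypothesis on $\Aut_{\Fpbar}(X,\xi)$ and replacing the appeal to Theorem~\ref{31} by the assumed effectivity of the Weil descent datum. The only cosmetic point is that the condition ``$a\mid c$'' is redundant once the model is already taken over $\F_{q^c}$.
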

   When $X$ is quasi-projective and $\xi$ is a polarization (an
   algebraic equivalence class of an ample line bundle) and/or a
   morphism in ${\rm Mor}(X^m, X^n)$, where $X^m=X\times_k \dots
   \times_k 
   X$ is the $m$-fold fiber product of $X$, the assumption of the Weil
   descent datum holds for $(X,\xi)$. 
   
\subsection{Proof of Theorem~\ref{13} (2)}
\label{sec:62}
Let $D$ be
the group scheme over $\Z$ representing the following functor 
\[ R\to (\End(E_0\otimes \Fpbar)\otimes R)^\times, \]
where $R$ is a commutative ring. In particular,
$D(\Qp)=(B_{p,\infty}\otimes \Qp)^\times$. 
We regard $D$ as a subgroup scheme of $G$ through the diagonal 
embedding. Let $Z$ be the center of $G$. 

\begin{lemma}\label{65} \

\begin{itemize}
  \item[(1)]  For any prime $\ell\neq p$, the subgroup $N_\ell$ of
      $G(\Q_\ell)$ which 
      normalizes the maximal order $\End(A_0\otimes \Fpbar)\otimes \Z_\ell$
      is 
      $Z(\Q_\ell)G(\Z_\ell)$.
  \item[(2)] The subgroup $N_p$ of $G(\Qp)$ which normalizes the
    maximal order 
    $\End(A_0\otimes \Fpbar)\otimes \Zp$ is $D(\Qp) G(\Zp)$.  
\end{itemize}
\end{lemma}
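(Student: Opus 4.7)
The plan is to first determine the normalizer of the maximal order inside the ambient group $\GL_1(\End(A_0\otimes \Fpbar)\otimes \Q_\ell)$ (which contains $G(\Q_\ell)$), and then cut out $N_\ell$ by imposing the similitude condition $x'x\in \Q_\ell^\times$.

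For part~(1), since $B_{p,\infty}$ splits at $\ell$, we have $\End_{\Fpbar}(A_0)\otimes \Z_\ell\cong M_{2g}(\Z_\ell)$, a maximal order in $M_{2g}(\Q_\ell)$. The normalizer of $M_n(\Z_\ell)$ in $\GL_n(\Q_\ell)$ is the well-known subgroup $\Q_\ell^\times\cdot \GL_n(\Z_\ell)$: an elementary Morita-type argument shows that every $M_n(\Z_\ell)$-stable $\Z_\ell$-lattice in $\Q_\ell^n$ is of the form $\ell^k \Z_\ell^n$ (it is determined by its ``first-column'' fractional $\Z_\ell$-ideal), so any normalizing element differs from an element of $\GL_{2g}(\Z_\ell)$ by a scalar. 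Writing $x=\lambda u$ with $\lambda\in Z(\Q_\ell)=\Q_\ell^\times$ and $u\in \GL_{2g}(\Z_\ell)$, the similitude condition $x'x=\lambda^2 u'u\in \Q_\ell^\times$ forces $u'u\in \Q_\ell^\times\cap \GL_{2g}(\Z_\ell)=\Z_\ell^\times$, hence $u\in G(\Z_\ell)$. The reverse inclusion $Z(\Q_\ell)\cdot G(\Z_\ell)\subset N_\ell$ is immediate.

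For part~(2), the local quaternion algebra $B_p:=B_{p,\infty}\otimes \Qp$ is a division algebra with unique maximal order $\O_p:=\O\otimes \Zp$ and a uniformizer $\Pi$, and $\End_{\Fpbar}(A_0)\otimes \Zp\cong M_g(\O_p)$. The key structural input is that every one-sided fractional $\O_p$-ideal in $B_p$ is of the form $\Pi^k\O_p$, so $\O_p$ behaves like a non-commutative discrete valuation ring. Running the same Morita-style argument, every $M_g(\O_p)$-stable right $\O_p$-lattice in $B_p^g$ has the form $(\Pi^k I)\O_p^g$, whence the normalizer of $M_g(\O_p)$ in $\GL_g(B_p)$ equals $B_p^\times\cdot \GL_g(\O_p)=D(\Qp)\cdot \GL_g(\O_p)$, with $D(\Qp)$ embedded as scalar matrices. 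For the similitude condition, writing $x=\lambda u$ with $\lambda\in D(\Qp)$ and $u\in \GL_g(\O_p)$, one has $x'x=\Nr(\lambda)\cdot u'u$; since $\Nr(\lambda)\in \Qp^\times$ automatically, the hypothesis $x'x\in \Qp^\times$ gives $u'u\in \Qp^\times\cap \GL_g(\O_p)=\Zp^\times$, so $u\in G(\Zp)$. The reverse inclusion $D(\Qp)\cdot G(\Zp)\subset N_p$ is again immediate.

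The main point requiring care is the structure theorem at $p$: recognizing that, despite the non-commutativity of $\O_p$, its one-sided fractional ideals are still totally ordered by $\Pi$-divisibility, so that the Morita reduction gives the clean description $\wt N_p=D(\Qp)\cdot \GL_g(\O_p)$. Once this is in hand, the rest of the argument reduces to routine bookkeeping with the Rosati involution.
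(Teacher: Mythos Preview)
Your proof is correct but follows a genuinely different route from the paper's. The paper works entirely inside $G(\Q_v)$: it uses the Iwasawa decomposition $G(\Q_v)=P(\Q_v)G(\Z_v)$ to replace an arbitrary normalizing element by an upper-triangular representative $g_v$, then tests the condition $g_v E_{ij} g_v^{-1}\in \calO_v$ against the elementary matrices $E_{ij}$ to force the diagonal entries of $g_v$ to share a common valuation, and finally (after absorbing that common factor into $H_v$) tests against $E_{ii}$ to force integrality of the remaining unipotent part. You instead lift the problem to the ambient unit group $\GL_{2g}(\Q_\ell)$ (resp.\ $\GL_g(B_p)$), identify the normalizer there via the Morita/lattice-stabilizer argument that every stable lattice is a uniformizer-power of the standard one, and only then impose the similitude condition $x'x\in\Q_v^\times$ to land back in $G$. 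Your approach is more structural and cleanly separates the two ingredients (normalizer in $\GL$, then Rosati condition), at the cost of invoking the classification of stable lattices; the paper's approach is more hands-on but stays inside $G$ throughout. The paper also remarks that the Cartan decomposition gives yet another route, which is closer in spirit to your argument since it too ultimately reduces to comparing valuations of diagonal entries.
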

\begin{proof}
  We sketch the proof; the omitted part is mere straightforward
  computation.  
  Put $H_\ell:=Z(\Q_\ell)G(\Z_\ell)$, for $\ell\neq p$, and $H_p:=D(\Qp)
  G(\Zp)$. It is clear that the group $H_v$ normalizes the order
  $\calO_v:=\End_{\Fpbar} (A_0)\otimes \Z_v$. It
  suffices to show that any element $\bar g$ in
  $N_v/H_v$ is the identity
  class. By the Iwasawa decomposition, we have
  $G(\Q_v)=P(\Q_v)G(\Z_v)$, where $P$ is the standard minimal
  parabolic subgroup over $\Q_v$. We may assume that a representative
  $g_\ell$ (resp.~$g_p$) is a upper triangular matrix in
  $G(\Q_\ell)\subset M_{2g}(\Q_\ell)$ (resp.~in $G(\Q_p)\subset
  M_{g}(B_{p,\infty}\otimes\Q_p)$. Let $E_{ij}$ denote the matrix in which   
  the $(i,j)$-entry is 1 and others are zero. It follows from    
  $g_v E_{i j} g_v^{-1}\in \calO_v$ for all $i,j$ (by looking at its
  $(i,j)$-entry) that the
  diagonal entries of $g_v$ have the same valuation. Modulo $H_v$, we
  may assume that $g_v$ lies 
  in the group of upper triangular unipotent matrices. 
  It follows from $g_v
  E_{ii} g_v^{-1}\in \calO_v$ for all $i$ that every entry of $g_v$ is
  integral and hence $g_v\in G(\Z_v)$. This shows the lemma. \qed 
\end{proof}

\begin{remark}\label{cartan}
  One can also use the Cartan decomposition to show Lemma~\ref{65}.
\end{remark}

Let $\wt U$ be the open subgroup of $G(\A_f)$ generated by the open
compact subgroup $U$ and the element $\pi_0$. Since $\pi_0$ normalizes
$U$ and $\pi_0^m\not\in U$ for $m\neq 0$, we have
\[ \wt U=\bigcup_{m\in \Z} U\pi_0^m, \quad \text{(disjoint)}. \]
Consider $G(\Q)$ as a subgroup of $G(\Q_v)$ for each place $v$, 
and identify $\pi_0$ with its image in $G(\Q_v)$. 
Note that $\pi_0 \in G(\Z_\ell)$ for
$\ell\neq p$, and $\pi_0\in D(\Q_p)$, which is also a uniformizer of
the division quaternion algebra $B_{p,\infty}\otimes\Q_p$. 
We have 
\[ \wt U=\wt U_p \times U^p, \quad \wt
U_p=\bigcup_{m\in \Z} G(\Z_p) \pi_0^m=D(\Q_p)G(\Z_p), \] 
where $U^p=\prod_{\ell\neq p} G(\Z_\ell)$.

\begin{cor}\label{66} Notations as above.
  The natural map 
  \[  G(\Q)\backslash G(\A_f)/\wt U \to \calT(G)=
  G(\Q)\backslash G(\A_f)/\grN\]
  is bijective.
\end{cor}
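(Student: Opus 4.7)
The plan is to show injectivity directly, since surjectivity is clear once one verifies $\wt U \subset \grN$ (which will be the first thing to check).

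First I would combine the two parts of Lemma~\ref{65} with the description of $\wt U$ just above the corollary to see that $\wt U\subset \grN$. At primes $\ell\neq p$, $\wt U_\ell=G(\Z_\ell)\subset Z(\Q_\ell)G(\Z_\ell)=N_\ell$; at $p$, the explicit formula $\wt U_p=D(\Q_p)G(\Z_p)$ coincides with $N_p$. This both makes the map in the corollary well-defined and makes it surjective.

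For injectivity, suppose $x,y\in G(\A_f)$ have the same class in $\calT(G)$, so $y=a x n$ with $a\in G(\Q)$ and $n\in \grN$. I would decompose $n$ place-by-place using Lemma~\ref{65}: write $n_p\in D(\Q_p)G(\Z_p)=\wt U_p$, and $n_\ell=z_\ell u_\ell$ with $z_\ell\in Z(\Q_\ell)$ and $u_\ell\in G(\Z_\ell)=\wt U_\ell$ for each $\ell\neq p$. Since $Z$ is central (the center of $M_g(B_{p,\infty})$ is $\Q$, so $Z\cong \Gm$), one can collect the $z_\ell$'s into a single element $z\in Z(\A_f)$ whose $p$-component is trivial, and write $n=z\cdot u$ with $u\in \wt U$.

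The key step is then to absorb $z$ into the $G(\Q)$-factor. Because $\Q$ has class number one, $\A_f^\times=\Q^\times\cdot\hat\Z^\times$; applied to $z$ (viewed as an element of $\A_f^\times$ through $Z\cong \Gm$) this gives $z=\zeta v$ with $\zeta\in \Q^\times=Z(\Q)$ and $v\in \hat\Z^\times\subset Z(\hat\Z)\subset U\subset \wt U$. Since $\zeta$ is central in $G(\Q)$, we obtain
\[
 y= a x z u = a x \zeta v u = (a\zeta)\, x\, (vu) \in G(\Q)\,x\,\wt U,
\]
so $[x]=[y]$ in $G(\Q)\backslash G(\A_f)/\wt U$, which gives injectivity.

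The only substantive step is the centrality rearrangement in the previous paragraph, which hinges on the fact that the ambient central simple algebra has center $\Q$; the rest is bookkeeping with Lemma~\ref{65} and the triviality of the ideal class group of $\Z$. I do not anticipate a real obstacle here: once Lemma~\ref{65} is in hand, the corollary is essentially a translation of the statement that $\grN=Z(\A_f)\cdot \wt U$ combined with $\A_f^\times=\Q^\times\hat\Z^\times$.
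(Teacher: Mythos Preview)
Your proof is correct and follows essentially the same approach as the paper's. The paper phrases the argument as a chain of equalities of double coset spaces---first using Lemma~\ref{65} to write $\grN=\wt U_p\times Z(\A_f^p)U^p$, then moving the central factor $Z(\A_f^p)$ to the left and absorbing it into $G(\Q)$ via $\A_f^\times=\Q^\times\hat\Z^\times$, and finally noting $Z(\hat\Z^p)\subset U^p$---while you unpack the same steps in the language of injectivity; the ingredients (Lemma~\ref{65}, $Z\cong\Gm$, class number one of $\Q$) are identical.
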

\begin{proof}
By Lemma~\ref{65},  we have 
\[ G(\Q)\backslash G(\A_f)/\grN=G(\Q)\backslash G(\A_f)/(\wt U_p\times
Z(\A_f^p) U^p). \]
The latter is equal
to 
\[ G(\Q)(\{1\}\times Z(\A_f^p))\backslash G(\A_f)/(\wt U_p\times
U^p)=G(\Q)(\{1\}\times Z(\hat \Z^p))\backslash G(\A_f)/\wt
U. \] 
Since $Z(\hat \Z^p)\subset U^p$, the latter is equal to
$G(\Q)\backslash G(\A_f)/\wt U$. This finishes the proof. Note that
the proof uses class number one of $Z$. \qed
\end{proof}

\def\pr{{\rm pr}}

\begin{thm}\label{67}
  The natural projection ${\rm pr}: G(\Q)\backslash
  G(\A_f)/U=\Lambda_g\to \calT(G)$ induces a bijection between the set of
  $\Gal(\F_{p^2}/\Fp)$-orbits of $\Lambda_g$ 
  the set $\calT(G)$.  
\end{thm}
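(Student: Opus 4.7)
The plan is to translate the Galois action into a purely group-theoretic right action on the double coset space and then match it with $\calT(G)$ using Corollary~\ref{66}. By the parametrization $\bfd:G(\Q)\backslash G(\A_f)/U\simeq \Lambda_g$ together with Theorem~\ref{11} (equivalently Proposition~\ref{52}), the action of the arithmetic Frobenius $\sigma_p$ on the class $[x]\in G(\Q)\backslash G(\A_f)/U$ is given by $[x]\mapsto[\pi_0 x\pi_0^{-1}]$. Since $\pi_0\in G(\Q)$ by Lemma~\ref{51}, the left factor $\pi_0$ is absorbed into the $G(\Q)$-coset, so
\[
\sigma_p[x]=[\pi_0 x\pi_0^{-1}]=[x\pi_0^{-1}].
\]
In other words, the Galois action on $\Lambda_g$ is equivalent to the action of the cyclic group $\langle\pi_0\rangle$ by right multiplication. (As a sanity check, $\pi_0^2=-p$ is central in $G$, so $\sigma_p^2$ acts trivially on double cosets; this is consistent with the fact that the action factors through $\Gal(\F_{p^2}/\Fp)$.)

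Next I would identify the quotient. The set of $\Gal(\F_{p^2}/\Fp)$-orbits in $\Lambda_g$ is now
\[
G(\Q)\backslash G(\A_f)/\langle U,\pi_0\rangle=G(\Q)\backslash G(\A_f)/\wt U,
\]
where $\wt U=\bigcup_{m\in\Z}U\pi_0^m$ is the open subgroup of $G(\A_f)$ appearing before Corollary~\ref{66}. By that corollary, the natural map $G(\Q)\backslash G(\A_f)/\wt U\to \calT(G)$ is a bijection. Composing with the quotient map $\Lambda_g\twoheadrightarrow \Lambda_g/\Gal(\F_{p^2}/\Fp)$ therefore yields the desired bijection, and inspection shows it coincides with the map induced by the natural projection $\pr$, which simply enlarges the level subgroup from $U$ to $\grN$.

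There is essentially no technical obstacle beyond the two inputs already in place: the explicit form of the Galois action (Theorem~\ref{11}), which makes $\sigma_p$ into a right translation by $\pi_0$, and the computation of the normalizer of $\End_{\Fpbar}(A_0)\otimes\hat\Z$ (Lemma~\ref{65}) used in Corollary~\ref{66}, which identifies $\wt U$-double cosets with $\calT(G)$. The only point requiring slight care is the rewriting $[\pi_0 x\pi_0^{-1}]=[x\pi_0^{-1}]$, which relies on $\pi_0\in G(\Q)$; once this is noted, the rest is a formal assembly of the two previous results.
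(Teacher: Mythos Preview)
Your proof is correct and follows essentially the same approach as the paper's: both translate the $\sigma_p$-action into right translation by $\pi_0$ on $G(\Q)\backslash G(\A_f)/U$ via Theorem~\ref{11} and Lemma~\ref{51}, and then invoke Corollary~\ref{66} to identify the resulting quotient $G(\Q)\backslash G(\A_f)/\wt U$ with $\calT(G)$. The only cosmetic difference is that the paper verifies explicitly that the fibers of $\pr$ are $\{[x],[x\pi_0]\}$ by writing $y=ax\pi_0^m u$ and reducing $m$ modulo $2$ (using $\pi_0^2=-p\in Z(\Q)$), whereas you phrase this as the identification of the orbit space with $G(\Q)\backslash G(\A_f)/\langle U,\pi_0\rangle$; these are the same argument.
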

\begin{proof}
  By Corollary~\ref{66}, the map $\Lambda_g\to \calT(G)$ is simply
  the projection map ${\rm pr}: G(\Q)\backslash G(\A_f)/U \to
  G(\Q)\backslash G(\A_f)/\wt U$, and the Frobenius
  $\sigma_p=\sigma_p^{-1}$ acts as $[x]\mapsto [x\pi_0]$ 
  for $x\in G(\A_f)$. Since
  $\pi_0$ normalizes $U$, we have $\pr([x])=\pr([x\pi_0])$. Suppose
  $\pr([x])=\pr([y])$ for $x, y\in G(\A_f)$. Then $y=a x \pi_0^m u$
  for some $m\in \Z$, $u\in U$ and $a\in G(\Q)$. Since $\pi_0^2=-p$ is
  in the center,
  we may assume $m=0$ or $1$. Then $[y]=[x\pi_0^m]$ for $m=0$ or
  $1$. This completes the proof. \qed  
\end{proof}

Let $\Lambda'_g:={\bf \Lambda}_g(\Fp)^c$ be the complement of 
${\bf \Lambda}_g(\Fp)$ in
$\Lambda_p$. Theorem~\ref{67} shows that 
\begin{equation}
  \label{eq:62}
  \frac{1}{2}|\Lambda'_g|+|{\bf \Lambda}_g(\Fp)|=T. 
\end{equation}
By $H=|\Lambda_g|=|\Lambda'_g|+|{\bf \Lambda}_g(\Fp)|$ and $\tr
R(\pi_0)=|{\bf \Lambda}_g(\Fp)|$, we get 
\[ \tr R(\pi_0)=2T-H. \]
Theorem~\ref{13} (2) is proved.

\begin{remark}\label{68} 
  We discuss a bit some relationship between Proposition~\ref{61} and
  Lemma~\ref{64}. We call a model $(A',\lambda')$ over $\Fp$ 
  of a member $(A,\lambda)$ in ${\bf \Lambda}_g(\Fp)$ {\it nearly canonical} if
  the set  $\qisom_{\F_{p^2}}(\ul A', \ul A_0)$ is non-empty, that is,
  the base change $(A',\lambda')\otimes_{\Fp} \F_{p^2}$ is the
  canonical model over $\F_{p^2}$ (Definition~\ref{62}). 
  We do not know whether or not any member $(A,\lambda)$ in 
  ${\bf \Lambda}_g(\Fp)$
  admits a model over $\Fp$ which is nearly canonical. The object 
  $(A,\lambda)$ admits a nearly canonical model over $\Fp$ if and
  only if one can choose an automorphism
  $a_{\sigma_p}:(A,\lambda) \simeq (\, ^{\sigma_p}\! A,\, 
  ^{\sigma_p} \!\lambda)$ 
  such that $\sigma_p(a_{\sigma_p})a_{\sigma_p}=1$.

  For $g=1$ and $g=2,3$, the set $\Lambda_1$ has only one member $E$
  as the class number of $B_{p,\infty}$ is one. Therefore, ${\bf
  \Lambda}_1(\Fp)=\Lambda_1$ and any supersingular elliptic curve $E_0$
  over $\Fp$ is a model of $E$. The Weil polynomial of $E_0$ is  
  $t^2+2$, or $t^2\pm 2t +2$ for $p=2$, and $t^2+2$, or $t^2\pm 3t +3$
  for $p=3$. 
  Thus, we conclude that a model over $\Fp$ of a member $(A,\lambda)\in
  {\bf \Lambda}_g(\Fp)$ is not unique in general, and hence is not
  necessarily to be nearly canonical.   

\end{remark}

\section{Variants with level structures}
\label{sec:07}

In this section, the ground field for abelian varieties, 
if not specified otherwise, is always $\Fpbar$. 

\subsection{}
\label{sec:71}

We keep the notation as in the previous sections.
Let $N$ be a prime-to-$p$ 
 positive integer. Let $U^p_N$ be the kernel of
 the reduction map $G(\hat \Z^{(p)})\to G(\Z/N\Z)$, 
 $U_p:=G(\Z_p)$, and $U_N:=U_p\times U^p_N$, where $\hat
 \Z^{(p)}:=\prod_{\ell\neq p} \Z_\ell$.    

 \begin{defn}\label{level}
  Let $(A,\lambda)$ be a member of $\Lambda_g$. A {\it level-$N$
  structure on $(A,\lambda)$ with respect to $\ul 
  A_0$} (over $\Fpbar$) is an isomorphism $\eta_N: A_0[N]\simeq A[N]$ 
  over $\Fpbar$ such
  that there exists an automorphism $\zeta\in
  \Aut_{\Fpbar}(\mu_N)=(\Z/N\Z)^\times$ such that
  \begin{equation}
    \label{eq:71}
    e_{\lambda}(\eta(x), \eta(y))=\zeta e_{\lambda_0}(x, y),\quad
    \forall\, x, y\in A_0[N], 
  \end{equation}
  where $e_{\lambda}:A[N]\times A[N]\to \mu_N$ denotes the Weil
  pairing induced by the polarization $\lambda$.
 \end{defn}

Denote by $\Lambda^*_{g,N}$ 
the set of
isomorphism 
classes of objects $(A,\lambda,\eta_N)$ over $\Fpbar$, where
\begin{itemize}
\item $(A,\lambda)\in \Lambda_g$, and 
\item $\eta_N$ is a level-$N$
  structure on $(A,\lambda)$ with respect to $\ul 
  A_0$.  
\end{itemize}
Two objects $(A,\lambda,\eta_N)$ and $(A',\lambda',\eta'_N)$ are
isomorphic, denoted as  $(A,\lambda,\eta_N)\simeq (A',\lambda',\eta'_N)$,
if there exists  an isomorphism $\varphi:A\to A'$ such that
$\varphi^*\lambda'=\lambda$ and $\varphi_* \eta_N=\eta'_N$. 
By an object $(A,\lambda,\eta_N)$ over a subfield $k_0\subset \Fpbar$
we mean that both $(A,\lambda)$ and $\eta_N: A_0[N]\simeq A[N]$ are defined over $k_0$. As noted in
Section~\ref{sec:01}, the main purpose of introducing
$\Lambda^*_{g,N}$ is to make a precise meaning in the
geometric side of (\ref{eq:16}).


For any prime $\ell$, let 
\[ 
\GIsom(\ul A_0(\ell),\ul A(\ell)) \]
denote the set of 
isomorphisms $\eta: A_0[\ell^\infty]\to A[\ell^\infty]$ (over
$\Fpbar$) such that
$\eta^* \lambda=\zeta \lambda_0$ for some $\zeta\in
\Z_\ell^\times$; it is a right $G(\Z_\ell)$-torsor. (The letter ``G''
stands for preserving the (quasi-)polarizations up to scalars. 
Compare the definitions $\qisom(\cdot,\cdot)$ in Section~\ref{sec:52}
and GIsog$(\cdot,\cdot)$ in Section~\ref{sec:72}.) 
For such $\eta$, one has  
\begin{equation}
  \label{eq:72}
  e_{\lambda}(\eta(x), \eta(y))=\zeta e_{\lambda_0}(x, y),\quad
    \forall\, x, y\in A_0[\ell^m],\ \forall\, m\ge 1\in \Z. 
\end{equation}
If $\eta$ is an element in $\prod_{\ell} \GIsom(\ul A_0(\ell),\ul
A(\ell))$, where $\ell$ runs over all primes in $\Q$, 
then the restriction $\eta|_{A_0[N]}$ of $\eta$ to $A_0[N]$ is a
level-$N$ structure on $(A,\lambda)$ with respect to $\ul
A_0$. Conversely, we have
 
\begin{lemma}\label{lift}
  Any level-$N$ structure $\eta_N$ on 
  $(A,\lambda)\in \Lambda_g$ with respect to $\ul A_0$ 
  can be lifted to an element
  $\eta$ in $\prod_{\ell} \GIsom(\ul A_0(\ell),\ul A(\ell))$. 
\end{lemma}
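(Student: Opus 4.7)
\textbf{Proof plan for Lemma~\ref{lift}.} The idea is to build $\eta = (\eta_\ell)_\ell$ prime by prime, exploiting that each local set $\GIsom(\ul A_0(\ell),\ul A(\ell))$ is a torsor under $G(\Z_\ell)$. Write $N = \prod_{\ell\neq p} \ell^{a_\ell}$, so that $\eta_N$ decomposes as the collection of its restrictions $\eta_{N,\ell} := \eta_N|_{A_0[\ell^{a_\ell}]}$ for $\ell \mid N$. For each prime $\ell$ I plan to produce $\eta_\ell \in \GIsom(\ul A_0(\ell),\ul A(\ell))$ whose reduction on $\ell^{a_\ell}$-torsion equals $\eta_{N,\ell}$ (for $\ell \mid N$), or is arbitrary (for $\ell \nmid N$, including $\ell = p$).

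First I check that $\GIsom(\ul A_0(\ell),\ul A(\ell))$ is non-empty for every~$\ell$. For $\ell \neq p$, the Tate modules $T_\ell(A_0)$ and $T_\ell(A)$ are free $\Z_\ell$-modules of rank $2g$ equipped with perfect symplectic Weil pairings coming from the principal polarizations; the classification of symplectic $\Z_\ell$-lattices provides an isomorphism preserving the pairing exactly. For $\ell = p$, both $\ul A_0(p)$ and $\ul A(p)$ are principally (quasi-)polarized superspecial $p$-divisible groups of height $2g$, and such objects are unique up to isomorphism---this is precisely the fact that underlies the parametrization (\ref{eq:11}). In each case right precomposition gives $\GIsom(\ul A_0(\ell),\ul A(\ell))$ the structure of a $G(\Z_\ell)$-torsor.

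Next, the restriction-to-$\ell^{a_\ell}$-torsion map
\[
  \GIsom(\ul A_0(\ell),\ul A(\ell)) \longrightarrow
  \GIsom\bigl(A_0[\ell^{a_\ell}], A[\ell^{a_\ell}]\bigr)
\]
is equivariant with respect to the reduction homomorphism $G(\Z_\ell) \to G(\Z/\ell^{a_\ell}\Z)$, and both sides are torsors for these groups. Since $\ell \neq p$, the supersingular endomorphism ring satisfies $\End_{\Fpbar}(A_0)\otimes \Z_\ell \simeq M_{2g}(\Z_\ell)$, so $G \otimes \Z_\ell \simeq \GSp_{2g,\Z_\ell}$ is smooth; consequently $G(\Z_\ell) \to G(\Z/\ell^{a_\ell}\Z)$ is surjective, and hence so is the restriction map above. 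Lifting $\eta_{N,\ell}$ through this map produces the desired $\eta_\ell$. The scalar condition in (\ref{eq:71}) takes care of itself: if $\eta_{N,\ell}$ has Weil-pairing multiplier congruent to $\zeta$ mod $\ell^{a_\ell}$, then any lift automatically has multiplier in $\Z_\ell^\times$ reducing to this value, since the definition of $G$ already incorporates the multiplier character.

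For $\ell \nmid N$ (in particular $\ell = p$) I simply pick any $\eta_\ell \in \GIsom(\ul A_0(\ell),\ul A(\ell))$ furnished by the first step. Assembling, $\eta := (\eta_\ell)_\ell$ is an element of $\prod_\ell \GIsom(\ul A_0(\ell),\ul A(\ell))$ whose restriction to $A_0[N]$ equals $\eta_N$ by construction. The only substantive ingredients are the existence of $\GIsom$-elements at $\ell = p$ (resting on the superspecial classification) and the smoothness of $G$ at $\ell \neq p$; the remainder is formal torsor-lifting, and I expect no genuine obstacle.
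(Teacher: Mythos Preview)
Your proposal is correct and follows essentially the same strategy as the paper: reduce to one prime $\ell\neq p$ at a time, use that $\GIsom(\ul A_0(\ell),\ul A(\ell))$ is a $G(\Z_\ell)$-torsor, and invoke smoothness of $G$ over $\Z_\ell$ to lift through the surjection $G(\Z_\ell)\twoheadrightarrow G(\Z/\ell^{a_\ell}\Z)$. The only difference is emphasis: where you assert that the finite-level set $\GIsom(A_0[\ell^{a_\ell}],A[\ell^{a_\ell}])$ is a $G(\Z/\ell^{a_\ell}\Z)$-torsor, the paper pauses to justify this by checking that the natural map $\End(A_0(\ell))\otimes\Z/\ell^m\Z\to\End(A_0[\ell^m])$ is an isomorphism (which is immediate from $T_\ell(A_0)$ being a free $\Z_\ell$-module), so that $G(\Z/\ell^m\Z)$ really is the similitude group of $A_0[\ell^m]$; your identification $G\otimes\Z_\ell\simeq\GSp_{2g,\Z_\ell}$ accomplishes the same thing.
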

\begin{proof}
  We may assume that $N=\ell^m$ is a power of $\ell$ and show that
  $\eta_N$ can be lifted in $\GIsom(\ul A_0(\ell),\ul
  A(\ell))$, where $\ell\neq p$. 
  Since $\ul A_0(\ell)$ is isomorphic to $\ul A(\ell)$, we
  may also assume that $A=A_0$. We have $\End(A_0)\otimes
  \Z_\ell\simeq \End(A_0(\ell))$, and hence 
  $G(\Z_\ell)=\GIsom(\ul A_0(\ell),\ul A_0(\ell))$. On the other
  hand, the group $G(\Z/\ell^m\Z)$ consists of elements $\bar \varphi\in
  \End(A_0(\ell))\otimes (\Z/\ell^m\Z)$ such that $\bar \varphi' \bar
  \varphi=\zeta \in (\Z/\ell^m\Z)^\times$. We shall show the natural map
  $\End(A_0(\ell))\otimes \Z/\ell^m \Z \to \End(A_0[\ell^m])$ is an
  isomorphism. It then follows that $G(\Z/\ell^m\Z)$ is isomorphic to the
  group of elements $\eta\in \End(A_0[\ell^m])$ such that $\eta^*
  e_{\lambda_0}=\zeta e_{\lambda_0}$ for some $\zeta\in
  (\Z/\ell^m\Z)^\times$. It follows from the smoothness of $G\otimes
  \Z_\ell$ (as $G\otimes\Z_\ell\simeq \GSp_{2g}$) that
  the reduction map $G(\Z_\ell)\to G(\Z/\ell^m \Z)$ is
  surjective. Therefore, the element $\eta$ can be lifted to an
  element $\varphi \in \GIsom(\ul A_0(\ell),\ul A_0(\ell))$.

  Since $\ell\neq p$, we have
  $\End(A_0(\ell))=\End(T_\ell(A_0))$. Since $T_\ell(A_0)$ is a finite
  free $\Z_\ell$-module, we have 
\[ \End(T_\ell(A_0))\otimes
  \Z/\ell^m\Z=\End(T_\ell(A_0)/\ell^m
  T_\ell(A_0))=\End(A_0[\ell^m]). \]
This proves the isomorphism $\End(A_0(\ell))\otimes \Z/\ell^m \Z \simeq
  \End(A_0[\ell^m])$ and hence the lemma. \qed  
\end{proof}

By Lemma~\ref{lift}, each level-$N$ structure $\eta_N$ on $(A,\lambda)$
uniquely determines a $U_N$-orbit 
\[ [\eta]:=\left \{\eta\in \prod_{\ell} \GIsom(\ul A_0(\ell),\ul
A(\ell))\, \Big| \, \eta|_{A_0[N]}=\eta_N\right \} \]
in the $G(\hat \Z)$-torsor 
$\prod_{\ell} \GIsom(\ul A_0(\ell),\ul A(\ell))$.  

\begin{remark}\label{remark73}
  (1) One can define the notion of level-$N$ structure on an object
  $(A,\lambda)$ in $\Lambda_g$ with respect to $\ul A_0$ in the same
  way as Definition~\ref{level}
  without the assumption 
  $(p,N)=1$. However, Lemma~\ref{lift} fails if $p\,|N$ because the
  natural map $\End(A_0(p))\otimes \Z/p^m\Z\to \End(A_0[p^m])$ is not
  an isomorphism. For example, let $E$ be a supersingular elliptic
  curve, then 
\[ \End(E[p])\simeq \left \{
\begin{pmatrix}
  a & 0 \\ b & a^p
\end{pmatrix}\Big |\,  a\in \F_{p^2}, b\in \Fpbar\right \} \]
while 
\[ \End(E(p))\otimes \Z/p\Z \simeq \left \{
\begin{pmatrix}
  a & 0 \\ b & a^p
\end{pmatrix}\Big|\,  a, b\in \F_{p^2}\right \}. \]


(2) For any positive integer $N$, we call an element 
\[ [\eta]\in
\left [\prod_{\ell} \GIsom(\ul A_0(\ell),\ul A(\ell))\right
]/U_N \]  
an {\it $(\ul A_0,U_N)$-level structure on the object $(A,\lambda)$}
(see \cite[Section 2.2]{yu:smf}). This is a better notion of
level-$N$ structure on $\ul A$. For any subfield $k_0$ of $\Fpbar$, 
an {\it object $(A,\lambda,[\eta])$ over $k_0$} 
is defined to be an superspecial 
principally polarized abelian variety $(A,\lambda)$ over
$k_0$ together with an $(\ul A_0, U_N)$-level structure $[\eta]$ on 
$(A,\lambda)\otimes \Fpbar$ which is invariant under the 
$\Gal(\Fpbar/k_0)$-action. One can prove that if the isomorphism class
of an objective $(A,\lambda,[\eta])$ over $\Fpbar$ is defined over
$k_0$, then $(A,\lambda,[\eta])$ admits a model
$(A',\lambda',[\eta'])$ over $k_0$. The proof is similar to those of
Lemma~\ref{74} and of Theorem~\ref{75} (1).  
\end{remark}

\subsection{}
\label{sec:72}
For each object $\ul A=(A,\lambda)\in \calA_g(\Fpbar)$, we write 
\[ T^{(p)}(A):= \prod_{\ell\neq p} T_\ell(A) \]
for the prime-to-$p$ Tate module of $A$, and
$V^{(p)}(A):=T^{(p)}(A)\otimes_{\hat \Z^{(p)}} \A_f^p$, 
where $\A_f^p$
  is the prime-to-$p$ finite adele ring of $\Q$. Let
\[ \<\,,\>_\lambda:V^{(p)}(A)\times V^{(p)}(A)\to
\A_f^p(1):=V^{(p)}(\Gm) \] 
   be the
  induced non-degenerate alternating pairing, for which $T^{(p)}(A)$
  is a self-dual $\hat \Z^{(p)}$-lattice. For brevity, we write
  $V^{(p)}(\ul A):=(V^{(p)}(A),\<\,,\>_\lambda)$.  We introduce some
  more notation.\\

\npr {\it Notation.} (1)
  For any two objects
  $\ul A, \ul A'$ in $\calA_g(\Fpbar)$, we denote by 
  \[ \GIsom(V^{(p)}(\ul A),V^{(p)}(\ul A'))\] the set of
  isomorphisms $\eta: V^{(p)}(A)\to V^{(p)}(A')$ such that there exists an
  automorphism $\zeta\in \Aut(\A_f^p(1))=(\A_f^p)^\times$ such
  that 
  \begin{equation}
    \label{eq:73}
    \< \eta(x), \eta(y)\>_{\lambda'}=\zeta \< \eta(x),
    \eta(y)\>_{\lambda}, \quad \forall\, x,y \in V^{(p)}(A).
  \end{equation}
The letter``G'' stands for preserving polarizations up to scalars.

\def\Qisog{\text{GIsog}^{(p)}}

(2) For any field $k$ and two objects $\ul A_1=(A_1,\lambda_1)$ and $\ul
A_2=(A_2,\lambda_2)$ in $\calA_g(k)$, denote by $\Qisog_k(\ul A_1,
\ul A_2)$ the set of prime-to-$p$ quasi-isogenies $\varphi:A_1\to A_2$
over $k$ such that $\varphi^*\lambda_2= q \lambda_1$ for some $q\in
\Z_{(p),+}^\times$, where  $\Z_{(p),+}^\times\subset \Z_{(p)}^\times $
denotes the subset consisting of all positive elements.  \\

Let $\Lambda^{(p)}_{g,N}$ 
denote the set
of equivalence classes of objects $(A,\lambda,[\eta]^p)$ over
$\Fpbar$, where $(A,\lambda)\in \Lambda_g$ 
and $[\eta]^p$ is an element in
$\GIsom(V^{(p)}(\ul A_0),V^{(p)}(\ul A))/U^p_N$. Two objects  
$(A,\lambda,[\eta]^p)$ and $(A',\lambda', [\eta']^p)$ are
equivalent, denoted as  $(A,\lambda,[\eta]^p)\sim (A',\lambda',[\eta']^p)$,
if there is a quasi-isogeny $\varphi\in \Qisog_{\Fpbar}((A,\lambda),
(A',\lambda'))$ such that $\varphi_* [\eta]^p= [\eta']^p$. 

There is a natural map $f:\Lambda^*_{g,N}\to \Lambda^{(p)}_{g,N}$
which sends each object $(A,\lambda,\eta_N)$ to 
$(A,\lambda, [\eta]^p)$, where $[\eta]^p$ is 
the class of maps $\eta$ on $\prod_{\ell\neq p} A_0(\ell)$
whose restriction to $A_0[N]$ is
equal to $\eta_N$, 
as we have the identification  
\begin{equation*}
  \begin{split}
    \prod_{\ell\neq p}\GIsom(\ul A_0(\ell),\ul A(\ell))
    &=\GIsom (T^{(p)}(\ul A_0),T^{(p)}(\ul A)) \\
    &\subset \GIsom (V^{(p)}(\ul A_0),V^{(p)}(\ul A)).
  \end{split}
\end{equation*}

\begin{thm}\label{73} \
\begin{itemize}
  \item[(1)] The natural map $f:\Lambda^*_{g,N}\to
    \Lambda^{(p)}_{g,N}$ is bijective and compatible with the action
    of the Galois group $\calG$.
  \item [(2)] There is a natural bijective map $\bfc_N:
    \Lambda^{(p)}_{g,N}\to 
    G(\Q)\backslash G(\A_f)/U_N$ for which the base point
    $(A_0,\lambda_0, [\id])$ is sent to the identity class $[1]$
    and $\bfc_N$ is $\calG$-equivariant. 
\end{itemize}
\end{thm}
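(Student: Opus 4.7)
\proof
The plan is to establish (1) directly from the definitions together with Lemma~\ref{lift}, and then to derive (2) by refining the parametrization of Proposition~\ref{53}.

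For (1), I first verify that $f$ is well-defined: any isomorphism $\varphi:(A,\lambda,\eta_N)\simeq(A',\lambda',\eta'_N)$ in $\Lambda^*_{g,N}$ gives, after lifting $\eta_N$ and $\eta'_N$ via Lemma~\ref{lift}, a prime-to-$p$ quasi-isogeny carrying $[\eta]^p$ to $[\eta']^p$ modulo $U^p_N$. For injectivity, suppose $(A,\lambda,[\eta]^p)\sim(A',\lambda',[\eta']^p)$ via a prime-to-$p$ quasi-isogeny $\varphi$ with $\varphi^*\lambda'=q\lambda$, $q\in \Z_{(p),+}^\times$, and $\varphi_*[\eta]^p=[\eta']^p$. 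Since both $[\eta]^p$ and $[\eta']^p$ admit integral representatives in $\GIsom(T^{(p)}(\ul A_0),T^{(p)}(\ul A))$ and $\GIsom(T^{(p)}(\ul A_0),T^{(p)}(\ul A'))$ respectively, $\varphi$ carries $T^{(p)}(A)$ onto $T^{(p)}(A')$ and is therefore an isomorphism at every $\ell\neq p$; it is already an isomorphism at $p$ by definition of prime-to-$p$ quasi-isogeny. Hence $\varphi$ is an isomorphism of abelian varieties, and principality of both polarizations together with $q\in \Z_{(p),+}^\times$ forces $q=1$; restricting to the $N$-torsion yields the required isomorphism in $\Lambda^*_{g,N}$. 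For surjectivity, given $(A,\lambda,[\eta]^p)$ with representative $\eta$, the $\hat\Z^{(p)}$-lattice $\eta(T^{(p)}(\ul A_0))\subset V^{(p)}(A)$ determines a prime-to-$p$ isogeny $\varphi:A\to A'$ such that $\varphi_*\eta\in \GIsom(T^{(p)}(\ul A_0),T^{(p)}(\ul A'))$; I then transport $\lambda$ to a principal polarization $\lambda'$ on $A'$ (rescaling by the pairing scalar of $\eta$) and set $\eta'_N:=(\varphi_*\eta)|_{A_0[N]}$. Galois equivariance is immediate from the naturality of all these constructions.

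For (2), I follow the pattern of Proposition~\ref{53}. Given $(A,\lambda,[\eta]^p)$, I pick a representative $\eta=(\eta_\ell)_{\ell\neq p}$, a prime-to-$p$ quasi-isogeny $\phi\in \qisom_{\Fpbar}(\ul A,\ul A_0)$, and an isomorphism $\alpha_p \in \GIsom(\ul A_0(p),\ul A(p))$, and set $\xi_\ell:=\phi\circ \eta_\ell\in G(\Q_\ell)$ for $\ell\neq p$ and $\xi_p:=\phi\circ \alpha_p\in G(\Q_p)$. Replacing $\phi$ by $a\phi$ with $a\in G(\Q)$ left-multiplies every $\xi_\ell$ by $a$; replacing $\alpha_p$ by $\alpha_p u_p$ with $u_p\in G(\Z_p)$ right-multiplies $\xi_p$ by $u_p$; and replacing $\eta$ by $\eta\cdot u^p$ with $u^p\in U^p_N$ right-multiplies $(\xi_\ell)_{\ell\neq p}$ by $u^p$. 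Hence the class of $(\xi_\ell)_\ell$ in $G(\Q)\backslash G(\A_f)/U_N$ is well-defined and yields $\bfc_N$. An inverse is read off from Proposition~\ref{53}: given $(\xi_\ell)_\ell$, obtain $(A,\lambda)\in \Lambda_g$ together with $\phi,\alpha_\ell$ satisfying $\xi_\ell=\phi\circ\alpha_\ell$, and set $[\eta]^p:=[(\alpha_\ell)_{\ell\neq p}]$; by part (1) this lies in $\Lambda^{(p)}_{g,N}$. The base point is recovered by taking $\phi=\alpha_\ell=\id$.

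Galois equivariance of $\bfc_N$ then reduces, via (1) and Theorem~\ref{11}, to tracking the Frobenius action on the chosen data. Under $\sigma_p$, the object $(A,\lambda,[\eta]^p)$ becomes $({}^{\sigma_p}\!A,{}^{\sigma_p}\!\lambda,{}^{\sigma_p}\![\eta]^p)$ with $\phi\mapsto {}^{\sigma_p}\!\phi$, $\eta_\ell\mapsto {}^{\sigma_p}\!\eta_\ell$, $\alpha_p\mapsto {}^{\sigma_p}\!\alpha_p$. The identification ${}^{\sigma_p}\!A_0\simeq A_0$ (valid since $A_0$ is defined over $\Fp$) is realized on the relevant Tate modules and $p$-divisible group by the relative Frobenius $\pi_0$, via Proposition~\ref{43}, and the resulting conjugation replaces each $\xi_\ell$ by $\pi_0\xi_\ell\pi_0^{-1}$ --- precisely the formula of Theorem~\ref{11}. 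The main obstacle is exactly this bookkeeping at the level of trivializations: one must check that the identifications used to pull ${}^{\sigma_p}$-twisted data back to the fixed base point $\ul A_0$ are consistent at every prime (including $p$) with the adelic Frobenius formula, which ultimately rests on Proposition~\ref{43}(3) and the parallel $p$-adic statement of Proposition~\ref{43}(2).
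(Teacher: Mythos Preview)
Your argument is correct and follows essentially the same strategy as the paper: Tate's theorem to realize lattices by abelian varieties for surjectivity in (1), and tracking quasi-isogeny plus level data for the double-coset description and Galois equivariance in (2). The only cosmetic difference is that in (2) you build the full adelic representative by also choosing $\alpha_p$ at $p$, whereas the paper works entirely away from $p$ (sending $(A,\lambda,[\eta]^p)$ to $[\varphi\circ\eta]^p$ in $G(\Z_{(p)})\backslash G(\A_f^p)/U_N^p$ for $\varphi\in\Qisog_{\Fpbar}(\ul A,\ul A_0)$) and then invokes the isomorphism $G(\Z_{(p)})\backslash G(\A_f^p)/U_N^p\simeq G(\Q)\backslash G(\A_f)/U_N$.
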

\begin{proof}
  (1) Let $(A,\lambda,\eta_N)$ and $(A',\lambda',\eta'_N)$ be two
      objects in $\Lambda_{g,N}^*$ such that $(A,\lambda,[\eta]^p)
      \sim(A',\lambda',[\eta']^p)$. Then there is a prime-to-$p$
      quasi-isogeny $\varphi: A\to A'$ such that $\varphi^*\lambda'=q
      \lambda$ for some $q \in \Z_{(p),+}^\times$ such that $[\varphi 
      \eta]^p=[\eta']^p$. We may assume $\eta'=\varphi \eta$. As
      $\eta' \eta^{-1}$ maps $T^{(p)}(A)$ onto  $T^{(p)}(A')$, 
      the map $\varphi=\eta' \eta^{-1}$
      induces an isomorphism from $T^{(p)}(A)$ to
      $T^{(p)}(A')$. Therefore, $v_\ell(q)=0$ for all $\ell\neq p$, and
      hence we have $\varphi^*\lambda'=\lambda$ and $\varphi
      \eta_N=\eta_N'$. This shows the injectivity. 

      We show the surjectivity. Let $(A,\lambda,[\eta]^p)$ be an
      object in $\Lambda^{(p)}_{g,N}$, where $\eta\in
      \GIsom(V^{(p)}(\ul A_0),V^{(p)}(\ul A))$. Let $\zeta\in
      (\A_f^p)^\times$ such that $\eta^* \<\,, \>_\lambda= \zeta
      \<\,,\>_{\lambda_0}$. Choose a positive number $\alpha\in
      \Z_{(p)}^\times$ so that $\alpha \zeta\in (\hat
      \Z^{(p)})^\times$. Choose a prime-to-$p$ quasi-isogeny $\varphi$
      on $A$ such that $\varphi^*\lambda=\alpha \lambda$. Then
      $(A,\lambda,[\eta]^p)\sim (A,\lambda,[\varphi \eta]^p)$. 
      Replacing $\eta$ by $\varphi \eta$, we may assume that
      $\zeta\in (\hat \Z^{(p)})^\times$. Let $L:=\eta(T^{(p)}(
      A_0))\subset V^{(p)}(A))$ be the image of $T^{(p)}(A_0)$ under
      $\eta$. By a theorem of Tate, there are a
      abelian variety $A'$ and a prime-to-$p$ quasi-isogeny
      $\varphi':A'\to A$ such that the map $\varphi'$ induces an
      isomorphism 
\[ T^{(p)}(A')\stackrel{\varphi'}{\longrightarrow}T^{(p)}(A) \simeq
      L\subset V^{(p)}(A); \] 
      the pair $(A',\varphi')$ is uniquely determined up to
      isomorphism by 
      this property. Let $\lambda':=\varphi'^* \lambda$, considered as
      an element in $\Hom(A',(A')^t)\otimes \Z_{(p)}$; one has
      $\<\,,\>_{\lambda'}=\varphi'^* \<\,,\>_\lambda$. We have the
      following diagram:
\[ \xymatrix{
     & T^{(p)}(A_0),\zeta \<\,,\>_{\lambda_0} \ar[d]^\eta
       \ar[dl]_{\varphi'^{-1} \eta} \\
  T^{(p)}(A'),\<\,,\>_{\lambda'} \ar[r]^{ \varphi'} & L,\<\,,\>_{\lambda} 
  } \]
      It follows from 
      $\varphi'^{-1} \circ \eta\in
      \GIsom(T^{(p)}(\ul A_0),T^{(p)}(\ul A))$ that $\lambda'$ is a
      principal polarization. Then $(A,\lambda,[\eta]^p)\sim
      (A',\lambda', [\varphi'^{-1}\circ \eta]^p)$ and the latter comes
       from an element in 
      $\Lambda^*_{g,N}$. This shows the surjectivity. 
      It is obvious that the map $f$ is compatible
      with the action of $\calG$. 

  (2) We define the map $\bfc_N$. Given an object
      $(A,\lambda,[\eta]^p)$ in $\Lambda^{(p)}_{g,N}$, there is a
      prime-to-$p$ quasi-isogeny $\varphi: A\to A_0$ such that
      $\varphi^* \lambda_0=q \lambda$ for some $q \in
      \Z_{(p),+}^\times$. Then $[\varphi \eta]^p\in G(\A^p_f)/U^p_N$. If
      $\varphi'$ is another such a morphism, the $\varphi'=a \varphi$
      for some $a\in G(\Z_{(p)})$. Then the map $(A,\lambda,
      [\eta]^p)\mapsto [\varphi \eta]^p$ induces a well-defined map,
      denoted by $\bfc^p_N$, from 
      $\Lambda^ {(p)}_{g,N}$ to $G(\Z_{(p)})\backslash G(\A_f^p)/U^p_N$. 
      Using the isomorphism
      $G(\Z_{(p)})\backslash G(\A_f^p)/U^p_N\simeq G(\Q)\backslash
      G(\A_f)/U_N$, we get a map 
\[ \bfc_N: \Lambda^ {(p)}_{g,N}\to 
      G(\Q)\backslash G(\A_f)/U_N. \]
      We need to show that the map $\bfc^p_N$ is bijective and
      $\calG$-equivariant. Let $\ul A=(A,\lambda,[\eta]^p)$ and
      $\ul A'=(A',\lambda',[\eta']^p)$ be two objects in
      $\Lambda^{(p)}_{g,N}$, and let $\varphi$, $\varphi'$ be
      prime-to-$p$ quasi-isogenies to $A_0$, respectively. Suppose
      that $[\varphi \eta]^p=[\varphi' \eta']^p$, that is
      $\bfc^p_N(\ul A)=\bfc^p_N(\ul A')$. Then the morphism
      $a:=\varphi'^{-1} \varphi: A\to A'$ is a prime-to-$p$
      quasi-isogeny such that $a^* \lambda'\in
      \Z_{(p),+}^{\times}\lambda$ and $a_* [\eta]^p =[\eta']^p$. This
      shows the injectivity. 

      We show the surjectivity. Let $[\phi]\in G(\Z_{(p)})\backslash
      G(\A_f^p)/U^p_N$ be a class, where $\phi\in G(\A^p_f)$. Let
      $\zeta:= \nu(\phi)\in (\A^p_f)^\times$. Replacing $\phi$ by $a\phi$
      for a suitable $a\in G(\Q)$, we may assume that $\zeta\in (\hat
      \Z^{(p)})^\times$. Let $L:=\phi(T^{(p)}(A))\subset
      V^{(p)}(A)$ be the image of $T^{(p)}(A)$ under $\phi$. 
      By a theorem of Tate, there exist an abelian variety
      $A$ and a prime-to-$p$ quasi-isogeny $\varphi:A\to A_0$ such
      that $\varphi: T^{(p)}(A) \simeq L\subset V^{(p)}(A)$. As
      $\varphi$ is a prime-to-$p$ quasi-isogeny, $A$ is superspecial. Put
      $\lambda:=\varphi^* \lambda_0$, considered as
      an element in $\Hom(A,A^t)\otimes \Z_{(p)}$. We have an
      isomorphism $\eta:=\varphi^{-1} \phi:
      (T^{(p)}(A_0),\<\,,\>_{\lambda_0})\simeq
      (T^{(p)}(A),\zeta \<\,,\>_{\lambda})$. 
      This shows that $\lambda$ is a
      principal polarization. Then we get an object
      $(A,\lambda,[\eta]^p)$ and this is sent to the class $[\phi]$ by
      the construction. 

      We check the compatibility with the Galois action. Let $\phi\in
      G(\A_f^p)$ and $\ul A=(A,\lambda,[\eta]^p)\in \Lambda^{(p)}_{g,N}$ be
      the element such that $\bfc_N^p(\ul A)=[\phi]$. Then there exist an
      element $\varphi\in \Qisog_{\Fpbar}( \ul A, \ul A_0)$
      and an element $\eta\in [\eta]^p$ such that $\phi=\varphi\circ
      \eta$. Applying any element $\sigma$ in 
      $\calG$, we get 
      \[ \sigma(\phi)\in \Qisog_{\Fpbar}( ^\sigma\! \ul A, \ul
      A_0), \quad \sigma(\eta) \in \GIsom(V^{(p)}(\ul
      A_0),V^{(p)}(^\sigma\! \ul A)), \] 
      and $\sigma(\phi)=\sigma(\varphi)\circ \sigma(\eta)$.
      This yields ${\bfc^p_N}(^\sigma\!\ul A)=[\sigma (\phi)]$. \qed
\end{proof}

\begin{lemma}\label{74} 
  Every member $(A,\lambda,\eta_N)\in \Lambda^*_{g,N}$ has
  a unique model $(A',\lambda',\eta_N')$, up to isomorphism,  
  over $\F_{p^2}$ such that there exists a prime-to-$p$ quasi-isogeny
  $\varphi:A'\to A_0\otimes \F_{p^2}$ such that
  $\varphi^*\lambda_0\in \Z_{(p),+}^\times \cdot\lambda'$.
\end{lemma}
\begin{proof}
  By Proposition~\ref{61}, $(A,\lambda)$ has a unique model
      $(A',\lambda')$ over $\F_{p^2}$ with the property. Since the
      Frobenius endomorphism $\pi_{A'}$ of $A'$ over $\F_{p^2}$ 
      is $-p$, the
      pull-back level structure $\eta_N'$ is defined over
      $\F_{p^2}$.\qed 
\end{proof}

As Definition~\ref{62}, 
we call the unique model over $\F_{p^2}$ in Lemma~\ref{74} the
{\it canonical model} over $\F_{p^2}$. By Lemma~\ref{74}, 
we may identify the set
$\Lambda_{g,N}^*$ with the set of $\F_{p^2}$-isomorphism classes of
superspecial 
$g$-dimensional principally polarized abelian varieties
$\ul A=(A,\lambda,\eta_N)$ over $\F_{p^2}$ with level-$N$ 
structure with respect to $\ul A_0$  
such that the set $\Qisog_{\F_{p^2}}(\ul A,\ul A_0)$ 
is non-empty. \\

For any prime-to-$p$ positive integers $N|M$, we have a natural
projection $\Lambda^*_{g,M}\to \Lambda^*_{g,N}$. 
Let 
\[ \wt\Lambda^*_g:=(\Lambda^*_{g,N})_{p\nmid N}\] 
be the tower of all superspecial loci with prime-to-$p$ level
structures. Elements of $\wt\Lambda^*_g$ can be represented as
$(A,\lambda,\wt \eta)$, where $(A,\lambda)$ is an element in
$\Lambda_g$ and $\wt \eta\in \GIsom(T^{(p)}(\ul A_0),
T^{(p)}(\ul A))$ is a trivialization. 
It follows from Theorem~\ref{73} that 
the tower  $\wt\Lambda^*_g$
admits a right action of $G(\A^p_f)$ and we have a natural isomorphism
\begin{equation}
  \label{eq:74}
  \bfd^p: G(\Q)\backslash G(\A_f)/G(\Z_p)\simeq \wt \Lambda^*_g
\end{equation}
of pointed profinite sets which is compatible with the actions of
$\Gal(\F_{p^2}/\Fp)$ from the left, and of $G(\A_f^p)$ from the right. 

\subsection{The Hecke operator $R(\pi_0)$ and type number}
\label{sec:73}

We define the operator $R(\pi_0)$ and the type number with
level structure, which are almost identical with those in Section 1. 

Let $M_0(U_N)$ the vector space of functions $f:G(\A_f)\to \C$
satisfying $f(axu)=f(x)$ for all $a\in G(\Q)$ and $u\in U_N$.   
Let $\calH(G,U_N)$ be the Hecke algebra of bi-$U_N$-invariant
functions, which acts
on the space $M_0(U_N)$ in the same way as (\ref{eq:14})
but the Haar measure takes volume one on $U_N$. 
Let $R(\pi_0)$ be the operator corresponding to the double coset
$U_N(\pi_0):=U_N\pi_0=\pi_0 U_N$. 

Let $\calT_N$ be the double coset space 
\[ \calT_N:=G(\Q)\backslash G(\A_f)/\grN_N, \]
where $\grN_N$ is the (open) subgroup of $G(\A_f)$ consisting of
elements $x$ such that 
\begin{itemize}
\item [(1)] $\Int (x) (\End(A_0\otimes \Fpbar)\otimes
\hat \Z)=\End(A_0\otimes \Fpbar)\otimes \hat \Z$, and
\item [(2)] the induced map 
\[ \Int(x): \End(A_0\otimes \Fpbar)\otimes (\Z/N\Z) \to \End(A_0\otimes 
\Fpbar)\otimes (\Z/N\Z)  \]
is the identity map.
\end{itemize}
It is not hard to show (see Lemma~\ref{65}) that  
\begin{equation}
  \label{eq:75}
  \grN_N=D(\Q_p)G(\Z_p)\times Z(\A^p_f)U^p_N.
\end{equation}
We call $\calT_N$ the set of $G$-types with level $U_N$ and 
the cardinality $T_N$ of
$\calT_N$ the type number of the group $G$ with level group
$U_N$. 
Let ${\bf \Lambda}^*_{g,N}(\Fp)\subset\Lambda^*_{g,N}$ be the subset
of the fixed points by $\sigma_p$. 


\begin{thm}\label{75} \
\begin{itemize}
  \item[(1)] Every member $(A,\lambda,\eta_N)\in
    {\bf \Lambda}^*_{g,N}(\Fp)$ has a model $(A',\lambda',\eta_N')$ 
    defined over $\F_{p}$. Moreover, if $N\ge 3$, then the model
    $(A',\lambda',\eta'_N)$ is unique up to 
    isomorphism over $\Fp$ and the base change
    $(A',\lambda',\eta'_N)\otimes_{\Fp} \F_{p^2} $ is the canonical
    model over $\F_{p^2}$ of  $(A,\lambda,\eta_N)$. 
  \item[(2)]  A member $(A,\lambda,\eta_N)\in\Lambda^*_{g,N}$ lies 
   in ${\bf \Lambda}^*_{g,N}(\F_p)$ if and only $G(\Q)\cap x
   U_N(\pi_0) x^{-1}\neq \emptyset$, where $[x]\in G(\Q)\backslash
   G(\A_f)/U_N$ is the class corresponding to $(A,\lambda,\eta_N)$.

  \item[(3)] We have $\tr R(\pi_0)=|\Lambda^*_{g,N}(\F_p)|$. 
  \item[(4)] The natural map $\pr: \Lambda^*_{g,N}\to \calT_N$ induces
    a bijection between the set of 
    $\Gal(\F_{p^2}/\Fp)$-orbits of $\Lambda^*_{g,N}$ 
    with the set $\calT_N$. 
  \item[(5)] We have \[ \tr R(\pi_0)=2T_N-H_N,\] where
    $ H_N:=|\Lambda^*_{g,N}|$ is 
    the class number of $G$ with level group $U_N$.  
\end{itemize}
\end{thm}
\begin{proof}
  (1) We may assume that $N\ge 2$ as the case $N=1$ is treated in
      Section~\ref{sec:06}. By Lemma~\ref{74}, we may assume that
      $(A,\lambda,\eta_N)$ 
      is the canonical model over $\F_{p^2}$ in its isomorphism
      class. Then its conjugation 
      $(^{\sigma_p}\! A, ^{\sigma_p}\! \lambda , ^{\sigma_p}\!
      \eta_N)$ is also a canonical model over
      $\F_{p^2}$ in its isomorphism class. 
      By assumption, there exists an isomorphism $a_{\sigma_p}:
      (A,\lambda,\eta_N) \simeq (^{\sigma_p}\! A, ^{\sigma_p}\!
      \lambda , ^{\sigma_p}\! \eta_N)$ over $\F_{p^2}$, as they are
      canonical models over $\F_{p^2}$ in the same isomorphism class. 
      Then $\sigma_p(a_{\sigma_p}) a_{\sigma_p}$ is an automorphism of
      $(A,\lambda,\eta)$ and is equal to $\pm 1$ ($=1$ if $N\ge 3$). 
      Using the same argument as in Lemma~\ref{64}, we define
      recursively $a_{\sigma_p^i}:=\sigma_p(a_{\sigma_p^{i-1}})
      a_{\sigma_p}$ and
      show that the map 
      $\sigma\mapsto a_\sigma$ satisfies the 1-cocycle condition for
      the field extension $\F_{p^4}/\Fp$ (for $\F_{p^2}/\Fp$ if
      $N\ge 3$). Then by Weil's theorem, there exist a model
      $(A',\lambda', \eta_N')$ over $\Fp$ and an isomorphism 
      $b: (A',\lambda', \eta_N') \simeq (A',\lambda', \eta_N')$ over
      $\F_{p^4}$ (over $\F_{p^2}$ if $N\ge 3$) such
      that $a_{\sigma_p}=\sigma_p(b)\circ b^{-1}$. When $N\ge 3$, we
      have shown that this model is compatible with the canonical
      model over $\F_{p^2}$. The uniqueness follows from a       
      theorem of Serre (cf. \cite[Lemma p.~207]{mumford:av}) that the
      automorphism group 
      $\Aut(A,\lambda,\eta_N)$ is trivial. 
  
  The proofs for the statements (2), (3), (4) and (5) are the same as
  before. \qed    
\end{proof}

\begin{remark}\label{76}\
\begin{enumerate}
\item Similar to Remark~\ref{68}, we discuss a bit about models over
  $\Fp$. Let ${\bf \Lambda}^{*,{\rm
  nc}}_{g,N}(\Fp)\subset {\bf \Lambda}^*_{g,N}(\Fp)$ denote the subset
  consisting of isomorphism classes for which a model $\ul A'$ over
  $\Fp$ can be
  chosen so that $\ul A'\otimes \F_{p^2}$ is
  the canonical model over $\F_{p^2}$. 
  We have 
$ {\bf \Lambda}^{*,{\rm nc}}_{g,N}(\Fp)\subset {\bf \Lambda}^*_{g,N}(\Fp)$
and ${\bf \Lambda}^{*,{\rm nc}}_{g,N}(\Fp)
={\bf \Lambda}^*_{g,N}(\Fp)$ if $N\ge
3$. We do not know whether this equality holds when $N\le 2$. 


\item For $N\ge 3$, we have the following explicit formula
  (cf. Section~1)
  \begin{equation}
    \label{eq:h}
    H_N=|\GSp_{2g}(\Z/N\Z)| \frac{(-1)^{g(g+1)/2}}{2^g}
\left \{ \prod_{k=1}^g \zeta(1-2k) 
  \right \}\cdot \prod_{k=1}^{g}\left\{(p^k+(-1)^k\right \}.
  \end{equation}
\end{enumerate}
\end{remark}

\subsection{Proof of Proposition~\ref{15}}
\label{sec:74}
Let $(\Z^{2g}, \psi)$ be the
standard symplectic $\Z$-module of rank $2g$, and let $\GSp_{2g}$ be
the group of symplectic similitudes defined over $\Z$. 
Let $\calA_{g,1,N}$ denote
the moduli space over $\Fp$ of $g$-dimensional principally
polarized abelian varieties $(A,\lambda,\alpha)$ with a (full)
symplectic 
level-$N$ structure. Recall that a full symplectic level-$N$ structure on a
$g$-dimensional principally polarized abelian scheme $(A,\lambda)$
over a connected $\Fp$-scheme $S$ is an isomorphism
$\alpha:(\Z/N\Z)^{2g}\simeq A[N](S)$ such that there is an element
$\zeta\in \mu_N(S)$ such that
\begin{equation}
  \label{eq:77}
  e_\lambda(\alpha(x),\alpha(y))=\zeta \psi(x,y),\quad  \forall\, x, y\in
(\Z/N\Z)^{2g}.
\end{equation}
We denote by $\Lambda_{g,1,N}\subset \calA_{g,1,N}\otimes \Fpbar$ the
superspecial locus. 
Put 
\begin{equation}
  \label{eq:78}
  H^p_f:=\GIsom(((\A^p_f)^{2g},\psi), V^{(p)}(\ul A_0)). 
\end{equation}
The set $H^p_f$ is a $(G(\A^p_f), \GSp_{2g}(\A^p_f))$-bi-torsor
together with an action of the Galois group $\calG=\Gal(\Fpbar/\Fp)$ 
from the left. The
action of $\calG$ on $T^{(p)}(A_0)$ gives rise to a Galois
representation 
\[ \rho:\calG\to G(\hat \Z^{(p)}). \]
Using Lemma~\ref{43} (3), the action of $\calG$ on $H^p_f$ is given
as follows:
\begin{equation}
  \label{eq:79}
  \sigma\cdot f=\rho(\sigma)\circ f,\quad \forall\,\sigma\in \calG,
f\in H^p_f.
\end{equation}

Let $\wt \calA_g^{(p)}:=(\calA_{g,1,N})_{p \nmid N}$ and $\wt
\Lambda_g=(\Lambda_{g,1,N})_{p \nmid N}$ be as in Section~\ref{sec:01}.  
Elements in $\wt
\Lambda_g$ can be represented as $(A,\lambda,\wt \alpha)$, where
$(A,\lambda)\in \Lambda_g$ and $\wt \alpha\in
\GIsom(((\hat\Z^{(p)})^{2g},\psi),T^{(p)}(\ul A))$. 

Let $(A,\lambda,\wt \alpha)\in \wt \Lambda_g$ be
an object. We can choose a quasi-isogeny
$\varphi\in \Qisog_{\Fpbar}(\ul A,\ul A_0)$. The composition
$\varphi\circ \wt \alpha$ 
\[ 
\begin{CD}
  (\A^p_f)^{2g}@>\wt \alpha>> V^{(p)}(A) @>\varphi>>  V^{(p)}(A_0)
\end{CD} \]
defines a well-defined map
\[ \bfb^p: \wt \Lambda_g \to G(\Z_{(p)})\backslash H^p_f, \quad
(A,\lambda,\wt \alpha)\mapsto \varphi\circ \wt \alpha \]
which is compatible with the $\GSp_{2g}(\A^p_f)$-action. 
Using the same argument in the proof of Theorem~\ref{73}, one shows
that the map $\bfb^p$ is bijective and $\calG$-equivariant. 

If we fix a trivialization 
\[ \wt \alpha_0\in \GIsom(((\hat\Z^{(p)})^{2g},\psi),T^{(p)}(\ul
A_0)),\]
then we get 
\begin{itemize}
\item a Galois representation
\[ \rho_0: \calG\to \GSp_{2g}(\hat \Z^{(p)}) \]
such that the following diagram 
\begin{equation}\
  \begin{CD}
    (\hat \Z^{(p)})^{2g} @>{\wt \alpha_0}>> T^{(p)}(A_0) \\
    @VV\rho_0(\sigma)V  @VV\rho(\sigma)V \\
    (\hat \Z^{(p)})^{2g} @>{\wt \alpha_0}>> T^{(p)}(A_0), \\
  \end{CD}
\end{equation}
commutes, 
\item an isomorphism 
\begin{equation}\label{eq:711}
  j_0: \GSp_{2g}(\A_f^p)\longrightarrow H^p_f,\quad
\phi'\mapsto \wt \alpha_0\circ \phi',
\end{equation}
and 
\item an isomorphism
  \begin{equation}\label{eq:712}
    i_0: G(\A_f^p)\longrightarrow \GSp_{2g}(\A_f^p),\quad \phi\mapsto
    \wt \alpha_0^{-1}\circ \phi\circ \wt \alpha_0. 
  \end{equation}
\end{itemize} \ 

Let $\calG$ act on $\GSp_{2g}(\A_f^p)$ by the action $\rho_0$. 
One checks that 
\[ \sigma\cdot j_0(\phi')=\sigma\cdot \wt \alpha_0\circ \phi'=
\rho(\sigma)\circ \wt \alpha_0\circ \phi'=\wt \alpha_0\circ
\rho_0(\sigma) \phi'. \]
That is, the following diagram
\[ 
\begin{CD}
  \GSp_{2g}(\A_f^p) @>j_0>> H^p_f \\
  @VV{\sigma}V @VV{\sigma}V \\
  \GSp_{2g}(\A_f^p) @>j_0>> H^p_f \\
\end{CD}
\]
commutes. 
Therefore, we obtain an isomorphism (depending on the choice of $\wt
\alpha_0$)  
\begin{equation}\label{eq:714}
  \bfb^p_0: \wt \Lambda_g \simeq i_0(G(\Z_{(p)}))\backslash
  \GSp_{2g}(\A_f^p)
\end{equation}
which is compatible with the $\GSp_{2g}(\A_f^p)$-action and the
$\calG$-action by $\rho_0$. 
This completes the proof of
Proposition~\ref{15}. 


\section{The non-principal genus case}
\label{sec:08}

In previous sections we investigate arithmetic properties of
{\it principally polarized} superspecial abelian varieties, 
as well as
the relationship to the trace of certain Hecke operator on the space
of automorphic forms of weight zero. 
These abelian varieties correspond to the
principal genus of the group $G$.
Since the non-principal genus case is of equal
importance (see Section~\ref{sec:81}), 
we include the analogous results for these superspecial
abelian varieties as well. 
The proofs of most results are the same as the
principal genus case and are omitted.

\subsection{The class numbers $H_n(p,1)$ and $H_n(1,p)$}
\label{sec:81}
We start with the arithmetic aspect of polarized superspecial abelian
varieties which are related to components of 
the supersingular locus of $\calA_g\otimes \Fpbar$, 
the coarse moduli space over $\Fpbar$ of 
$g$-dimensional principally polarized abelian varieties. 
Our references are Ibukiyama-Katsura-Oort
\cite[Section 2]{ibukiyama-katsura-oort} and Li-Oort 
\cite[Section 4]{li-oort}.

Let $B$ be the quaternion algebra over $\Q$ ramified exactly at
$\{p,\infty\}$, 
and let $\calO$ be a maximal order of $B$.
Put $V=B^{\oplus n}$,  regarded as a left $B$-module of row vectors, and
let $\psi(x,y)=\sum_{i=1}^n x_i \bar y_i$ be the standard Hermitian
form on $V$, where the map $y_i\mapsto \bar y_i$ is the canonical involution
on $B$. Let $G$ be the algebraic group of $\psi$-similitudes over $\Q$; the
group of $\Q$-points of $G$ is
\[ G(\Q):=\{h\in M_n(B)\, |\, h \bar h^t=r I_n \text{ for some $r\in
  \Q^\times$}\, \}. \]
(Note that the group $G$ here is isomorphic to the generic fiber
$G_\Q$ of the group scheme
$G$ over $\Spec \Z$ defined in Introduction when $n=g$. Thus, we use
the same notation.)



Two $\calO$-lattices $L$ and $L'$ in $B^{\oplus n}$ are called {\it
  globally equivalent} (denoted by $L\sim L'$) if $L'=L  h$ for some
  element $h\in G(\Q)$.
For any finite place $v$ of $\Q$, we write $B_v:=B\otimes \Q_v$,
$\calO_v:=\calO\otimes \Z_v$ and $L_v:=L\otimes \Z_v$.
Two $\calO$-lattices $L$ and $L'$ in $B^{\oplus n}$ are called {\it
locally equivalent at $v$} (denoted by $L_v\sim L_v'$) if $L_v'=L_v
  h_v$ for 
some element $h_v\in G(\Q_v)$.
A {\it genus} of $\calO$-lattices is a maximal set 
of (global) $\calO$-lattices in $B^{\oplus n}$ 
which are equivalent to each other locally at every
finite place $v$.

We define an $\calO_p$-lattice $N_p\subset B_p^{\oplus n}$ as follows:
  \[ N_p:=\calO_p^{\oplus n}\cdot
  \begin{pmatrix}
    I_{r} & 0 \\ 0 & \pi I_{n-r}
  \end{pmatrix}\cdot \xi\subset B_p^{\oplus n}, \]
where $r$ is the integer $[n/2]$, $\pi$ is a uniformizer in
$\calO_p$, and $\xi$ is an element in $\GL_n(B_p)$ such that
\[ \xi \bar \xi^t=\text{anti-diag$(1,1,\dots,1)$}. \]

\begin{defn}\

(1) Let $\calL_n(p,1)$ denote the set of 
    $\calO$-lattices $L$ in $B^{\oplus n}$ such that $L_v\sim
    \calO_v^{\oplus n}$ at every finite place $v$. The set
    $\calL_n(p,1)$ is called the {\it principal genus}. Let
    $\calL_n(p,1)/\!\sim$ denote the set of global equivalence classes
    in $\calL_n(p,1)$. As a well-known fact, $\calL_n(p,1)/\!\sim$ is a
    finite set. The cardinality  
    $H_n(p,1):=\# \calL_n(p,1)/\!\sim$ is called the {\it class
    number} of the principal genus.

(2) Let $\calL_n(1,p)$ denote the set of 
    $\calO$-lattices $L$ in $B^{\oplus n}$ such that $L_p\sim N_p$ and
    $L_v\sim \calO_v^{\oplus n}$ at every finite place $v\neq p$. 
    The set $\calL_n(1,p)$ is called the {\it non-principal genus}. 
    Let $\calL_n(1,p)/\!\sim$ denote the set of global equivalence classes
    in $\calL_n(1,p)$. Similarly, $\calL_n(1,p)/\!\sim$ is a
    finite set, and its cardinality, called the {\it class
    number} of the non-principal genus, is denoted by $H_n(1,p)$.
\end{defn}

Recall that $\Lambda_g$ is the set of isomorphism classes of
$g$-dimensional principally polarized superspecial abelian varieties
over $\Fpbar$. When $g=2d>0$ is even, we denote by
$\Sigma_g$ the set of isomorphism classes of $g$-dimensional polarized
superspecial abelian varieties $(A,\lambda)$ of degree 
$\deg \lambda=p^{2d}$ over $\Fpbar$ satisfying 
$\ker \lambda=A[F]$. Here $F$ is the relative
Frobenius morphism $A\to A^{(p)}$ over $\Fpbar$. 

Let $\calS_{g}$ denote the supersingular locus of the Siegel moduli space 
$\calA_{g}\otimes \Fpbar$; it is a closed reduced subscheme 
of finite type over $\Fpbar$. 
Let $\Pi_0(\calS_{g})$ denote 
the set of irreducible components of $\calS_{g}$.

\begin{thm}[Li-Oort \cite{li-oort}]\label{82} We have
  \[ \Pi_0(\calS_{g})\simeq
  \begin{cases}
    \Lambda_g &\text{if $g$ is odd;} \\
    \Sigma_g &\text{if $g=2d$ is even.}\\
  \end{cases} \]
\end{thm}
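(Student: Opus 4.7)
My plan is to realize $\calS_g$ via the Li--Oort theory of polarized flag type quotients (PFTQ), and to count its irreducible components by parametrizing each one by a ``superspecial origin''.

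For each isomorphism class of superspecial polarized abelian variety $(Y_0,\mu_0)$ of the relevant type---principally polarized if $g$ is odd (so $[(Y_0,\mu_0)]\in\Lambda_g$), and with $\Ker\mu_0=Y_0[F]$ if $g=2d$ is even (so $[(Y_0,\mu_0)]\in\Sigma_g$)---I would introduce a moduli scheme $\scrP_{(Y_0,\mu_0)}$ parametrizing chains of isogenies
\[ Y_0 \twoheadrightarrow Y_1 \twoheadrightarrow \cdots \twoheadrightarrow Y_r \]
of degree $p$ at each step, with compatible polarizations $\mu_i$ on $Y_i$, such that the terminal $(Y_r,\mu_r)$ is principally polarized. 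The length $r$ and the admissible kernels at each stage are chosen so that every supersingular principally polarized abelian variety over $\Fpbar$ appears as the terminal $(Y_r,\mu_r)$ of some such chain; this uses the classical fact that every supersingular abelian variety is isogenous to a superspecial one.

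The first technical step is to show that $\scrP_{(Y_0,\mu_0)}$ is geometrically irreducible of dimension $\lfloor g^2/4 \rfloor$, matching $\dim\calS_g$. The idea is that at each stage the choice of $Y_{i+1}$ amounts to picking an appropriate $\alpha_p$-subscheme of $Y_i$ compatible with the polarization constraint; the locus of such choices is a $\bbP^1$ (or a closed subscheme thereof), and the whole tower assembles into an iterated $\bbP^1$-bundle. Passing to $(Y_r,\mu_r)$ defines a morphism $\tau_{(Y_0,\mu_0)}\colon \scrP_{(Y_0,\mu_0)}\to\calS_g$, whose scheme-theoretic image is irreducible and of the maximal dimension, hence is a full irreducible component of $\calS_g$. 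Surjectivity of $\bigsqcup_{[(Y_0,\mu_0)]}\tau_{(Y_0,\mu_0)}$ onto $\Pi_0(\calS_g)$ is then obtained by taking a generic point $(A,\lambda)$ of a given component $W$ and reconstructing a PFTQ chain ending at $(A,\lambda)$: iteratively divide by the kernel of Frobenius (suitably normalized to track the polarization) until the polarization stabilizes at the prescribed type, producing the required $(Y_0,\mu_0)$ whose $\tau$-image is dense in $W$.

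The main obstacle is \emph{injectivity}: one must show that non-isomorphic superspecial origins $(Y_0,\mu_0)\not\simeq(Y_0',\mu_0')$ yield distinct irreducible components. I would attack this by a rigidity statement: over the generic point of each component $W$, the Dieudonn\'e-module flag of $(A,\lambda)$ canonically reconstructs $(Y_0,\mu_0)$ up to isomorphism. Degenerate situations where several PFTQ chains give rise to the same $(A,\lambda)$ can only occur on strictly lower-dimensional strata of $\scrP_{(Y_0,\mu_0)}$ and $\calS_g$, and hence not at the generic point. Combined with the previous steps, this yields the bijections $\Lambda_g\simeq\Pi_0(\calS_g)$ for odd $g$ and $\Sigma_g\simeq\Pi_0(\calS_g)$ for even $g$.
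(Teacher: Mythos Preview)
The paper does not prove this theorem at all: it is stated purely as a citation to Li--Oort \cite{li-oort} and used as input. So there is nothing in the paper to compare your argument against.

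That said, your outline is recognizably the Li--Oort strategy (polarized flag type quotients), so a few remarks on where the sketch would need tightening if you wanted to turn it into an actual proof. First, the isogenies in a PFTQ are not degree~$p$ at each step with $\alpha_p$-kernel; the kernels are subgroup schemes of $\alpha_p^g$ of varying rank, and the parametrizing scheme is a tower of fibrations whose fibers are closed subvarieties of flag varieties, not simply an iterated $\bbP^1$-bundle. The irreducibility and the dimension count $\lfloor g^2/4\rfloor$ require real work (this is the content of several sections of \cite{li-oort}). Second, and more seriously, your injectivity argument (``the Dieudonn\'e flag at the generic point canonically reconstructs $(Y_0,\mu_0)$'') is too optimistic as stated: the same principally polarized supersingular $(A,\lambda)$ can sit at the end of PFTQ chains coming from non-isomorphic superspecial origins, even generically, unless one imposes the rigidity conditions that Li--Oort build into their definition. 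What actually gives injectivity is a careful count matching the number of components of $\calS_g$ against the class numbers, together with the structure theory of the PFTQ moduli; a direct ``reconstruct the top'' argument is not how they proceed. So your plan is in the right family, but the execution is the book \cite{li-oort}, not a paragraph.
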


The arithmetic aspect for the supersingular locus $\calS_{g}$ 
is given  by the following proposition.

\begin{prop}\label{83} \

  {\rm (1)} For any positive integer $g$, one has $|\Lambda_g|=H_g(p,1)$.

  {\rm (2)} For any even positive integer $g=2d$,
      one has $|\Sigma_g|=H_g(1,p)$.
\end{prop}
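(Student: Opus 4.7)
The plan is to realize both equalities as two instances of the same adelic lattice-counting principle, in the spirit of the parametrization (\ref{eq:11}) that has been used throughout the paper. I will treat the two parts in parallel, isolating the only real work, which is a local computation at $p$ in the non-principal case.

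First I would set up the dictionary between double cosets and lattice classes. Let $V=B^{\oplus g}$ with the Hermitian form $\psi$, so that $G$ is the group of $\psi$-similitudes. Fix an $\calO$-lattice $L_0\subset V$ corresponding to the chosen base point $(A_0,\lambda_0)$ via the functor that attaches to $\ul A=(A,\lambda)\in\Lambda_g$ its prime-to-$p$ Tate module together with the Dieudonn\'e module at $p$, packaged as an $\calO$-lattice in $V$. For part (1), the base point is $A_0=E_0^g$ with the product principal polarization, and $L_0$ may be taken to be $\calO^{\oplus g}$, so that $\Stab_{G(\A_f)}(L_0\otimes\hat\Z)=G(\hat\Z)=U$. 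For part (2), I would pick as base point a polarized superspecial $(A_0^\sharp,\lambda_0^\sharp)\in\Sigma_g$ and let $L_0^\sharp$ be the corresponding lattice; the key point (handled in the next paragraph) is that $L_0^\sharp$ is locally isomorphic to $\calO_v^{\oplus g}$ for $v\neq p$ and to $N_p$ at $p$, so its stabilizer $U^\sharp\subset G(\A_f)$ has the shape $U^{\sharp,p}\times U^\sharp_p$ where $U^{\sharp,p}=G(\hat\Z^{(p)})$ and $U^\sharp_p=\Stab(N_p)$.

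Once this is arranged, I would invoke (\ref{eq:11}) (and its analogue for $\Sigma_g$, which is proved by exactly the same Tate-module/Dieudonn\'e-module argument as \cite[Theorem 10.5]{yu:thesis}) to obtain
\begin{equation*}
\Lambda_g \;\simeq\; G(\Q)\backslash G(\A_f)/U,
\qquad
\Sigma_g \;\simeq\; G(\Q)\backslash G(\A_f)/U^\sharp.
\end{equation*}
Then the standard translation $[x]\mapsto L_x:=L_0\cdot x\cap V$ (resp.\ $L_0^\sharp\cdot x\cap V$) identifies $G(\Q)\backslash G(\A_f)/U$ with the set of $G(\Q)$-orbits of $\calO$-lattices in $V$ that are in the same genus as $L_0$ (resp.\ $L_0^\sharp$). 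By construction these genera are precisely $\calL_g(p,1)$ and $\calL_g(1,p)$ respectively, so the cardinalities equal $H_g(p,1)$ and $H_g(1,p)$.

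The only non-routine step is the local identification at $p$ for part (2): I must verify that the degree-$p^{2d}$ polarization with $\ker\lambda=A[F]$ translates, under the classical contravariant Dieudonn\'e correspondence, into the local lattice condition $L_p\sim N_p$. The hard part will be carrying out this local computation cleanly: starting from the superspecial Dieudonn\'e module $M^*(A_0^\sharp)$ over $W(\Fpbar)$ with $FM=VM$, one writes the quasi-polarization $\lambda_0^\sharp$ as a Hermitian form on $M[1/p]$, trivializes it over $\calO_p$, and checks that the chain $F M\subset M\subset V^{-1}M$ coupled with $\ker\lambda_0^\sharp=A_0^\sharp[F]$ forces the lattice to be $G(\Q_p)$-equivalent to $N_p$ as defined using $\pi$ and the antidiagonal matrix $\xi$. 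Modulo this verification (which uses only that $\calO_p$ is the maximal order in the division algebra $B_p$ and a standard elementary-divisor argument for Hermitian forms over $\calO_p$), both (1) and (2) drop out of the adelic reformulation. Existence of a base point in $\Sigma_g$ is standard and follows, e.g., from \cite{ibukiyama-katsura-oort}.
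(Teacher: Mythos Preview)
The paper does not actually prove this proposition: its proof consists solely of the two citations ``See \cite[Theorem 2.10]{ibukiyama-katsura-oort}'' for (1) and ``See \cite[Proposition 4.7]{li-oort}'' for (2). So there is no argument in the paper to compare your proposal against.

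That said, your outline is the standard one and is essentially what those references do. The adelic parametrization of $\Lambda_g$ by $G(\Q)\backslash G(\A_f)/U$ followed by the translation $[x]\mapsto L_0\cdot x$ into lattice classes in the principal genus is exactly the mechanism behind (1), and the analogue for $\Sigma_g$ with the stabilizer $U^\sharp$ of the non-principal lattice is the content of (2). You have correctly isolated the only nontrivial input for (2): the local Dieudonn\'e-theoretic check that the polarization condition $\ker\lambda=A[F]$ with $\deg\lambda=p^{2d}$ forces the $p$-component of the associated lattice to lie in the $G(\Q_p)$-orbit of $N_p$. This is carried out in \cite[Section 4]{li-oort} (see also \cite[Section 2]{ibukiyama-katsura-oort} for the principal case); your sketch of how it should go via elementary divisors for Hermitian $\calO_p$-lattices is on the right track, though in practice one works directly with the superspecial Dieudonn\'e module and the pairing induced by $\lambda$ rather than trivializing over $\calO_p$ first.

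One small caveat: the existence of a base point $(A_0^\sharp,\lambda_0^\sharp)\in\Sigma_g$ over $\Fpbar$ is indeed unproblematic (any $g$-dimensional superspecial abelian variety carries such a polarization), but note that later in Section~\ref{sec:82} the paper wants a base point over $\Fp$ with $\pi_0'^2=-p$, whose existence is conditional. For the bare counting statement here you only need a base point over $\Fpbar$, which is fine.
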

\begin{proof}
  (1) See \cite[Theorem 2.10]{ibukiyama-katsura-oort}. 
  (2) See \cite[Proposition 4.7]{li-oort}.
\end{proof}

\subsection{}
\label{sec:82} 
In the remainder of this section, let $g=2d$ be an even positive integer. 
Suppose that there exists a $g$-dimensional superspecial polarized abelian
variety 
$\ul A_0'=(A'_0,\lambda'_0)$ over $\Fp$ such that 
the polarization degree is
$p^{2d}$, 
\begin{equation}
  \label{eq:81}
  \ker\lambda_0'=A_0'[F],\quad \text{and} \quad (\pi'_{0})^2=-p,
\end{equation}
where
$\pi'_{0}$ is the Frobenius endomorphism of $A'_0$. The existence of
such an object may depend on $p$ and $g$; for example it exists if
$\left( \frac{-1}{p}\right )=1$ or $4|g$; 
see \cite[Lemmas 3.1 and 3.2]{yu:fod}. Nevertheless we fix one when
it exists. Let $G'_1\subset G'$ be the automorphism group schemes over
$\Spec\Z$ associated to the object $(A_0',\lambda_0')$, defined
similarly as in Section~\ref{sec:51}. 
Let $N$ be a positive {\it prime-to-$p$} integer,
and let $U'_N$ be the kernel of the reduction map $G'(\hat\Z)\to
G'(\Z/N\Z)$. The generic fiber $G'_{\Q}$ of $G'$ is isomorphic to
$G_\Q$, so 
we can also consider $U_N'$ as an open compact subgroup of $G(\A_f)$. 

Denote by $\Sigma^*_{g,N}$ 
the set of isomorphism
classes of objects $(A,\lambda,\eta_N)$ over $\Fpbar$, where
$(A,\lambda)$ is an object in $\Sigma_g$ and $\eta_N$ is a level-$N$
structure on $(A,\lambda)$ with respect to $\ul A_0'$ 
(defined in the same way as in Section~\ref{sec:71}). 




The Galois group $\calG=\Gal(\Fpbar/\Fp)$ 
acts naturally on the topological group
$G'(\A_f)$ and this action is given by (cf. Theorem~\ref{11}\,) 
\begin{equation}
  \label{eq:82}
   \sigma_p (x_\ell)_\ell= (\pi'_0 x_\ell {\pi'_0}^{-1})_\ell, \quad
   (x_\ell)_\ell \in G'(\A_f). 
\end{equation} 


\begin{thm}\label{84}
  There is a natural $\calG$-equivariant bijective map 
\[ \bfc'_N: \Sigma^{*}_{g,N}\to G'(\Q)\backslash G'(\A_f)/U'_N \] 
for which the base point $(A'_0,\lambda'_0, \id)$ is sent to the 
identity class $[1]$.
\end{thm}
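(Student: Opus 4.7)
The plan is to mirror the proof of Theorem~\ref{73} step by step, with the group $G'$ in place of $G$ and base point $\ul A'_0=(A'_0,\lambda'_0)$ in place of $\ul A_0$. First I would introduce an intermediate set $\Sigma^{(p)}_{g,N}$ of equivalence classes $(A,\lambda,[\eta]^p)$ with $(A,\lambda)\in \Sigma_g$ and $[\eta]^p$ a class in $\GIsom(V^{(p)}(\ul A'_0), V^{(p)}(\ul A))/U'^p_N$, where two triples are equivalent via a prime-to-$p$ quasi-isogeny $\varphi$ with $\varphi^*\lambda' \in \Z_{(p),+}^\times \cdot \lambda$ and $\varphi_*[\eta]^p = [\eta']^p$. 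The analogue of Lemma~\ref{lift} (whose proof only uses the isomorphism $\End(A'_0)\otimes \Z_\ell \simeq \End(A'_0(\ell))$ for $\ell\neq p$ and the smoothness of $G'$) then yields a $\calG$-equivariant bijection $\Sigma^*_{g,N} \simeq \Sigma^{(p)}_{g,N}$, reducing the theorem to constructing $\bfc'_N$ on $\Sigma^{(p)}_{g,N}$.

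I would then define $\bfc'_N$ by $(A,\lambda,[\eta]^p) \mapsto [\varphi \circ \eta]^p$ for any prime-to-$p$ quasi-isogeny $\varphi : A \to A'_0$ with $\varphi^*\lambda'_0 \in \Z_{(p),+}^\times \cdot \lambda$. Any two such $\varphi$ differ by an element of $G'(\Z_{(p)})$, so the value in $G'(\Z_{(p)})\backslash G'(\A^p_f)/U'^p_N \simeq G'(\Q)\backslash G'(\A_f)/U'_N$ is well-defined, and the base point is visibly sent to $[1]$.

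The bijectivity argument is the same as in Theorem~\ref{73}(2). Injectivity is immediate from the defining relation of the quotient: if $[\varphi \eta]^p = [\varphi' \eta']^p$, then $\varphi'^{-1}\varphi$ furnishes the desired equivalence. For surjectivity I would take $[\phi]\in G'(\A^p_f)$, multiply by an element of $G'(\Q)$ to arrange $\nu(\phi)\in (\hat\Z^{(p)})^\times$, and apply Tate's theorem to build an abelian variety $A$ and a prime-to-$p$ quasi-isogeny $\varphi : A \to A'_0$ realizing the lattice $\phi(T^{(p)}(A'_0))$; setting $\lambda := \varphi^*\lambda'_0$ automatically gives $\ker\lambda = A[F]$ of degree $p^{2d}$ because $\varphi$ is prime-to-$p$, so $\lambda$ inherits the correct $p$-local type from $\lambda'_0$. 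Galois-equivariance then follows from (\ref{eq:81}) by the same functorial check as in Theorem~\ref{73}(2), namely applying $\sigma\in\calG$ to $\phi = \varphi\circ \eta$ and using $\sigma(\phi) = \sigma(\varphi)\circ \sigma(\eta)$.

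The main step requiring care is the existence of a prime-to-$p$ quasi-isogeny $\varphi: A \to A'_0$ compatible with polarizations up to a positive rational scalar, needed both to define $\bfc'_N$ on all of $\Sigma^{(p)}_{g,N}$ and in the surjectivity argument above. This reduces to showing that $(A,\lambda)$ and $(A'_0,\lambda'_0)$ have isomorphic polarized $p$-divisible groups; since both are superspecial with $\ker=A[F]$ of degree $p^{2d}$, the polarized $p$-divisible groups have identical local invariants and the desired isomorphism exists. Together with the standing hypothesis of Section~\ref{sec:82} that a base point $\ul A'_0$ over $\Fp$ with $(\pi'_0)^2=-p$ exists, all the ingredients are in place, and the remaining verifications are a verbatim transcription of the principal genus argument.
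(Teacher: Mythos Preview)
Your proposal is correct and follows exactly the approach the paper indicates: the paper's proof of Theorem~\ref{84} simply states that it is the analogue of Theorem~\ref{73} with the same proof, and omits the details. You have supplied precisely those details, including the one point specific to the non-principal genus case (that a prime-to-$p$ quasi-isogeny preserves the $p$-local type of the polarization, so the resulting $\lambda$ still satisfies $\ker\lambda=A[F]$ of degree $p^{2d}$).
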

\begin{proof}
  This is the analogue of Theorem~\ref{73}. The proof is the
same and is omitted. \qed
\end{proof}

\begin{lemma}\label{85} 
  Every member $(A,\lambda,\eta_N)\in \Sigma^*_{g,N}$ has
  a unique model $(A',\lambda',\eta_N')$, up to isomorphism,  
  over $\F_{p^2}$ such that there exists a prime-to-$p$ quasi-isogeny
  $\varphi:A'\to A'_0\otimes \F_{p^2}$ such that
  $\varphi^*\lambda'_0\in \Z_{(p),+}^\times \cdot\lambda'$.
\end{lemma} 

\begin{proof}
  This is the analogue of Lemma~\ref{74}. The proof is the
same and is omitted. \qed
\end{proof}

Let $\wt\Sigma^*_g:=(\Sigma^*_{g,N})_{p\nmid N}$ the tower of the
superspecial loci $\Sigma^*_{g,N}$; it admits a right action of
$G'(\A^p_f)$.  
By Theorem~\ref{84}, one has a natural isomorphism
\begin{equation}
  \label{eq:83}
  {\bfd'}^p: G'(\Q)\backslash G'(\A_f)/G'(\Z_p)\simeq \wt \Sigma^*_g
\end{equation}
of pointed profinite sets which is compatible with the actions of
$\Gal(\F_{p^2}/\Fp)$ and of $G'(\A_f^p)$.



As in Section~\ref{sec:73},
et $R(\pi'_0)$ be the operator corresponding to the double coset
$U'_N(\pi'_0):=U'_N\pi'_0=\pi'_0 U'_N$ on the space $M_0(U'_N)$ of
automorphic forms of level $U'_N$. Let $\calT'_N$ be the set of
$G'$-types with level $U'_N$, and call $T'_N:=|\calT'_N|$ 
the type number of the group $G'$ with level group $U'_N$.
Let ${\bf \Sigma}^*_{g,N}(\Fp)\subset \Sigma^*_{g,N}$ be 
the subset of fixed points of the Frobenius map $\sigma_p$.



\begin{thm}\label{86} \
\begin{itemize}
  \item[(1)] Every member $(A,\lambda,\eta_N)\in
    {\bf \Sigma}^*_{g,N}(\Fp)$ has a model $(A',\lambda',\eta_N')$ 
    defined over $\F_{p}$. Moreover, if $N\ge 3$, then the model
    $(A',\lambda',\eta'_N)$ is unique up to  
    isomorphism over $\Fp$ and the base change
    $(A',\lambda',\eta'_N)\otimes_{\Fp} \F_{p^2} $ is the canonical
    model over $\F_{p^2}$ of $(A,\lambda,\eta_N)$. 
  \item[(2)] A member $(A,\lambda,\eta_N)\in \Sigma^*_{g,N}$ lies 
   in ${\bf \Sigma}^*_{g,N}(\Fp)$ if and only $G'(\Q)\cap x
   U'_N(\pi'_0) x^{-1}\neq \emptyset$, where $[x]\in G'(\Q)\backslash
   G'(\A_f)/U'_N$ is the class corresponding to $(A,\lambda,\eta_N)$. 

    Let $(A,\lambda,\eta_N)$ be member in
    $\Sigma^*_{g,N}$ which corresponds to  
    the class $[x]\in G'(\Q)\backslash G'(\A_f)/U'_N$.
    Then $(A,\lambda,\eta_N)\in {\bf \Sigma}^*_{g,N}(\Fp)$ if and only if  
    $G'(\Q)\cap x U'_N(\pi'_0) x^{-1}\neq \emptyset$.

  \item[(3)] We have $\tr R(\pi'_0)=|{\bf \Sigma}^*_{g,N}(\Fp)|$. 
  \item[(4)] The natural map $\pr: \Sigma^*_{g,N}\to \calT'_N$ induces
    a bijection between the set of 
    $\Gal(\F_{p^2}/\Fp)$-orbits of $\Sigma^*_{g,N}$ 
    with the set $\calT'_N$. 
  \item[(5)] We have 
    \begin{equation}
      \label{eq:85}
      \tr R(\pi'_0)=2T'_N-H'_N,      
    \end{equation}
    where
    $ H'_N=|\Sigma^*_{g,N}|$ is 
    the class number of $G'$ with level group $U'_N$.  
\end{itemize}
\end{thm}
\begin{proof}
  This is the analogue of Theorem~\ref{75}. The proof is the
  same and is omitted. \qed
\end{proof}

For $N\ge 3$, we have the following explicit formula (see
\cite[Theorem 6.6]{yu:ss_siegel}; note that in \cite{yu:ss_siegel} 
one fixes a choice of a primitive $N$th root of unity $\zeta_N$.)
\begin{equation}
  \label{eq:86}
  H_N'=|\GSp_{2g}(\Z/N\Z)| \frac{(-1)^{g(g+1)/2}}{2^g}
\left \{ \prod_{k=1}^g \zeta(1-2k) 
  \right \}\cdot \prod_{k=1}^{d}(p^{4k-2}-1).
\end{equation}

\begin{remark}\label{87}
  Our results in this section rely on the existence of a
  base point $(A_0',\lambda'_0)$ with property
  (\ref{eq:81}). We do not know whether such a base point always
  exists. However, for Theorem~\ref{84} and Lemma~\ref{85}, we only need 
  the existence of a base point
  $(A_0',\lambda_0')$ which is defined over $\Fp$, i.e. ${\bf
  \Sigma}_g(\Fp)\neq \emptyset$. For Theorem~\ref{86}, we only require in
  addition that its Frobenius endomorphism $\pi'_0$ 
  is square central in $G'(\Q)$.  
\end{remark}

\section{Trace formula for groups of compact type}
\label{sec:09}

The goal of this section is to calculate the trace of the 
Hecke operator $R(\pi_0)$ (or $R(\pi_0')$) for 
the groups $G$ (or $G'$) arising from the previous sections using the
Selberg trace formula. 
Since one has explicit formulas for the class numbers $H_N$ and
$H'_N$ (see (\ref{eq:h}) and (\ref{eq:86})), 
Theorems~\ref{75} and \ref{86} would provide information for the
type numbers $T_N$ and $T_N'$. 
For other possible applications, we work
on slightly more general groups and study the trace 
of specific Hecke operators. 
The majority of this section (up to Subsection~\ref{sec:96}) 
is independent from the previous sections; some notations
are different. 

\subsection{Hecke operators $R(\pi)$}\label{sec:91}
Let $G$ be a connected reductive group over $\Q$ 
such that $G(\Q)$ is a discrete subgroup of $G(\A_f)$.
We know that $G(\Q)$ is discrete in $G(\A_f)$ if and only the quotient
space $G(\Q)\backslash G(\A_f)$ is compact; see \cite[Proposition
1.4]{gross:amf} for more equivalent conditions.

We fix a $\Z$-structure on $G$. For simplicity, we assume that this
$\Z$-structure is induced by a rational faithful representation $\rho:G\to
\GL_m$ of $G$. For any positive integer $N$, one defines an open compact
subgroup $U_N$ as the  kernel of the reduction map
\[ {\rm Red}_N: G(\hat \Z)\to G(\Z/N\Z)\subset \GL_m(\Z/N\Z). \]

Denote by $L^2(G)=L^{2,\infty}(G(\Q)\backslash G(\A_f))$ the vector
space of locally constant $\C$-valued functions on $G(\Q)\backslash
G(\A_f)$, endowed with an inner product defined by
\[ \< f,g\>:=\int_{G(\Q)\backslash G(\A_f)} f(x) \ol {g(x)} dx,\]
for a Haar measure $dx$ on $G(\A_f)$ which we shall specify later. 
This is a pre-Hilbert space as the limit of locally constant
functions need not to be locally constant. For example, the limit of the
sequence of characteristic functions ${\bf1}_{G(\Q)U_n}$ for a decreasing
separated sequence of open compact subgroups $U_n$ 
is the delta function $\delta_{[1]}$ supported on the identity class $[1]$.   


The group $G(\A_f)$ acts on the space $L^2(G)$ by right translation,
denoted by $R$. 
Since the topological space $G(\Q)\backslash G(\A_f)$ is
compact, every function $f$ in $L^2(G)$ is invariant
under an open subgroup $U\subset G(\A_f)$, that is, $f$ is a smooth
vector. Also for any open compact subgroup $U$, the invariant subspace 
\[ L^2(G)^U=L^{2,\infty}(G(\Q)\backslash 
G(\A_f)/U)=\calC(G(\Q)\backslash G(\A_f)/U) \] 
consisting of constant functions on the finite set $G(\Q)\backslash
G(\A_f)/U$ is
finite-dimensional. In other words, the representation $L^2(G)$ of
$G(\A_f)$ is admissible. 
Denote by $\calH(G)$ the Hecke algebra of $\C$-valued, locally
constant, compactly supported functions on $G(\A_f)$; the
multiplication is given by the convolution. The Hecke algebra
$\calH(G)$ acts on the space $L^2(G)$ as follows: For 
$\varphi\in \calH(G)$, $f\in L^2(G)$,  
\[ R(\varphi)f(x):=\int_{G(\A_f)} \varphi(y) R(y)f(x)
dx=\int_{G(\A_f)} \varphi(y) f(xy) dx. \]

Fix an element $\pi$ of $G(\Q)$ which normalizes $U_N$ for all
$N$. For example, if $\pi$ normalizes $U_{p^m}$ for all powers of a
prime $p$ and $\pi\in G(\hat \Z^{(p)})$, then $\pi$ satisfies this
property. 
Let $\varphi_{\pi,N}$ be the
characteristic function of the double coset $U_N \pi U_N=\pi U_N=U_N
\pi$ and we write $R_N(\pi)=R(\varphi_{\pi,N})$, where $N$ is a
positive integer.
The goal is to calculate the trace of the Hecke operator $R_N(\pi)$. 
Here we normalize the Haar measure so that it takes volume one 
on the open compact subgroup $U_N$. 
The meaning of the trace $\tr R_N(\pi)$ can be interpreted
as the number of fixed points in the double coset space 
$G(\Q)\backslash G(\A_f)/U_N$ under the translation $[x]\mapsto
[x\pi]$. Note that the translation is well-defined as $\pi$ normalizes
$U_N$. Indeed, the image of $R_N(\pi)$ is contained in the
$U_N$-invariant subspace $\calC(G(\Q)\backslash G(\A_f)/U_N)$. So $\tr
R_N(\pi)$ is equal to the trace of its restriction 
\[ R_N(\pi): \calC(G(\Q)\backslash G(\A_f)/U_N)\to
\calC(G(\Q)\backslash G(\A_f)/U_N).\]   
This map is induced by the translation $[x]\mapsto [x\pi]$ and hence
its trace is equal to the number of fixed points. 

To simplify notation
we shall write $\varphi_\pi$ for $\varphi_{\pi,N}$ and $R(\pi)$ for
$R_N(\pi)$, respectively, 
keeping in mind that these also depend on $N$. 

\subsection{Trace of $R(\pi)$}
\label{sec:93}
The standard argument in the theory of trace formulas 
(cf. \cite{gelbart:gl2}) shows that the operator
$R(\pi)$ is of trace class and its trace can be calculated by the
following integral
\[ \tr R(\pi)=\int_{G(\Q)\backslash G(\A_f)} K_\pi(x,x) dx, \]
where 
\[ K_\pi(x,y):=\sum_{\gamma\in G(\Q)} \varphi_\pi(x^{-1}\gamma y). \]
Note that when $x$ and $y$ vary in a fixed open compact subset, 
there are only finitely many non-zero terms in the sum of $K_\pi(x,y)$. 
Regrouping the terms in the standard way (cf. \cite{gelbart:gl2}), we
get
\def\vol{{\rm vol}}
\begin{equation}
  \label{eq:91}
  \tr R(\pi)=\sum_{\gamma\in G(\Q)/\!\sim} a(G_\gamma)
O_\gamma(\varphi_\pi),
\end{equation}
where 
\begin{itemize}
\item $G(\Q)/\!\sim$ denotes the set of conjugacy classes of $G(\Q)$,
\item $G_\gamma$ denotes the centralizer of $\gamma$ in $G$, and
\item $a(G_\gamma):=\vol (G_\gamma(\Q)\backslash G_\gamma(\A_f))$, and
  \begin{equation}
    \label{eq:92}
    O_\gamma(\varphi_\pi):=\int_{G_\gamma(\A_f)\backslash G(\A_f)}
  \varphi_\pi(x^{-1} \gamma x) \frac{dx}{dx_\gamma},
  \end{equation}
where $dx_\gamma$ is a Haar measure on $G_\gamma(\A_f)$.
\end{itemize}
Note that the whole term $a(G_\gamma) O_\gamma(\varphi_\pi)$ does not 
depend on  the choice of the Haar measure $dx_\gamma$. As the closed
subgroup $G_\gamma(\A_f)$ is unimodular, the right $G(\A_f)$-invariant
Radon measure $dx/dx_r$ is defined and it is characterized by the
following property:
\begin{equation}
  \label{eq:93}
  \int_{G(\A_f)} f dx=\int_{G_\gamma(\A_f)\backslash G(\A_f)}
  \int_{G_\gamma (\A_f)} f(x_\gamma x) dx_\gamma
  \frac{dx}{dx_\gamma}
\end{equation}
for all functions $f\in C^\infty_o(G(\A_f))$. 
Using this formula, one easily shows that
\begin{equation}
  \label{eq:94}
  \vol_{dx/dx_\gamma}(G_\gamma(\A_f)\backslash
  G_\gamma(\A_f)U)=\frac{\vol_{dx}(U)}{\vol_{dx_\gamma}
  (G_\gamma(\A_f)\cap U)} 
\end{equation}
for any open compact subgroup $U\subset G(\A_f)$, or more generally,
\begin{equation}
  \label{eq:95}
  \vol_{dx/dx_\gamma}(G_\gamma(\A_f)\backslash
  G_\gamma(\A_f)a U)
  =\frac{\vol_{dx}(U)}{\vol_{dx_\gamma} (G_\gamma(\A_f)\cap aUa^{-1})}
\end{equation}
for any element $a\in G(\A_f)$; see Kottwitz \cite[Section
2.4]{kottwitz:clay}.
In our situation, 
any element $\gamma\in G(\Q)$ is
semi-simple. Therefore, any $G(\A_f)$-conjugacy class is closed and
the orbital integral $O_\gamma(\varphi_\pi)$ is a finite sum.   


For elements $x$ and $\gamma$ in $G(\Q)$ 
(resp. in $G(\Q_v)$ or in $G(\A_f)$), 
we write $x\cdot \gamma=x^{-1} \gamma x$, the conjugation of $\gamma$
by $x$. 
Put 
\begin{equation}
  \label{eq:955}
  \Delta_N:=\{\gamma\in G(\Q)\, | \, G(\A_f)\cdot \gamma \cap \pi
  U_N\neq \emptyset\, \} /\sim_{G(\Q)}. 
\end{equation} 
We show that $\Delta_N$ is a finite set. 

Let $C\subset G(\A_f)$ be an open compact subset such that
$G(\A_f)=G(\Q) C$. For example, if we write $G(\A_f)$ as a finite
disjoint union of double coset:
\[ G(\A_f):=\coprod_{i=1}^h G(\Q)c_i U_1, \]
then take $C$ to be the union of $c_i U_1$ for $i=1,\dots, h$. 
Put 
\[ X_N:=\bigcup_{c\in C} c^{-1}\cdot \pi U_N. \] 
Since $X_N$ is an image of the compact set $C\times \pi U_N$, it is
compact. If $\gamma$ is an element of $G(\Q)$ which lies 
in $G(\A_f)\cdot \pi U_N$, 
then there exists an element $a\in G(\Q)$ such that $a\cdot
\gamma \in X_N$. The element $a\cdot \gamma$ 
lies in the intersection $G(\Q)\cap X_N$, which is a finite set. 
Since $\Delta_N$ consists of elements $\gamma$ as above modulo the
$G(\Q)$-conjugation, the set $\Delta_N$ is finite. 

For each class
$\bar \gamma\in \Delta_N$, we select a representative $\gamma$ in
$X_N\cap G(\Q)$ and choose the Haar measure $dx_\gamma$ so that 
\begin{equation}\label{eq:96}
  \vol(U_{\gamma,N})=1, \quad \text{where\ }
  U_{\gamma,N}:=G_\gamma(\A_f)\cap U_N. 
\end{equation}
Put 
\[ \calE_\gamma:=\{x\in G(\A_f)\, |\, x^{-1}\gamma x \in \pi U_N\}, \]
which is the support of the function 
$\phi_\gamma(x):=\varphi_\pi(x^{-1} \gamma x)$. It is clear that 
$\calE_\gamma$ is stable under the left $G_\gamma(\A_f)$-action and
the right $U_N$-action. By our choice of Haar measures on $G(\A_f)$
and $G_\gamma(\A_f)$, the orbital
integral $O_\gamma(\varphi_\pi)$ can be calculated using the formula
(see (\ref{eq:95}))
\[ O_\gamma(\varphi_\pi)=
\sum_{[a]\in G_\gamma(\A_f)\backslash \calE_\gamma/U_N} 
\vol(G_\gamma(\A_f)\cap aUa^{-1})^{-1}. \]
We have shown the following result.

\begin{prop}\label{92}
  Let $\Delta_N\subset G(\Q)/\!\!\sim$ be the subset of $G(\Q)$-conjugacy
  classes defined in {\rm (\ref{eq:955})}.  
\begin{itemize}
\item[(1)] The set $\Delta_N$ is finite.
\item[(2)] We have
  \begin{equation}
    \label{eq:97}
    \tr R(\pi)=\sum_{\bar \gamma\in \Delta_N} a(G_\gamma)
     O_{\gamma}(\varphi_\pi) 
  \end{equation}  
and 
\begin{equation}
  \label{eq:975}
  O_\gamma(\varphi_\pi)=
\sum_{[a]\in G_\gamma(\A_f)\backslash \calE_\gamma/U_N} 
\vol(G_\gamma(\A_f)\cap aUa^{-1})^{-1}.
\end{equation}

\end{itemize}
\end{prop}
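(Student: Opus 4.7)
The plan is to follow the standard Selberg trace formula derivation, which is largely set up in the paragraphs preceding the statement. The operator $R(\pi)$ is already represented as integration against the kernel
\[
K_\pi(x,y) = \sum_{\gamma \in G(\Q)} \varphi_\pi(x^{-1}\gamma y),
\]
so the trace is $\int_{G(\Q)\backslash G(\A_f)} K_\pi(x,x)\,dx$, and the usual regrouping of terms by $G(\Q)$-conjugacy classes yields the sum in \eqref{eq:91}. The two tasks that remain are: (a) showing that only the classes in $\Delta_N$ contribute, and that this set is finite, and (b) evaluating each orbital integral as a sum over double cosets.

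For finiteness of $\Delta_N$, I would invoke the compactness of $G(\Q)\backslash G(\A_f)$ (which holds by hypothesis, cf.\ Proposition~\ref{91}). Writing $G(\A_f) = G(\Q)\,C$ for an open compact $C$, and setting $X_N := \bigcup_{c\in C} c^{-1}\cdot \pi U_N$, one observes that $X_N$ is compact (being the continuous image of the compact set $C\times \pi U_N$ under the conjugation map). Every class $\bar\gamma \in \Delta_N$ admits a representative in $G(\Q) \cap X_N$: indeed, if $\gamma = x^{-1}\cdot \pi u$ for some $x\in G(\A_f)$ and $u\in U_N$, write $x = ac$ with $a \in G(\Q)$ and $c\in C$, and then $a\cdot \gamma \in X_N$. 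Since $G(\Q)$ is discrete in $G(\A_f)$, the intersection $G(\Q)\cap X_N$ is finite, proving (1). It also follows that if $\bar\gamma \notin \Delta_N$ then $\calE_\gamma = \emptyset$ and the corresponding orbital integral vanishes, so the sum in \eqref{eq:91} can be restricted to $\Delta_N$, giving \eqref{eq:97}.

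For the orbital integral formula, I would unpack \eqref{eq:92}. The integrand $\phi_\gamma(x) := \varphi_\pi(x^{-1}\gamma x)$ is the characteristic function of $\calE_\gamma$, which is bi-invariant under the left $G_\gamma(\A_f)$-action and the right $U_N$-action. Decomposing $\calE_\gamma$ into its finitely many double cosets $G_\gamma(\A_f)\, a\, U_N$ (finiteness here coming from the compactness of the support of $\phi_\gamma$ modulo $G_\gamma(\A_f)$, essentially the same argument as in part (1)), each double coset contributes
\[
\vol_{dx/dx_\gamma}\bigl(G_\gamma(\A_f)\backslash G_\gamma(\A_f)\,a\,U_N\bigr) = \vol_{dx}(U_N)\cdot \vol_{dx_\gamma}\bigl(G_\gamma(\A_f)\cap aU_N a^{-1}\bigr)^{-1}
\]
by the general identity \eqref{eq:95}. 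With our normalizations $\vol_{dx}(U_N)=1$ and \eqref{eq:96}, this yields \eqref{eq:975}. The only mildly subtle point — not really an obstacle — is the compatibility of the Haar measure choices on $G_\gamma(\A_f)$ used to define $a(G_\gamma)$ and $O_\gamma(\varphi_\pi)$; but as noted right after \eqref{eq:92}, the product $a(G_\gamma)\cdot O_\gamma(\varphi_\pi)$ is independent of that choice, so it is safe to fix $dx_\gamma$ by \eqref{eq:96} for each representative $\gamma$ separately.
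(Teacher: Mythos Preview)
Your proposal is correct and follows essentially the same approach as the paper: the compactness argument via $X_N = \bigcup_{c\in C} c^{-1}\cdot \pi U_N$ for finiteness, and the double-coset decomposition of $\calE_\gamma$ together with \eqref{eq:95} for the orbital integral, are exactly what the paper does in the paragraphs leading up to the proposition. Your added remarks on why the sum in \eqref{eq:91} restricts to $\Delta_N$ and on the Haar-measure compatibility are implicit in the paper but helpfully made explicit here.
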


\subsection{Trace of $R(\pi)$ with $U_N$ small}
\label{sec:94}
Put 
\[ \Delta_f(\pi):=\{\gamma\in G(\Q)\, |\gamma\in G(\A_f)\cdot \pi
\}/\sim_{G(\Q)}. \]
Clearly we have $\Delta_f(\pi)\subset \Delta_N$.  

\begin{lemma}\label{93}
  There exists a positive integer $N_0$ such that for all positive
  integers $N$ divisible by $N_0$, we have
  $\Delta_N=\Delta_f(\pi)$.
\end{lemma}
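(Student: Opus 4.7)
The plan is to argue that both inclusions $\Delta_f(\pi)\subseteq\Delta_N\subseteq\Delta_f(\pi)$ hold once $N$ is divisible by a large enough integer $N_0$, where the first inclusion is automatic and the second becomes true in the limit. First I would record the following easy monotonicity: whenever $M\mid N$ one has $U_N\subseteq U_M$, hence $\pi U_N\subseteq \pi U_M$, so $\Delta_N\subseteq \Delta_M$ as subsets of $G(\Q)/\!\!\sim$. Thus the family $\{\Delta_N\}_N$, indexed by divisibility, is decreasing. Since $\Delta_1$ is already finite by Proposition~\ref{92}(1), this decreasing family must stabilize: there exists $N_0\in\mathbb{N}$ such that $\Delta_N=\Delta_{N_0}=:\Delta_\infty$ for every $N$ with $N_0\mid N$. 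The inclusion $\Delta_f(\pi)\subseteq\Delta_\infty$ is immediate from $\pi\in\pi U_N$.

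For the reverse inclusion $\Delta_\infty\subseteq\Delta_f(\pi)$, pick a representative $\gamma\in G(\Q)$ of a class in $\Delta_\infty$. By definition, for every positive integer $N$ there exists $x_N\in G(\A_f)$ with $x_N^{-1}\gamma x_N\in \pi U_N$. The key topological observation is that the sets $\{\pi U_N\}_{N\ge 1}$ form a neighborhood basis of $\pi$ in $G(\A_f)$; indeed the $U_N$ form a neighborhood basis of the identity inside the open subgroup $G(\hat\Z)\subseteq G(\A_f)$, and $\pi$ normalizes each $U_N$ by the running assumption in Section~\ref{sec:92}. Consequently, the points $x_N^{-1}\gamma x_N$ converge to $\pi$, and therefore $\pi$ lies in the closure of the $G(\A_f)$-conjugacy orbit $G(\A_f)\cdot\gamma$ inside $G(\A_f)$.

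Now I invoke the remark made immediately before Proposition~\ref{92}: because $G_\Q$ is reductive of compact type and $\gamma\in G(\Q)$ is semisimple, its $G(\A_f)$-conjugacy orbit is closed in $G(\A_f)$. Therefore $\pi\in G(\A_f)\cdot\gamma$, i.e.\ $\pi=y^{-1}\gamma y$ for some $y\in G(\A_f)$, which means $\gamma\in G(\A_f)\cdot\pi$ and hence $[\gamma]\in\Delta_f(\pi)$. Combining the two inclusions yields $\Delta_N=\Delta_f(\pi)$ for all $N$ divisible by $N_0$.

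The only non-formal ingredient, and therefore the main potential obstacle, is the closedness of the adelic conjugacy orbit of a semisimple element; but this is exactly the statement the paper uses (and justifies) when asserting that $O_\gamma(\varphi_\pi)$ is a finite sum. Everything else is a straightforward stabilization plus limit argument using the neighborhood basis property of $\{\pi U_N\}$ and the finiteness of $\Delta_1$ from Proposition~\ref{92}(1).
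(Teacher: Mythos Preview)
Your proof is correct. Both you and the paper begin with the same stabilization step (a decreasing family of finite sets must stabilize), but then diverge on how to show that each $\gamma$ in the stable set actually lies in the $G(\A_f)$-orbit of $\pi$. The paper works with the representatives $\gamma\in G(\Q)\cap X_N$, so that the conjugating elements witnessing $\gamma=c_n\pi u_n c_n^{-1}$ all lie in the fixed \emph{compact} set $C$; it then extracts a convergent subsequence $c_{m_i}\to c_0$ and passes to the limit to obtain $\gamma=c_0\pi c_0^{-1}$ directly. Your argument instead allows the conjugators $x_N$ to roam freely in $G(\A_f)$, notes that $x_N^{-1}\gamma x_N\to\pi$, and then invokes closedness of the $G(\A_f)$-orbit of the semisimple element $\gamma$. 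The paper's route is more self-contained, since it avoids appealing to the closedness of adelic conjugacy orbits (the less elementary fact here); yours is conceptually cleaner once that fact is granted, and in fact explains why the paper bothered to mention orbit-closedness before Proposition~\ref{92}. One small quibble: you claim $x_N$ exists ``for every positive integer $N$'', but strictly you only know $[\gamma]\in\Delta_N$ for $N$ divisible by $N_0$; this cofinal family is enough for the limit, so the argument is unaffected.
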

\begin{proof}
  Since $G(\Q)\cap X_N$ is finite, there is a positive integer $N_0$
  such that $G(\Q)\cap X_N$ remains the same for all $N$ with
  $N_0|N$. 
  Let
  $\gamma$ be an element in $G(\Q)\cap X_N$. Then for all $n$ with 
  $N_0|n$, 
  we have $\gamma=c_n \pi u_n c_n^{-1}$ for some $c_n\in C$ and
  $u_n\in U_n$. 
  Since
  $C$ is compact, there is a subsequence $\{c_{m_i}\}$ of $\{c_n\}$ 
  which converges
  to an element $c_0\in C$. As $i\to \infty$, we get $\gamma=c_0 \pi
  c_0^{-1}$. This shows the lemma. \qed 
\end{proof}

Suppose an element $\gamma\in G(\Q)$ has the form $y^{-1} \pi y$ for
some $y\in G(\A_f)$. We show that the term 
$a(G_\gamma)O_\gamma(\varphi_\pi)$ 
in (\ref{eq:97}) is equal to 
$a(G_\pi)O_\pi(\varphi_\pi)$. 
First, it is easy to show that an element $x\in G(\Q)$ lies in
$G_\gamma(\Q)$ if and only if $y x
y^{-1}\in G_\pi(\Q)$; thus $y G_\gamma(\Q)y^{-1}=G_\pi(\Q)$.
Let $t=yx$. The map $x\mapsto t$ induces an homeomorphism 
\[ G_\gamma(\Q)\backslash G(\A_f) \simeq G_\pi (\Q)\backslash
G(\A_f). \]

We have 
\begin{equation}
  \label{eq:98}
  \begin{split}
  a(G_\gamma)O_\gamma(\varphi_\pi) & =
  \int_{G_\gamma(\Q)\backslash G(\A_f)} 
  \varphi_\pi(x^{-1}y^{-1} \pi y x) dx \\
  & = \int_{G_\pi(\Q)\backslash G(\A_f)}
  \varphi_\pi(t^{-1} \pi t) dt=a(G_\pi) O_\pi (\varphi_\pi).  
  \end{split}
\end{equation}

Lemma~\ref{93} and the equality (\ref{eq:98}) show that when $U_N$ is
small,   
the trace of the Hecke operator $R(\pi)$ can be simplified significantly. 

\begin{prop}\label{94}
  There exists a positive integer $N_0$ such that for all positive
  integers $N$ divisible by $N_0$, we have
  \begin{equation}
    \label{eq:99}
     \tr R(\pi)=|\Delta_f(\pi)|\, a(G_\pi)\, O_\pi(\varphi_\pi)
  \end{equation}
\end{prop}


\begin{remark}\label{95}
Using either the results of G.~Prasad \cite{prasad:s-volume} 
on the volumes of fundamental domains or 
the results of Shimura \cite{shimura:euler} on the exact mass formulas,
one can determine the term $a(G_\pi)$ explicitly. 
The orbital integral $O_\pi(\varphi_\pi)$ is purely local 
in nature, that is, it is expressed as the product of 
the local orbital integrals
$O_\pi(\varphi_{\pi,v})$. 
There is also a similar local description as in (\ref{eq:975}).
Using this description, it is not hard to show that 
the local integral integral $O_\pi(\varphi_{\pi,v})$ is
equal to $1$ for almost all finite places. We were wondering 
whether after
shrinking the level subgroups $U_N$ at these bad places, 
each local orbital integral in bad places becomes 1 or becomes
an easily computable term. Nevertheless, we
continue to study the global term $|\Delta_f(\pi)|.$ 
\end{remark}



\subsection{A cohomological meaning for $\Delta_f(\pi)$} \label{sec:95}
Put 
\[ \Delta(\pi):=\{\gamma\in G(\Q)\, |\gamma\in G(\A)\cdot \pi
\}/\sim_{G(\Q)}. \]

\begin{lemma}\label{97}
  Let $G$ be a connected reductive group over $\R$ such that 
  the derived subgroup $G_{\rm der}$ is anisotropic. Then for any two
  elements $x$ and $y$ in $G(\R)$, $x$ and $y$ are $G(\R)$-conjugate
  if and only if they are $G(\C)$-conjugate. 
\end{lemma}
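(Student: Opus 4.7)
The plan is to reduce to the classical fact that in a compact connected Lie group conjugacy and complexification-conjugacy coincide. The hypothesis that $G_{\rm der}$ is anisotropic over $\R$ means precisely that $K := G_{\rm der}(\R)$ is a compact connected Lie group; this will be the main structural input, together with the fact that the maximal $\R$-split torus of $G$ lies in the connected center $Z(G)^0$.

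First I would show that every $x \in G(\R)$ is semisimple: if $x = x_s x_u$ is its Jordan decomposition, then $x_u$ is a unipotent element of $G_{\rm der}(\R)$, but a compact group has no nontrivial unipotent element, hence $x_u = 1$. Next I would place $x$ and $y$ in a common maximal $\R$-torus. A semisimple element lies in some maximal $\R$-torus, because the centralizer $G_x$ is a connected reductive $\R$-subgroup containing $x$ in its center, so any maximal $\R$-torus of $G_x$ works. I would then argue that all maximal $\R$-tori of $G$ are $G(\R)$-conjugate: any such torus $T$ contains the connected center $Z(G)^0$ and decomposes (up to isogeny) as $Z(G)^0 \cdot T_{\rm der}$ with $T_{\rm der} \subset G_{\rm der}$; since $G_{\rm der}(\R)$ is a compact connected Lie group, all of its maximal tori are $K$-conjugate, hence all maximal $\R$-tori of $G$ are $G(\R)$-conjugate. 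After replacing $y$ by a $G(\R)$-conjugate I may thus assume $x, y \in T(\R)$ for a single fixed $\R$-torus $T$.

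For the final step, I would use that $T_{\C}$ is a maximal torus of $G_{\C}$ and that $x, y \in T(\C)$ are $G(\C)$-conjugate to extract a Weyl element carrying $x$ to $y$. Concretely, if $hxh^{-1} = y$ with $h \in G(\C)$, then both $T_{\C}$ and $h^{-1}T_{\C}h$ are maximal tori of the connected reductive group $G_y(\C)$ and so are conjugate by some element of $G_y(\C)$; absorbing this into $h$ gives $g \in N_{G(\C)}(T_{\C})$ with $gxg^{-1} = y$, whose class lies in $W(G_{\C}, T_{\C})$. The classical theorem for compact connected Lie groups identifies $N_{K}(T_{\rm der}(\R))/T_{\rm der}(\R)$ with $W(G_{\rm der,\C}, T_{\rm der,\C})$, and the latter equals $W(G_{\C}, T_{\C})$ since the Weyl group is insensitive to the connected center. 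Lifting the Weyl class of $g$ to an element of $K \subset G(\R)$ then realizes the conjugation $x \mapsto y$ over $\R$, which completes the proof.

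The main obstacle is the structural work in the middle: verifying that the maximal $\R$-split torus is forced into $Z(G)^0$, that all maximal $\R$-tori are $G(\R)$-conjugate, and that every complex Weyl element descends to a real normalizer element. All three rely crucially on compactness of $G_{\rm der}(\R)$ and are the points where the anisotropy hypothesis genuinely enters.
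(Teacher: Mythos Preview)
Your argument is correct and rests on the same key fact as the paper's: for a compact connected Lie group the real Weyl group $N_K(S)/S$ coincides with the complex Weyl group, so $G(\C)$-conjugacy of torus elements forces $G(\R)$-conjugacy. One small slip: the centralizer $G_x$ of a semisimple element need not be connected (think of a regular diagonal element in $PGL_2$). This does not damage your argument, since $x$ lies in the center of the identity component $G_x^0$, which is reductive of the same rank as $G$, and any maximal $\R$-torus of $G_x^0$ therefore contains $x$; you should phrase it that way.

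The organization differs from the paper's. The paper first proves the statement when $G(\R)$ itself is compact (exactly your Weyl-group step), and then reduces the general case to this by writing $G(\R)=M(\R)\times S(\R)^0$, where $S$ is the maximal $\R$-split central torus and $M=G_{\rm der}\cdot T\cdot S[2]$ has $M^0$ anisotropic; the split central coordinate is a conjugation invariant and can be stripped off. You instead keep $G$ intact and argue directly that all maximal $\R$-tori of $G$ are $G(\R)$-conjugate (because they all contain $Z(G)^0$ and their intersections with $G_{\rm der}$ are conjugate inside the compact connected group $G_{\rm der}(\R)$). Your route is a bit more streamlined, since it avoids the bookkeeping with $S[2]$ and connected components of $M(\R)$; the paper's route makes the role of the split center more visible. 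Both hinge on the same compactness input in exactly the same way.
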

\begin{proof}
  We first show the case where $G(\R)$ is compact. Choose an
  anisotropic maximal
  torus $T$. Then any element can be
  $G(\R)$-conjugate to an element in $T(\R)$. Any two elements in $T(\R)$
  are conjugate if and only if they are in the same $W_T$-orbit, where
  $W_T$ is the Weyl group of $T$. Two elements in $T(\R)$ are
  $G(\C)$-conjugate if and only if they are in the same
  $W_{T_\C}$-orbit, where $W_{T_\C}$ is the Weyl group of
  ${T_\C}$. From our compactness assumption of $G(\R)$, one 
  has $W_T \simeq W_{T_\C}$. The statement then follows from 
  the injectivity 
  of the map $T(\R)/W_T\hookrightarrow T(\C)/W_{T_\C}$. 

  We reduce to the above special case. 
  Let $Z$ be the connected center of $G$. Let $S\subset
  Z$ (resp. $T\subset Z$) be the maximal split (resp. anisotropic)
  torus of $Z$. Put $M:=G_{\rm der}\cdot T\cdot S[2]$, where $S[2]$ is the
  $2$-torsion subgroup of $S$. One has $G=SM=S M^0$
  and the subgroup $M(\R)$ meets every component of $G(\R)$. We also have
  $G(\R)=M(\R)\times S(\R)^0$; write $x=(x_M,x_S)$ into the
  $M$-component and $S$-component of $x$. Then two elements $x$ and $y$ are
  $G(\C)$-conjugate if and only if $x_S=y_S$, and  $x_M$ and $y_M$ are
  $M(\C)$-conjugate. The condition that $x_M$ and $y_M$ 
  are $M(\C)$-conjugate
  implies that they are in the same connected component of
  $M(\R)$. Multiplying them by a suitable element in $S[2]$, 
  we may assume that $x_M$
  and $y_M$ are in the connected component $M^0(\R)$. 
  Since $M^0$ is connected and anisotropic,
  we are done. \qed   
\end{proof}

\begin{lemma}\label{96}
  We have $\Delta_f(\pi)=\Delta(\pi).$
\end{lemma}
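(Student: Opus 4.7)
One inclusion is immediate: if $\gamma\in G(\Q)$ is conjugate to $\pi$ in $G(\A)$, say $\gamma=x^{-1}\pi x$ with $x=(x_\infty,x_f)\in G(\R)\times G(\A_f)$, then reading the equality component by component (the diagonal embedding sends $\pi$ and $\gamma$ to constant tuples) shows in particular that $\gamma=x_f^{-1}\pi x_f$ in $G(\A_f)$. Hence $\Delta(\pi)\subseteq\Delta_f(\pi)$, and all the work will be in the reverse inclusion.

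For $\Delta_f(\pi)\subseteq\Delta(\pi)$, I must show that whenever $\gamma\in G(\Q)$ satisfies $\gamma=x_f^{-1}\pi x_f$ for some $x_f\in G(\A_f)$, there also exists $x_\infty\in G(\R)$ with $\gamma=x_\infty^{-1}\pi x_\infty$; the pair $(x_\infty,x_f)\in G(\A)$ will then realize $\gamma\in G(\A)\cdot\pi$. Since $\gamma$ and $\pi$ are in particular conjugate in $G(\Q_\ell)$ for every finite $\ell$, they take the same value on every $G$-invariant polynomial function on $G$ (characteristic polynomial under any faithful representation, Chevalley class functions, and so on). Being semisimple elements in a reductive group with equal invariants, $\gamma$ and $\pi$ are therefore conjugate over $\overline{\Q}$, and a fortiori over $\C$.

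The crux of the argument will be to descend this $G(\C)$-conjugacy to $G(\R)$-conjugacy, and here I will use the compactness hypothesis from Proposition~\ref{91}. Condition~(5) there says that $G(\R)_1$ is a maximal compact subgroup of $G(\R)$, equivalently that $G$ is $\R$-anisotropic modulo its maximal $\Q$-split central torus. For such a real reductive group, semisimple $G(\C)$-conjugacy already implies $G(\R)$-conjugacy: after quotienting by the split central torus one is reduced to a compact connected Lie group, in which two elements are conjugate if and only if they lie in a common Weyl orbit on a maximal torus, and that orbit is in turn detected by the characteristic polynomial. For the specific groups arising in the previous sections (quaternion unitary similitudes of rank $n$ over $B_{p,\infty}$), $B_{p,\infty}\otimes\R$ is Hamilton's quaternion algebra $\mathbb{H}$, and the same conclusion follows at once from the Skolem--Noether theorem applied to $M_n(\mathbb{H})$. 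This descent step is where the anisotropy hypothesis on $G_{\R}$ is essential and where I expect the main technical weight of the lemma to lie; without it, the equality $\Delta_f(\pi)=\Delta(\pi)$ can genuinely fail.
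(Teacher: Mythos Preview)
Your proof is correct and follows essentially the same route as the paper: the trivial inclusion $\Delta(\pi)\subseteq\Delta_f(\pi)$, then passing from $G(\Q_\ell)$-conjugacy to $G(\overline{\Q})$- and $G(\C)$-conjugacy via invariants, and finally descending to $G(\R)$ using the anisotropy of $G_{\rm der}$ over $\R$. The paper isolates the real descent as a separate Lemma~\ref{97} and is slightly more careful there about connected components (writing $G(\R)=M(\R)\times S(\R)^0$ with $M^0$ anisotropic), whereas you assume connectedness after quotienting by the split center; but your Skolem--Noether remark covers the specific groups at hand, and the overall strategy is the same.
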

\begin{proof}
We have the inclusion $\Delta(\pi)\subset\Delta_f(\pi)$, 
and need to show that if $\gamma G(\Q)\cap G(\A_f)\cdot \pi$, then
$\gamma G(\Q)\cap G(\A)\cdot \pi$, that is, $\gamma$ and $\pi$ are
$G(\R)$-conjugate. 
Since the set of $G(\bar K)$-conjugacy classes is independent of the
algebraically closed field $\bar K$ of \ch zero, one immediately sees
that $\gamma$ and $\pi$ are $G(\C)$-conjugate, and the lemma follows
from Lemma~\ref{97}. \qed 
\end{proof}

\begin{lemma}\label{98}
  There is a natural bijection
  \begin{equation}
    \label{eq:910}
    \Delta(\pi)\simeq \ker \left [ \ker^1(\Q,G_\pi)\to H^1(\Q, G)
    \right ],  
  \end{equation}
where $\ker^1(\Q,G_\pi)$ is the kernel of the local-global map 
\[ H^1(\Q, G_\pi)\to \prod_v  H^1(\Q_v, G_\pi) \]
of pointed sets.
\end{lemma}
\begin{proof}
  This is a special case of a well-known ingredient in the
  stabilization of the trace formula; see
  \cite[(9.6.2)]{kottwitz:stf86}. The proof is elementary and
  omitted. \qed
\end{proof}

\subsection{Application to the superspecial locus}
\label{sec:96}
Let $G$, $\pi_0\in G(\Q)$, $U_N$  and $R_N(\pi_0)=R(\pi_0)$ be those in
Section~\ref{sec:07},
also see Theorem~\ref{75}.  
In this subsection,
we apply results of previous subsections for computing the trace $\tr
R(\pi_0)$. 
Note that this computes the number of $\Fp$-rational points of
$\Lambda^*_{g,N}$ (see Theorem~\ref{75} (3) and Remark~\ref{911}). 
The centralizer 
$G_{\pi_0}$ is isomorphic to the 
group of unitary similitudes of a Hermitian
space $V$ over the imaginary quadratic field 
$\Q(\pi_0)=\Q(\sqrt{-p})$.
This group $G_{\pi_0}$ satisfies the Hasse
principle; see a proof below. 


\begin{lemma}\label{99} Let $E$ be an imaginary quadratic field and
  $G=GU(V,\psi)$ be the  group of unitary similitudes of a Hermitian
  space $V$ over $E$. Then $G$ satisfies the Hasse principle. 
\end{lemma}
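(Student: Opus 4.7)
The plan is to exploit the similitude sequence
\[
1 \to U \to G \xrightarrow{\nu} \Gm \to 1
\]
of algebraic groups over $\Q$, where $U = U(V,\psi)$ is the unitary group and $\nu$ the multiplier. Taking Galois cohomology globally and locally and using Hilbert~90 ($H^1(k,\Gm)=0$ for every field $k$), I obtain the commutative diagram with exact rows
\[
\begin{CD}
\Q^\times @>{\delta}>> H^1(\Q,U) @>>> H^1(\Q,G) @>>> 1 \\
@VVV @VV{\alpha}V @VV{\beta}V \\
\prod_v \Q_v^\times @>>> \prod_v H^1(\Q_v,U) @>>> \prod_v H^1(\Q_v,G) @>>> 1.
\end{CD}
\]
The Hasse principle for $G$ is the assertion $\ker\beta = 1$.

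The injectivity of $\alpha$ is Landherr's classical theorem on Hermitian forms over an imaginary quadratic extension: $H^1(\Q,U)$ classifies Hermitian spaces over $E$ of the given rank, and two such spaces are isomorphic iff they are isomorphic at every place of $\Q$. With this in hand I would chase the diagram: given $\xi \in \ker\beta$, choose a lift $\tilde\xi \in H^1(\Q,U)$; at each place $v$ the local class $\alpha(\tilde\xi)_v$ lies in the image of $\delta_v$, so $\alpha(\tilde\xi)_v = \delta_v(a_v)$ for some $a_v \in \Q_v^\times$. By the injectivity of $\alpha$, it suffices to produce $a \in \Q^\times$ with $a\,a_v^{-1} \in \nu(G(\Q_v))$ for every $v$, for then $\delta(a) = \tilde\xi$ and hence $\xi = 0$.

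The main obstacle is producing this global $a$, and this is where the imaginary quadratic hypothesis enters essentially. I would analyze $\nu(G(\Q_v)) \subset \Q_v^\times$ directly: scalar multiplication by $c \in E_v^\times$ provides similitudes of multiplier $N_{E_v/\Q_v}(c)$, and a Witt decomposition argument on $(V \otimes \Q_v,\psi)$ shows that either $\nu(G(\Q_v)) = \Q_v^\times$ (e.g.\ at split places, or when $\dim_E V$ is even so that one can absorb arbitrary scalars using hyperbolic planes) or $\nu(G(\Q_v)) = N_{E_v/\Q_v}(E_v^\times)$, an index-two subgroup. The task of assembling the $a_v$ into a global $a$ is then equivalent to a Hasse norm statement for $E/\Q$, which is classical: the Hasse norm principle for a quadratic extension gives the triviality of $\ker^1(\Q, R^1_{E/\Q}\Gm)$, and this is exactly the vanishing needed. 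Combined with weak approximation for $\Gm$ at the finite set of remaining (non-split) places, one produces the desired $a \in \Q^\times$, and the diagram chase concludes $\xi = 0$.
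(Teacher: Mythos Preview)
Your approach via the multiplier sequence $1 \to U \to G \xrightarrow{\nu} \Gm \to 1$ and Landherr's theorem is genuinely different from the paper's. The paper instead uses $1 \to G_{\rm der} \to G \xrightarrow{\det} R_{E/\Q}\Gm \to 1$: since $G_{\rm der}=SU(V,\psi)$ is simply connected, Kneser's theorem gives $H^1(\Q_v,G_{\rm der})=1$ at every finite $v$, hence $H^1(\Q_v,G)=1$ there; at $\infty$ one checks $\det:G(\R)\to\C^\times$ is surjective and then invokes the Hasse principle for simply connected groups (bijectivity of $H^1(\Q,G_{\rm der})\to H^1(\R,G_{\rm der})$). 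Your route trades these structural facts for the classification of Hermitian forms plus an explicit analysis of the local multiplier images; both are legitimate.

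There is, however, a gap in your final step. You need $a\in\Q^\times$ with $a\,a_v^{-1}\in\nu(G(\Q_v))$ for \emph{every} place $v$. Weak approximation for $\Gm$ handles only finitely many places, and the Hasse norm principle $\ker^1(\Q,R^1_{E/\Q}\Gm)=0$ is an \emph{injectivity} statement, not the lifting statement you need. When $n=\dim_E V$ is odd, your own analysis gives $\nu(G(\Q_v))=N_{E_v/\Q_v}(E_v^\times)$ at \emph{every} non-split place, so the constraints on $a$ are infinite in number and cannot be met by approximation alone. The repair is to use that $\tilde\xi$ is global: the Hermitian space $V'$ it represents has a global discriminant, and for $n$ odd one has $a_v\equiv \disc(V')_v\cdot\disc(V)_v^{-1}$ in $\Q_v^\times/N(E_v^\times)$, so $a:=\disc(V')/\disc(V)\in\Q^\times$ works on the nose. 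Equivalently, the tuple $(a_v\bmod N(E_v^\times))_v$ automatically satisfies the reciprocity constraint $\prod_v (a_v,E/\Q)_v=1$, which is precisely the obstruction to lifting to $\Q^\times$. With this said, your argument is complete.
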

\begin{proof} This is certainly known to the experts. 
  We indicate how this follows from a result of Kottwitz.
Consider the short exact sequence
\[ 
\begin{CD}
1@>>> G_{\rm der} @>{i}>> G @>\det>> D=E^\times @>>> 1.  
\end{CD}
 \]
By Shapiro's lemma and Hilbert's theorem 90, 
one has $H^1(\Q,D)=1$ and the torus $D$ satisfies the Hasse
principle. By \cite[p.~393]{kottwitz:jams92}, one has
$\ker^1(\Q,G)=\ker^1(\Q,D)$. Therefore, $G$ also satisfies the Hasse
principle.  \qed
\end{proof}

It follows from Lemmas~\ref{96}, \ref{98} and \ref{99} 
that $|\Delta_f(\pi_0)|=|\Delta(\pi_0)|=1$. 
By Proposition~\ref{94}, we have proven the following result. 

\begin{thm}\label{910}
  Let $G$, $\pi_0\in G(\Q)$, $U_N$  and $R_N(\pi_0)$ be as in
  Section~\ref{sec:07}. There exists a positive
  integer $N_0$ such that for all positive integers $N$ divisible by
  $N_0$, we have {\rm ({\it see} (\ref{eq:91}))}
  \begin{equation}
    \label{eq:911}
     \tr R_N(\pi_0)=a(G_{\pi_0})\, O_{\pi_0}(\varphi_{\pi_0,N}),
  \end{equation}
  where the Haar measure on $G_{\pi_0}(\A_f)$ is defined by {\rm
  (\ref{eq:96})}. 
\end{thm}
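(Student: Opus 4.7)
The plan is to reduce the theorem to a direct application of Theorem~\ref{94} combined with the vanishing $|\Delta_f(\pi_0)|=1$, which in turn follows from the Hasse principle for $G_{\pi_0}$. Since $\Delta_f(\pi_0)$ always contains the class of $\pi_0$ itself, proving the equality $|\Delta_f(\pi_0)|=1$ amounts to showing that no other $G(\Q)$-conjugacy class is $G(\A_f)$-conjugate to $\pi_0$.

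First I would invoke Theorem~\ref{94} to choose $N_0$ so that, for all $N$ with $N_0\mid N$, the identity
\[
\tr R(\pi_0)=|\Delta_f(\pi_0)|\cdot a(G_{\pi_0})\cdot O_{\pi_0}(\varphi_{\pi_0})
\]
holds. It remains to show $|\Delta_f(\pi_0)|=1$. By Lemma~\ref{96} one has $\Delta_f(\pi_0)=\Delta(\pi_0)$ (the $G(\Q)$-classes of elements $G(\A)$-conjugate to $\pi_0$), and Proposition~\ref{98} realizes this set as
\[
\Delta(\pi_0)\;\simeq\;\ker\!\left[\,\ker^1(\Q,G_{\pi_0})\longrightarrow H^1(\Q,G)\,\right].
\]
Thus it suffices to prove $\ker^1(\Q,G_{\pi_0})=\{1\}$, i.e.\ that $G_{\pi_0}$ satisfies the Hasse principle for $H^1$.

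The next step is to identify $G_{\pi_0}$ as a group of unitary similitudes. Since $A_0=E_0^g$ with $\pi_0^2=-p$, the subring $\Q[\pi_0]\subset \End^0(A_0\otimes\Fpbar)$ is the imaginary quadratic field $E:=\Q(\sqrt{-p})$, and the centralizer of $\pi_0$ in the semisimple $\Q$-algebra $\End^0(A_0\otimes\Fpbar)\cong M_g(B_{p,\infty})$ is a central simple $E$-algebra equipped with an involution of the second kind induced by the Rosati involution (which acts on $\pi_0$ by $\pi_0'=-\pi_0$, so restricts to complex conjugation on $E$). Consequently $G_{\pi_0}$ is the group of similitudes $GU(V,\psi)$ of a non-degenerate Hermitian space $(V,\psi)$ over $E$. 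Lemma~\ref{99} then gives the Hasse principle for this group, so $\ker^1(\Q,G_{\pi_0})=\{1\}$, and therefore $|\Delta_f(\pi_0)|=1$, which completes the proof.

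The main work in the argument is really packaged into Lemma~\ref{99} (the Hasse principle for $GU$ of a Hermitian space over an imaginary quadratic field), which is already proved via the long exact sequence attached to $1\to G_{\rm der}\to G\xrightarrow{\det}E^\times\to 1$ together with Kneser's theorem and Hilbert~90. The only point that requires a brief verification in the present proof is that the centralizer $G_{\pi_0}$ genuinely has the claimed form; this is routine given the explicit model $A_0=E_0^g$ with $\pi_{E_0}^2=-p$ and the fact that $\pi_0$ is central in $\End^0(A_0\otimes\Fpbar)\otimes_\Q E$ with involution of the second kind.
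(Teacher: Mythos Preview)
Your proof is correct and follows essentially the same route as the paper: identify $G_{\pi_0}$ with a unitary similitude group over $E=\Q(\sqrt{-p})$, invoke Lemma~\ref{99} for the Hasse principle, combine with Lemma~\ref{96} and Proposition~\ref{98} to obtain $|\Delta_f(\pi_0)|=1$, and then apply Theorem~\ref{94}. The paper's own proof is the same chain of citations, stated more tersely.
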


\begin{remark}\label{911}

(1) Theorem~\ref{910} is not yet very useful in practice as we do have any
    control for $N_0$. 
    For example, we do not know whether $N_0$ can be
    prime-to-$p$. Note that we defined the cover
    $\Lambda_{g,N}^*$ by modular interpretation only for prime-to-$p$
    level. When $p\,|N$, we still can define ${\bf \Lambda}_{g,N}^*$ 
    as a finite
    \'etale $\Fp$-scheme: let $\Lambda^*_{g,N}:=G(\Q)\backslash
    G(\A_f)/U_N$ with Galois action given by 
    \[ \sigma_p \cdot (x_\ell)_\ell=(\pi_0 x_\ell \pi_0^{-1})_\ell, \quad
    (x_\ell)_\ell \in G(\A_f). \]
    This agrees with ${\bf \Lambda}^*_{g,N}$ in Section~\ref{sec:07} 
    when $p\nmid N$ and one
    also has $\tr R_N(\pi_0)=|{\bf \Lambda}_{g,N}^*(\Fp)|$, except 
    that the geometric meaning for ${\bf \Lambda}_{g,N}^*(\Fp)$ is
    less clear. 
\end{remark}

We report some progress since the present paper was
submitted in 2012. Ibukiyama \cite{ibukiyama:type} generalized his results
with Katsura (Theorems~\ref{11} and \ref{12}) to the non-principal
genus case (cf.~Theorem~\ref{86} (2) (3) (5)
for $N=1$ without any assumption). 
Ibukiyama's proof is more arithmetic, which treats geometry
problems by quaternion Hermitian forms. As an application, 
he describes the number of components of the 
supersingular locus that are defined over $\Fp$ for all $g$ 
by the work of Li and Oort. He also proves
an explicit formula for $|{\bf \Sigma}_2(\Fp)|$, 
and ${\bf \Sigma}_2(\Fp)\neq \emptyset$ as a consequence (see
\cite{ibukiyama:quinary}). In \cite{yu:fod} the author constructs
directly a polarized abelian surface $(A_0,\lambda_0)$ over $\Fp$ in
$\Sigma_2(\Fp)$ with Frobenius endomorphism $\pi_0^2=p$, and hence we
have a
base point $(A_0'',\lambda_0'')$ over $\Fp$ with Frobenius $\pi_0''$ square
central in $G(\Q)$. 
As a result, Theorem~\ref{86} holds true (cf.~Remark~\ref{87}) 
without any assumption, except with different covers $\Sigma_{g,N}^*$ 
due to the choice of the new base point.
It is also easy to check that $G_{\pi_0''}$ satisfies the Hasse principle,
and we have analogue of Theorem~\ref{910} for the non-principal genus
case.








\section*{Acknowledgments}
  The author thanks Professors Ibukiyama and Katsura for 
  their inspiring work
  \cite{ibukiyama-katsura}, especially Ibukiyama for his interest and  
  discussions. 
  He thanks Jiu-Kang~Yu and Wen-Wei Li for helpful
  discussions. The manuscript was prepared during the
  author's stay at l'Institut des Hautes \'Etudes Scientifiques in 2010. 
  He acknowledges the institution for kind hospitality and 
  excellent working conditions. 
  The author was partially supported by the grants NSC
  100-2628-M-001-006-MY4 and AS-99-CDA-M01. The author thanks the
  referees for careful reading and helpful comments. 

\end{document}